\begin{document}

\title{An advanced meshless approach for the high-dimensional multi-term time-space-fractional PDEs {\color{blue}on convex domains}}

\titlerunning{An advanced meshless method for fractional TSFPDEs}        

\author{X. G. Zhu \and Y. F. Nie \and J. G. Wang \and Z. B. Yuan}

\authorrunning{X. G. Zhu et al.} 

\institute{  X. G. Zhu \at
               School of Science, Shaoyang University, Shaoyang, Hunan 422000, P.R. China \\
              \email{zhuxg590@yeah.net}
              \and
             Y. F. Nie \and J. G. Wang \and Z. B. Yuan  \at
               Department of Applied Mathematics, Northwestern  Polytechnical University, Xi'an, Shaanxi 710129, P. R. China \\
              \email{yfnie@nwpu.edu.cn}      
}

\date{Received: date / Accepted: date}

\maketitle

\begin{abstract}
In this article, an advanced differential quadrature (DQ) approach is proposed for
the high-dimensional multi-term time-space-fractional partial differential equations (TSFPDEs) {\color{blue}on convex domains}.
Firstly, a family of high-order difference schemes  is
introduced to discretize the time-fractional derivative and a semi-discrete scheme
for the considered problems is presented.
We strictly prove its unconditional stability and error estimate. Further,
we derive a class of DQ formulas to evaluate the fractional derivatives, which employs
radial basis functions (RBFs) as test functions. Using these DQ formulas in spatial discretization,
a fully discrete DQ scheme is then proposed. Our approach provides a flexible and
high accurate alternative to solve the high-dimensional multi-term TSFPDEs {\color{blue}on convex domains} and its actual
performance is illustrated by contrast to the other methods available in the open literature.
The numerical results {\color{red}confirm the theoretical analysis and the capability of our proposed method finally}.
\keywords{Radial basis functions \and Differential quadrature
          \and High-order difference operator \and {\color{red}Multi-term time-space-fractional partial differential equation}}
\end{abstract}

\section{Introduction}\label{intro}
\indent

{\color{red}The fractional partial differential equations (PDEs) are the results of mathematical modelling based on the fractional calculus,
which provide} a new powerful tool to study some complex systems associated with temporal memory or long-range space interaction
{\color{red}in mathematical physics.} 
Nevertheless, {\color{red}it is challenging to solve these type of equations} and few of fractional PDEs can be solved by analytic techniques.
As a result, numerical methods {\color{red}are priorities for development.}
The fractional PDEs can be divided into three categories: the time-fractional PDEs, the space-fractional PDEs, and a mixture of both.
Up to present, the numerical algorithms for the time-fractional PDEs {\color{red} have gradually moved towards maturely after years of
development} \cite{dq08,dq09,dq04,dq02,dq03,dq06,dq05,dq07}, while these for the high-dimensional space-fractional or time-space-fractional
PDEs have to be further developed and improved.
Regardless of the difficulties in constructing numerical algorithms for the space-fractional PDEs,
{\color{red}many methods have been invented to solve them, which cover} the mainstream of today's numerical methods,
such as finite difference (FD) methods
\cite{dq12,dq13,dq10,dq11}, spectral methods \cite{dq18,dq19}, finite element (FE) methods \cite{dq15,dq14,dq16,dq17},
and finite volume methods \cite{dq20,da58}. Liu et al. considered a space-fractional FitzHugh-Nagumo monodomain model by an implicit
semi-alternative direction FD scheme on approximate irregular domains \cite{dq21}. Qiu et al. developed a nodal
discontinuous Galerkin method for the two-dimensional space-fractional diffusion equations \cite{dq23}.
Yang et al. studied a FE method for the two-dimensional nonlinear space-fractional diffusion equations
on convex domains \cite{dq22}. Bhrawy and Zaky developed a Jacobi spectral tau method for the multi-term
TSFPDEs \cite{dq26}. In \cite{dq25}, an efficient numerical algorithm based on FD and FE methods was addressed
for the two-dimensional multi-term time-space-fractional Bloch-Torrey equations.
Qin et al. gave a fully discrete FE scheme {\color{red}for the multi-term time-space-fractional Bloch-Torrey equation}
by bilinear rectangular finite elements \cite{da54}.
Fan et al. proposed a fully discrete FE scheme with unstructured meshes for the two-dimensional multi-term
{\color{red}time-space-fractional \\ diffusion-wave equations} {\color{blue}on convex domains} \cite{dq24}.

Although the study of numerical methods for the fractional PDEs
is an active area of research, few works have been reported for the high-dimensional multi-term TSFPDEs {\color{blue}on convex domains}.
Actually, this topic is full of challenges because of
the non-locality and weakly singular integral kernel of fractional derivatives,
which is the biggest difficulty in the design of their numerical algorithms.
This situation motivates us to seek another efficient numerical methods to solve these equations.
Meshless methods have the ability to construct functional approximation
entirely {\color{red}from the information at a set of scattered nodes or particles,}
and thereby can reduce the computational cost in practise. They serve {\color{red}as promising tools} in dealing with
the structure destruction, high-dimensional crack propagation and large deformation problems
without {\color{red}the expensive meshes generation.}
In the past two decades, meshless methods have achieved significant advances and many meshless methods
have been developed, such as diffuse element method \cite{dq27},
reproducing kernel particle (RKP) method \cite{dq28}, hp-cloud method, element-free Galerkin method \cite{dq29},
meshless local Petrov-Galerkin method \cite{dq31}, boundary element-free method, Kansa's methods \cite{dq32,dq33},
point interpolation (PI) method \cite{dq30}, and so forth. Meshless methods offer more advantages over the mesh-dependent methods in treating the
space-fractional PDEs {\color{red}but seldom works have been reported in this field, let alone} the multi-term TSFPDEs.
Liu et al. addressed a meshless PI technique in strong form for the space-fractional diffusion equation \cite{dq35}.
Cheng et al. proposed an improved moving least-squares collocation scheme for the two-dimensional two-sided space-fractional wave
equation \cite{dq36}. In \cite{da71},  the meshless PI method based on weak form was further developed for
{\color{red}the space-fractional} advection dispersion equations.
{\color{red} The RBFs-based methods have been applied to solve fractional problems but most are limited to the time-fractional cases.
Liu et al. extended an implicit RBFs collocation method to the time-fractional diffusion equation \cite{rbf1}.
Piret and Hanert proposed a RBFs method called by RBF-QR algorithm for the one-dimensional space-fractional
diffusion equation \cite{rbf2}.  Mohebbi et al. proposed a RBFs approach
for the fractional sub-diffusion equation \cite{rbf4}. Pang et al. applied the Kansa's method to  the two-dimensional
space-fractional advection diffusion equation \cite{dq34}. Zafarghandi and  Mohammadi developed a RBFs collocation
method for the one-dimensional space-fractional advection diffusion equation \cite{rbf6}.
Moreover, in \cite{rbf7,rbf9,rbf8,rbf3}, the authors reported the works on RBFs-based approaches for the time-fractional PDEs. }

The basic principle of DQ method was suggested by Bellman and Casti in 1971 \cite{dq38}, and due to the
rapid development of computer simulation, this method has recently aroused extensive concern.
Many scholars applied different basis functions to develop different DQ methods,
such as polynomial DQ method \cite{dq39}, Hermite polynomial DQ \cite{da61}, splines-based DQ method,
orthogonal polynomial DQ method \cite{da60}, and RBFs-based DQ method \cite{dq40}.
At present, {\color{red}a certain progress was made in the new development of DQ methods.
Some works are} committed to the researches of the method itself
and {\color{red}some works} are committed to its new areas of application in structural engineering.
In fractional cases,
Pang et al. extended the classical polynomial DQ method to the steady-state space-fractional
advection diffusion equations \cite{dq37}, where the Lagrange interpolation basis functions are used as
test functions to determine weighted coefficients. In \cite{da69}, we proposed an effective DQ method for
the two-dimensional space-fractional diffusion equations by using RBFs as test functions.
Liu et al. applied the RBFs-based DQ method for the numerical simulation of two-dimensional variable-order
time-fractional advection diffusion equation \cite{da62}.

{\color{red}
Fractional diffusion equations are new numerical models built based on the
continuous time random walk (CTRW) model of statistical physics \cite{rbf23}.
Comparing with classical diffusion equations, they have incomparable advantages in modeling
the physical phenomena exhibiting anomalous diffusion, which is ubiquitous in real life.
Because of this, these models have been widely recognized and used in various applications,
such as thermonuclear reaction, turbulent flow \cite{rbf20}, biological population growth \cite{rbf22},
contaminant transport in groundwater \cite{rbf21}, semiconductor device simulation, and so forth. }
In this work, we {\color{red}propose} an efficient RBFs-based DQ method for the high-dimensional
multi-term TSFPDEs of {\color{red}diffusion equation form} {\color{blue}on convex domains}:

\begin{itemize}
  \item[(i)] two-dimensional multi-term TSFPDE:
\begin{align}\label{gs01}
\left\{
             \begin{array}{lll}\displaystyle
P_{\theta_1,\theta_2,\ldots,\theta_s}\big({^C_0}\mathcal{D}_t\big)u(x,y,t)-
    \varepsilon^+_\alpha(x,y)\frac{\partial^{\alpha}_+ u(x,y,t)}{\partial x^{\alpha}_+} \\
    \quad\displaystyle -\varepsilon^-_\alpha(x,y)\frac{\partial^{\alpha}_- u(x,y,t)}{\partial x^{\alpha}_-}
    -\varepsilon^+_\beta(x,y)\frac{\partial^{\beta}_+ u(x,y,t)}{\partial y^{\beta}_+} \\
     \quad\displaystyle  -\varepsilon^-_\beta(x,y)\frac{\partial^{\beta}_- u(x,y,t)}{\partial y^{\beta}_-} \\
     \quad\displaystyle =f(x,y,t),  \quad (x,y;t)\in\Omega\times(0,T],\\
       u(x,y,0)=u_0(x,y),\quad (x,y)\in\Omega, \\
   u(x,y,t)=g(x,y,t), \quad (x,y;t)\in\partial\Omega\times(0,T],
             \end{array}
        \right.
\end{align}
where $0<\theta_1,\theta_2,\ldots,\theta_s\leq 1$, $s\in \mathbb{Z}^+$, $1<\alpha, \beta\leq 2$, $\Omega\subset \mathbb{R}^2$ with
$\partial\Omega$ being its boundary, {\color{red} and
$\varepsilon^\pm_{\varsigma}(x,y)$ are} the diffusion coefficients with $\varsigma=\alpha, \beta$.
\item[(ii)] three-dimensional multi-term TSFPDE:
\begin{align}\label{gs02}
\small
\left\{
             \begin{array}{lll}\displaystyle
P_{\theta_1,\theta_2,\ldots,\theta_s}\big({^C_0}\mathcal{D}_t\big)u(x,y,z,t)-
    \varepsilon^+_\alpha(x,y,z)\frac{\partial^{\alpha}_+ u(x,y,z,t)}{\partial x^{\alpha}_+} \\
   \quad\displaystyle -\varepsilon^-_\alpha(x,y,z)\frac{\partial^{\alpha}_- u(x,y,z,t)}{\partial x^{\alpha}_-}
    -\varepsilon^+_\beta(x,y,z)\frac{\partial^{\beta}_+ u(x,y,z,t)}{\partial y^{\beta}_+}\\
   \quad\displaystyle-\varepsilon^-_\beta(x,y,z)\frac{\partial^{\beta}_- u(x,y,z,t)}{\partial y^{\beta}_-} -
    \varepsilon^+_\gamma(x,y,z)\frac{\partial^{\gamma}_+ u(x,y,z,t)}{\partial z^{\gamma}_+} \\
    \quad\displaystyle-\varepsilon^-_\gamma(x,y,z)\frac{\partial^{\gamma}_- u(x,y,z,t)}{\partial z^{\gamma}_-} \\
     \quad\displaystyle  =f(x,y,z,t),  \quad (x,y,z;t)\in\Omega\times(0,T],\\
       u(x,y,z,0)=u_0(x,y,z),\quad (x,y,z)\in\Omega, \\
   u(x,y,z,t)=g(x,y,z,t), \quad (x,y,z;t)\in\partial\Omega\times(0,T],
             \end{array}
        \right.
\end{align}
where $0<\theta_1,\theta_2,\ldots,\theta_s\leq 1$, $s\in \mathbb{Z}^+$, $1<\alpha, \beta, \gamma\leq 2$,
$\Omega\subset \mathbb{R}^3$ with $\partial\Omega$ being its boundary, {\color{red}and
$\varepsilon^\pm_{\varsigma}(x,y,z)$ are} the diffusion coefficients with $\varsigma=\alpha, \beta, \gamma$.
\end{itemize}

Assuming $C_1:x=x_L(y,z)$, $C_2:x=x_R(y,z)$ are the forward and backward boundaries,
$C_3:y=y_L(x,z)$, $C_4:y=y_R(x,z)$ are the left and right boundaries, and
$C_5:z=z_L(x,y)$, $C_6:z=z_R(x,y)$ are the lower and upper boundaries of $\Omega\subset\mathbb{R}^3$,
in which
\begin{align*}
  x_L(y,z)&=\min\{x: (x,\eta,\zeta),\eta=y,\zeta=z\},\\
  x_R(y,z)&=\max\{x: (x,\eta,\zeta),\eta=y,\zeta=z\},\\
  y_L(x,z)&=\min\{y: (\eta,y,\zeta),\eta=x,\zeta=z\},\\
  y_R(x,z)&=\max\{y: (\eta,y,\zeta),\eta=x,\zeta=z\},\\
  z_L(x,y)&=\min\{z: (\eta,\zeta,z),\eta=x,\zeta=y\},\\
  z_R(x,y)&=\max\{z: (\eta,\zeta,z),\eta=x,\zeta=y\},
\end{align*}
the space-fractional derivatives 
are defined as follows:
\begin{small}
\begin{align*}
  \frac{\partial^{\alpha}_+ u(x,y,z,t)}{\partial x^{\alpha}_+}&=\frac{1}{\Gamma(2-\alpha)}
    \int^x_{x_L(y,z)}\frac{\partial^2 u(\xi,y,z,t)}{\partial \xi^2}\frac{d\xi}{(x-\xi)^{\alpha-1}},\\
  \frac{\partial^{\alpha}_- u(x,y,z,t)}{\partial x^{\alpha}_-}&=\frac{1}{\Gamma(2-\alpha)}
    \int^{x_R(y,z)}_x\frac{\partial^2 u(\xi,y,z,t)}{\partial \xi^2}\frac{d\xi}{(\xi-x)^{\alpha-1}},\\
   \frac{\partial^{\beta}_+ u(x,y,z,t)}{\partial y^{\beta}_+}&=\frac{1}{\Gamma(2-\beta)}
    \int^y_{y_L(x,z)}\frac{\partial^2 u(x,\xi,z,t)}{\partial \xi^2}\frac{d\xi}{(y-\xi)^{\beta-1}},\\
  \frac{\partial^{\beta}_- u(x,y,z,t)}{\partial y^{\beta}_-}&=\frac{1}{\Gamma(2-\beta)}
    \int^{y_R(x,z)}_y\frac{\partial^2 u(x,\xi,z,t)}{\partial \xi^2}\frac{d\xi}{(\xi-y)^{\beta-1}},\\
   \frac{\partial^{\gamma}_+ u(x,y,z,t)}{\partial z^{\gamma}_+}&=\frac{1}{\Gamma(2-\gamma)}
    \int^z_{z_L(x,y)}\frac{\partial^2 u(x,y,\xi,t)}{\partial \xi^2}\frac{d\xi}{(z-\xi)^{\gamma-1}},\\
  \frac{\partial^{\gamma}_- u(x,y,z,t)}{\partial z^{\gamma}_-}&=\frac{1}{\Gamma(2-\gamma)}
    \int^{z_R(x,y)}_z\frac{\partial^2 u(x,y,\xi,t)}{\partial \xi^2}\frac{d\xi}{(\xi-z)^{\gamma-1}},
\end{align*}
\end{small}
with the Gamma function $\Gamma(\cdot)$.
$P_{\theta_1,\theta_2,\ldots,\theta_s}\big({^C_0}\mathcal{D}_t\big)$ denotes the multi-term fractional derivative operator:
\begin{align}\label{gs03}
\begin{aligned}
P_{\theta_1,\theta_2,\ldots,\theta_s}&\big({^C_0}\mathcal{D}_t\big)u(x,y,z,t)=\sum_{r=1}^{s}a_r{^C_0}D_t^{\theta_r}u(x,y,z,t)\\
   &=a_1{^C_0}D_t^{\theta_1}u(x,y,z,t)+a_2{^C_0}D_t^{\theta_2}u(x,y,z,t) \\
   &\quad+\cdots+a_s{^C_0}D_t^{\theta_s}u(x,y,z,t),
\end{aligned}
\end{align}
where
\begin{align*}
\small
  {^C_0}D_t^{\theta_r}u(x,y,z,t)=\left\{
\begin{aligned}
      &\frac{\partial u(x,y,z,t)}{\partial t}, \quad  \theta_r=1,\\
      &\frac{1}{\Gamma(1-\theta_r)}\int^t_0\frac{\partial u(x,y,z,\xi)}{\partial \xi}\frac{d\xi}{(t-\xi)^{\theta_r}}, \\
      & \qquad\qquad\qquad\qquad\qquad\quad  0<\theta_r<1,
\end{aligned}
\right.
\end{align*}
with $a_1\in \mathbb{R}^+$ and $a_r\geq0$, $r=2,3,\ldots,s$.

{\color{red}As compared to the existing meshless methods, the proposed method
is not only simple, flexible to perform, but also can use a few nodes to achieve high accuracy, because
the fractional derivatives are discretized by the weighted linear sums of the functional values at scattered nodes.}
The layout is as follows.
In Section \ref{s2}, some preliminaries on the fractional calculus and RBFs are introduced.
In Section \ref{sx},  a family of high-order schemes of the time-fractional derivative is
presented and a semi-discrete scheme is obtained.
In Section \ref{sk}, the unconditional stability and convergence are discussed.
In Section \ref{s4}, the DQ formulas for fractional derivatives are presented based on RBFs,
and with these RBFs-based DQ formulas, we further construct a fully discrete DQ scheme for the
multi-term TSFPDEs {\color{blue}on convex domains}.
In Section \ref{s5}, some illustrative examples are carried out to confirm its validity and convergence.
A concluding remark is given in Section \ref{s6}.

\section{Preliminaries} \label{s2}
\indent

{\color{red}We recall some} basic preliminaries on the fractional calculus and RBFs required for further discussions.
Letting $\Omega\subset \mathbb{R}^d$, $d=2,3$, we define
\begin{equation*}
  (u,v)=\int_\Omega uvd\Omega, \quad||u||_{0,\Omega}=\sqrt{(u,u)}.
\end{equation*}
\subsection{Fractional calculus}
\begin{definition}\label{def01}
 The left and right Riemann-Liouville fractional integrals of order $\alpha$ are defined by
 \begin{align*}
  {_a}J^{\alpha}_{x}u(x)=\frac{1}{\Gamma(\alpha)}\int^x_{a}\frac{u(\xi)d\xi}{(x-\xi)^{1-\alpha}},\quad x>a,\\
  {_x}J^{\alpha}_{b}u(x)=\frac{1}{\Gamma(\alpha)}\int^{b}_x\frac{u(\xi)d\xi}{(\xi-x)^{1-\alpha}},\quad x<b,
 \end{align*}
 and if $\alpha=0$, ${_a}J^{\alpha}_{x}u(x)=u(x)$ and ${_x}J^{\alpha}_{b}u(x)=u(x)$.
\end{definition}

\begin{definition}\label{def02}
 The left and right Riemann-Liouville fractional derivatives of order $\alpha$ are defined by
 \begin{align*}
  {^{RL}_a}D_x^{\alpha}u(x)&=\bigg(\frac{d}{dx}\bigg)^m{_a}J^{m-\alpha}_{x}u(x)\\
      &=\frac{1}{\Gamma(m-\alpha)}
  \frac{d^m}{d x^m}\int^x_{a}\frac{u(\xi)d\xi}{(x-\xi)^{\alpha-m+1}},\quad x>a,\\
  {^{RL}_x}D_b^{\alpha}u(x)&=(-1)^m\bigg(\frac{d}{dx}\bigg)^m{_x}J^{m-\alpha}_{b}u(x)\\
      &=\frac{(-1)^m}{\Gamma(m-\alpha)}
  \frac{d^m}{d x^m}\int^b_{x}\frac{u(\xi)d\xi}{(\xi-x)^{\alpha-m+1}},\quad x<b,
 \end{align*}
where $m-1<\alpha<m$, $m\in \mathbb{Z}^+$, and if $\alpha=m$, ${^{RL}_a}D_x^{\alpha}u(x)=u^{(m)}(x)$
and ${^{RL}_x}D_b^{\alpha}u(x)=(-1)^{m}u^{(m)}(x)$.
\end{definition}

\begin{definition}\label{def03}
 The left and right Caputo fractional derivatives of order $\alpha$ are defined by
 \begin{align*}
  {^C_a}D_x^{\alpha}u(x)&={_a}J^{m-\alpha}_{x}u^{(m)}(x)\\
          &=\frac{1}{\Gamma(m-\alpha)}
  \int^x_{a}\frac{u^{(m)}(\xi)d\xi}{(x-\xi)^{\alpha-m+1}},\quad x>a,\\
  {^C_x}D_b^{\alpha}u(x)&=(-1)^m{_x}J^{m-\alpha}_{b}u^{(m)}(x)\\
          &=\frac{(-1)^m}{\Gamma(m-\alpha)}
  \int^b_{x}\frac{u^{(m)}(\xi)d\xi}{(\xi-x)^{\alpha-m+1}},\quad x<b,
 \end{align*}
where $m-1<\alpha<m$, $m\in \mathbb{Z}^+$, and if $\alpha=m$, ${^C_a}D_x^{\alpha}u(x)=u^{(m)}(x)$
and ${^C_x}D_b^{\alpha}u(x)=(-1)^{m}u^{(m)}(x)$.
\end{definition}

{\color{blue}Define the norms
\begin{align*}
&||u||_{J^\alpha_L(\Omega)}=\big( ||u||^2_{0,\Omega}
    +||{^{RL}_a}D_x^{\alpha}u||^2_{0,\Omega} \big)^{1/2},\\
&||u||_{J^\alpha_R(\Omega)}=\big( ||u||^2_{0,\Omega}
    +||{^{RL}_x}D_b^{\alpha}u||^2_{0,\Omega} \big)^{1/2},
\end{align*}
and let $J^{\alpha}_{L,0}(\Omega)$, $J^{\alpha}_{R,0}(\Omega)$ 
be the closures of $C^\infty_0(\Omega)$ with respect to $||\cdot||_{J^\alpha_L(\Omega)}$
and $||\cdot||_{J^\alpha_R(\Omega)}$. 
By \cite{dq14}, for $\alpha\in \mathbb{R}^+$, $\alpha\neq n-1/2$, $n\in \mathbb{Z}^+$,
the $J^{\alpha}_{L,0}(\Omega)$, $J^{\alpha}_{R,0}(\Omega)$ spaces  
are equal with equivalent norms.}

\begin{lemma}\cite{dq14}\label{xl01}
For $\alpha\in \mathbb{R}^+$ {\color{blue}, $u\in J^{\alpha}_{L,0}(\Omega)$
and $J^{\alpha}_{R,0}(\Omega)$, there exists}
\begin{align*}
( {^{RL}_a}D_x^{\alpha}u,{^{RL}_x}D_b^{\alpha}u)=\cos(\alpha\pi)||{^{RL}_a}D_x^{\alpha}u||^2_{0,\Omega}.
\end{align*}
\end{lemma}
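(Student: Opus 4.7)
The plan is to reduce the lemma to a classical Plancherel-type computation on $\mathbb{R}$, following the standard Ervin--Roop strategy. First I would invoke density: by definition, $J^{\alpha}_{L,0}(\Omega)$ and $J^{\alpha}_{R,0}(\Omega)$ are closures of $C^{\infty}_0(\Omega)$, so it suffices to prove the identity for $u\in C^{\infty}_0(\Omega)$ and then pass to the limit using continuity of the bilinear form $({^{RL}_a}D_x^{\alpha}u,{^{RL}_x}D_b^{\alpha}u)$ (which is controlled by the product of the two norms) together with the noted equivalence of $J^{\alpha}_{L,0}(\Omega)$ and $J^{\alpha}_{R,0}(\Omega)$. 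For such a test function I would extend $u$ by zero to all of $\mathbb{R}$; since $\mathrm{supp}(u)\Subset\Omega$, the left Riemann--Liouville derivative on $(a,b)$ and the derivative over $(-\infty,x)$ agree, and similarly on the right, so the inner products on $\Omega$ and $\mathbb{R}$ coincide.

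Next I would compute the Fourier symbols. The standard identities give
\begin{equation*}
\widehat{{^{RL}_{-\infty}}D_x^{\alpha}u}(\omega)=(i\omega)^{\alpha}\hat{u}(\omega),\qquad \widehat{{^{RL}_x}D_{\infty}^{\alpha}u}(\omega)=(-i\omega)^{\alpha}\hat{u}(\omega),
\end{equation*}
using the principal branch of $z^{\alpha}$. Writing $i\omega=|\omega|e^{i\pi\mathrm{sgn}(\omega)/2}$ and $-i\omega=|\omega|e^{-i\pi\mathrm{sgn}(\omega)/2}$, one obtains $(i\omega)^{\alpha}\,\overline{(-i\omega)^{\alpha}}=|\omega|^{2\alpha}e^{i\alpha\pi\mathrm{sgn}(\omega)}$. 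By Plancherel,
\begin{equation*}
({^{RL}_a}D_x^{\alpha}u,{^{RL}_x}D_b^{\alpha}u)=\int_{\mathbb{R}}|\omega|^{2\alpha}e^{i\alpha\pi\mathrm{sgn}(\omega)}|\hat{u}(\omega)|^{2}\,d\omega,
\end{equation*}
and since $u$ is real-valued, $|\hat{u}(\omega)|^{2}$ is even in $\omega$. The odd imaginary part therefore drops out after integration, leaving $\cos(\alpha\pi)\int_{\mathbb{R}}|\omega|^{2\alpha}|\hat{u}(\omega)|^{2}d\omega$, which by Plancherel again equals $\cos(\alpha\pi)\|{^{RL}_{-\infty}}D_x^{\alpha}u\|^{2}_{0,\mathbb{R}}=\cos(\alpha\pi)\|{^{RL}_a}D_x^{\alpha}u\|^{2}_{0,\Omega}$.

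The main obstacle, beyond the bookkeeping, is justifying the Fourier symbol identities for arbitrary $\alpha\in\mathbb{R}^{+}$, not just $\alpha\in(0,1)$. For $\alpha\in(m-1,m)$ one writes ${^{RL}_{-\infty}}D_x^{\alpha}u=D^{m}\big({_{-\infty}}J^{m-\alpha}_{x}u\big)$ and combines the symbol $(i\omega)^{m}$ of $D^{m}$ with the symbol $(i\omega)^{-(m-\alpha)}$ of the fractional integral, carefully verifying that $(i\omega)^{\alpha}=(i\omega)^{m}(i\omega)^{\alpha-m}$ holds under the chosen branch. Once this identity is in hand, together with the fact that the Schwartz extension by zero of $u\in C^{\infty}_{0}(\Omega)$ lies in $L^{2}(\mathbb{R})\cap L^{1}(\mathbb{R})$ with a sufficiently smooth Fourier transform, the integrals above converge absolutely and the manipulation is rigorous. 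The density step then transfers the identity to all of $J^{\alpha}_{L,0}(\Omega)=J^{\alpha}_{R,0}(\Omega)$.
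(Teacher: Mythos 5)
Your argument is correct and is essentially the standard Ervin--Roop proof that the paper simply cites for this lemma: extend $u\in C^\infty_0(\Omega)$ by zero, pass to Fourier symbols $(i\omega)^\alpha$ and $(-i\omega)^\alpha$, use evenness of $|\hat u|^2$ for real $u$ to kill the odd part of $e^{i\alpha\pi\,\mathrm{sgn}(\omega)}$, and conclude by density. No substantive deviation from the source's approach, and your flagged care about the branch of $z^\alpha$ for $\alpha>1$ is exactly the right point to check.
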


\begin{lemma}\cite{dq14} \label{xl02}
For $0<\alpha<1$, {\color{blue}$u,\ v\in J^{2\alpha}_{L,0}(\Omega)$ and $J^{2\alpha}_{R,0}(\Omega)$,} there exist
\begin{align*}
( {^{RL}_a}D_x^{2\alpha}u,v)&=( {^{RL}_a}D_x^{\alpha}u,{^{RL}_x}D_b^{\alpha}v),\\
\quad ( {^{RL}_x}D_b^{2\alpha}u,v)&=( {^{RL}_x}D_b^{\alpha}u,{^{RL}_a}D_x^{\alpha}v).
\end{align*}
\end{lemma}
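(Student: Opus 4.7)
The plan is to establish the identities first on the dense subspace $C^\infty_0(\Omega)$ and then extend to the full spaces $J^{2\alpha}_{L,0}(\Omega) = J^{2\alpha}_{R,0}(\Omega)$ by the density definition of these spaces and the continuity of the Riemann--Liouville operators in the corresponding norms. So I would fix $u, v \in C^\infty_0(\Omega)$ and reduce the identity $({^{RL}_a}D_x^{2\alpha}u, v) = ({^{RL}_a}D_x^{\alpha}u, {^{RL}_x}D_b^{\alpha}v)$ to two ingredients: a semigroup property that factors ${^{RL}_a}D_x^{2\alpha}$ into two applications of ${^{RL}_a}D_x^{\alpha}$, and a fractional adjoint (integration-by-parts) formula of the type $({^{RL}_a}D_x^{\alpha} w, v) = (w, {^{RL}_x}D_b^{\alpha} v)$.

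For the semigroup step I would invoke, for $0<\alpha<1$ and smooth compactly supported $u$, the identity ${^{RL}_a}D_x^{2\alpha}u = {^{RL}_a}D_x^{\alpha}\bigl({^{RL}_a}D_x^{\alpha}u\bigr)$, which follows from differentiating the composition of Riemann--Liouville integrals via the Beta-function identity $\int_{\xi}^{x}(x-t)^{-\alpha}(t-\xi)^{\alpha-1}\,dt = \Gamma(\alpha)\Gamma(1-\alpha)$ together with the fact that the compact support of $u$ in $\Omega$ kills the boundary terms at $x=a$ that would otherwise obstruct the factorization. For the adjoint step, with $w = {^{RL}_a}D_x^{\alpha}u$, I would apply Fubini's theorem to the double integral
\begin{equation*}
\int_\Omega v(x)\,\frac{d}{dx}\!\int_a^x \frac{w(\xi)}{\Gamma(1-\alpha)(x-\xi)^{\alpha}}\,d\xi\,dx,
\end{equation*}
integrate by parts in $x$ (no boundary contribution thanks to $v \in C^\infty_0$), and swap the order of integration to recognize ${^{RL}_x}D_b^{\alpha} v$. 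Chaining the two steps gives the first identity; the second follows symmetrically by interchanging the roles of the left and right derivatives.

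The final step is the density argument. Since $u, v$ belong to $J^{2\alpha}_{L,0}(\Omega) = J^{2\alpha}_{R,0}(\Omega)$ with equivalent norms (the equality recalled just above the lemma), I approximate them by sequences $u_n, v_n \in C^\infty_0(\Omega)$ converging in both $\|\cdot\|_{J^{2\alpha}_L}$ and $\|\cdot\|_{J^{2\alpha}_R}$, apply the identity to each pair, and pass to the limit using Cauchy--Schwarz together with the continuity of ${^{RL}_a}D_x^{\alpha}$ and ${^{RL}_x}D_b^{\alpha}$ from these spaces into $L^2(\Omega)$ (here I also use that for $0<\alpha<1$ the $\alpha$-order derivatives are controlled by interpolation between $L^2$ and the $2\alpha$-order derivatives).

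The main obstacle will be the semigroup factorization: unlike integer-order derivatives, ${^{RL}_a}D_x^{\alpha}\circ{^{RL}_a}D_x^{\alpha}$ generally differs from ${^{RL}_a}D_x^{2\alpha}$ by boundary terms involving $u(a)$ and its fractional integrals at $a$, and one must carefully verify that the compact-support assumption (or, after density, the fact that the traces implicit in $J^{2\alpha}_{L,0}$ vanish) eliminates these terms. Once that is cleanly justified, the rest reduces to bookkeeping via Fubini and the Riemann--Liouville duality.
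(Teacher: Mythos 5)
The paper offers no proof of this lemma --- it is quoted verbatim from \cite{dq14} --- so the only question is whether your reconstruction is sound, and it is. Your route (semigroup factorization ${^{RL}_a}D_x^{2\alpha}={^{RL}_a}D_x^{\alpha}\circ{^{RL}_a}D_x^{\alpha}$ on $C^\infty_0$, then the Fubini/integration-by-parts adjoint identity $({^{RL}_a}D_x^{\alpha}w,v)=(w,{^{RL}_x}D_b^{\alpha}v)$, then density) is a correct real-variable argument; note that the cited source actually proves the statement differently, by extending by zero to all of $\mathbb{R}$, computing the Fourier symbols $(i\omega)^{\alpha}$ and $(-i\omega)^{\alpha}$ of the left and right Riemann--Liouville derivatives, and applying Plancherel before restricting back to $\Omega$. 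Your approach avoids Fourier analysis at the cost of the two issues you already flag, and both are handled correctly: the adjoint step only needs $v\in C^\infty_0$ to kill the boundary terms (it does not need $w={^{RL}_a}D_x^{\alpha}u$ to be compactly supported, which it is not), and the semigroup step needs the vanishing of $u$ near $a$; you should just be explicit that when $2\alpha\ge 1$ the operator ${^{RL}_a}D_x^{2\alpha}$ carries two outer derivatives, so the composition identity requires one extra integration by parts compared with the case $2\alpha<1$. Two further small points: since the lemma hypothesizes $u,v$ in \emph{both} $J^{2\alpha}_{L,0}$ and $J^{2\alpha}_{R,0}$, you do not actually need the norm equivalence (which fails at $2\alpha=n-1/2$, e.g.\ $\alpha=1/2$) --- approximating in both norms simultaneously suffices; and in the paper's setting $\Omega\subset\mathbb{R}^d$ the derivatives are taken along coordinate lines with variable endpoints $x_L,x_R$, so your one-dimensional identity must be applied fiber-wise and integrated over the transverse variables, which is routine by Fubini on the convex domain.
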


The Caputo and Riemann-Liouville fractional derivatives are  converted to one another by the relation
\begin{align*}
  {^C_a}D^\alpha_xu(x)&={^{RL}_a}D^\alpha_xu(x)-\sum^{m-1}_{l=0}\frac{u^{(l)}(a)}{\Gamma(l+1-\alpha)}(x-a)^{l-\alpha},\\
  {^C_x}D^\alpha_bu(x)&={\color{red}{^{RL}_x}D^\alpha_bu(x)}-\sum^{m-1}_{l=0}\frac{u^{(l)}(b)}{\Gamma(l+1-\alpha)}(b-x)^{l-\alpha}.
\end{align*}
In addition, we have the properties:
\begin{align}
  &{^{C}_a}D^\alpha_x C=0,\\
  &{_a}J^{\alpha_1}_{x}{_a}J^{\alpha_2}_{x}u(x)={_a}J^{\alpha_2}_{x}{_a}J^{\alpha_1}_{x}u(x)={_a}J^{\alpha_1+\alpha_2}_{x}u(x),
\end{align}
\begin{align}\label{gs19}
\begin{aligned}
  {^{C}_a}D^\alpha_x (x-a)^\beta&={^{RL}_a}D^\alpha_x (x-a)^\beta \\
        &=\frac{\Gamma(1+\beta)}{\Gamma(1+\beta-\alpha)}(x-a)^{\beta-\alpha},
\end{aligned}
\end{align}
where $C$ is a constant and $\beta>[\alpha]+1$ with $[x]$ being the ceiling function, which outputs the smallest integer greater than or equal to $x$.
Another point need to be noticed is that these properties are also true for the right-side fractional calculus.
For more details, we refer the readers to \cite{dq41} for deeper insights.

\subsection{Radial basis functions}

\indent

{\color{red}
Letting $\textbf{x}=(x_1,x_2,\ldots,x_d)$, $\{\textbf{x}_i\}_{i=0}^{M}\in \Omega\subset \mathbb{R}^d$, }
the approximation of $u(\textbf{x})$ can be written as a weighted sum of RBFs in the form:
\begin{equation}\label{gs05}
    u(\textbf{x})\approx\sum_{i=0}^{M}\lambda_i\varphi(r_i)+\sum_{q=1}^{Q}\mu_{q}p_q(\textbf{x}), \quad \textbf{x}\in \Omega,
\end{equation}
{\color{blue}where $\{p_q(\textbf{x})\}_{q=1}^{Q}$ are the basis functions of the polynomial space of degree less than $n$ with
$Q$ being the number of $d$-variate polynomial basis, i.e.,
\begin{align*}
  Q=\binom {n+d-1} {d}, \ d=1,2,3,
\end{align*}
$\{\lambda_i\}_{i=0}^{M}$, $\{\mu_{q}\}_{q=1}^{Q}$ are unknown weights,
and $\{\varphi(r_i)\}_{i=0}^{M}$ are the RBFs with $r_i=||\textbf{x}-\textbf{x}_i||$. }
To ensure that the interpolant properly behave at infinity, the above equations are augmented by
\begin{equation}\label{gs06}
   \sum_{i=0}^{M}\lambda_ip_{q}(\textbf{x}_i)=0, \quad q=1,2,\ldots,Q.
\end{equation}

Put Eqs. (\ref{gs05})-(\ref{gs06}) in the below form
\begin{align}\label{gs07}
\left[      \begin{array}{ll}
             \textbf{A} & \textbf{B}^T\\
             \textbf{B} & \textbf{0}\\
             \end{array}
        \right]
\left(      \begin{array}{l}
             \boldsymbol{\lambda} \\
             \boldsymbol{\mu}\\
             \end{array}
        \right)
=\left(     \begin{array}{l}
             \textbf{u} \\
             \textbf{0}\\
             \end{array}
        \right),
\end{align}
where the matrix elements for $[\textbf{A}]$ are $\varphi(r_{ij})$ and for [\textbf{B}] are $p_j(\textbf{x}_i)$
with $r_{ij}=||\textbf{x}_j-\textbf{x}_i||$. $\boldsymbol{\lambda}$, $\boldsymbol{\mu}$, and $\textbf{u}$
are column vectors {\color{red}and the vector elements of them} are $\lambda_i$, $\mu_i$, and $u(\textbf{x}_i)$, respectively.

\begin{table*}[!htb]
\centering
\caption{Some commonly used RBFs.} \label{tb01}
\begin{tabular}{cl}
\hline \textbf{Name} & \textbf{RBF}\\
\hline Multiquadric (MQ)   &  $\varphi(r)=(r^2+c^2)^{1/2}, \ c>0$\\
       Inverse Multiquadric (IMQ) & $\varphi(r)=1/(r^2+c^2)^{1/2}, \ c>0$ \\
       Inverse Quadratic (IQ) & $\varphi(r)=1/(r^2+c^2), \ c>0$ \\
       Gaussian (GA) &  $\varphi(r)=e^{-r^2/c^2}, \ c>0$\\
       Polyharmonic Spline (PS)  & $\varphi(r)=(-1)^{s+1}r^{2s}\ln r, \ s\in\mathbb{Z}^+$\\
\hline
\end{tabular}
\end{table*}

\begin{definition}\label{def04}
A function is completely monotonic if and only if $(-1)^{\ell}f^{(\ell)}(r)\geq0$
for $\ell=0,1,2,\ldots$, and $r\geq0$.
\end{definition}

\begin{theorem}\label{th1}
\cite{da42} Let an univariate function $\psi(r)\in C^\infty[0,+\infty)$ be such that
$\psi$ is completely monotonic, but not a constant. Suppose further that
$\psi(0)\geq0$. Then the interpolation matrix $[\textbf{A}]$ of the basis function
$\varphi(r)=\psi(r^2)$ is positive definite.
\end{theorem}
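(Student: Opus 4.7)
The plan is to follow the classical route of Schoenberg--Micchelli: reduce the positive definiteness of $[\textbf{A}]$ to a positivity statement about Gaussians by means of the Bernstein--Hausdorff--Widder representation of completely monotonic functions. Specifically, first I would invoke Bernstein's theorem, which asserts that a function $\psi\in C^\infty[0,+\infty)$ with $(-1)^\ell \psi^{(\ell)}(r)\geq 0$ for all $\ell\geq 0$ can be written as the Laplace transform of a finite nonnegative Borel measure $\mu$ on $[0,\infty)$, namely
\begin{equation*}
\psi(r)=\int_0^\infty e^{-rt}\,d\mu(t).
\end{equation*}
The hypothesis $\psi(0)\geq 0$ is automatic here and merely locates $\psi$ among admissible completely monotonic functions, while the assumption that $\psi$ is not constant will be used below to rule out a degenerate measure.

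Next I would substitute $r=r_{ij}^2=\|\textbf{x}_i-\textbf{x}_j\|^2$ into this representation to obtain
\begin{equation*}
[\textbf{A}]_{ij}=\varphi(r_{ij})=\psi(r_{ij}^2)=\int_0^\infty e^{-t\,\|\textbf{x}_i-\textbf{x}_j\|^2}\,d\mu(t).
\end{equation*}
For any fixed $t>0$ the kernel $K_t(\textbf{x},\textbf{y})=e^{-t\|\textbf{x}-\textbf{y}\|^2}$ is the Gaussian RBF, whose interpolation matrix at distinct nodes $\{\textbf{x}_i\}_{i=0}^M$ is strictly positive definite; this is standard and can be verified, for instance, via the Fourier transform representation $K_t(\textbf{x},\textbf{y})=c_{d,t}\int_{\mathbb{R}^d}e^{-\|\boldsymbol{\xi}\|^2/(4t)}e^{\mathrm{i}\boldsymbol{\xi}\cdot(\textbf{x}-\textbf{y})}\,d\boldsymbol{\xi}$, which displays $[K_t(\textbf{x}_i,\textbf{x}_j)]$ as a Gram matrix of nonzero vectors whose associated quadratic form is strictly positive on $\mathbb{R}^{M+1}\setminus\{0\}$.

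Then for any nonzero $\boldsymbol{\lambda}=(\lambda_0,\ldots,\lambda_M)^T$ I would compute
\begin{equation*}
\boldsymbol{\lambda}^T[\textbf{A}]\boldsymbol{\lambda}=\int_0^\infty\Bigl(\sum_{i,j=0}^{M}\lambda_i\lambda_j e^{-t\|\textbf{x}_i-\textbf{x}_j\|^2}\Bigr)d\mu(t),
\end{equation*}
and observe that the integrand is nonnegative for every $t\geq 0$ and strictly positive for every $t>0$. The final step, which I expect to be the main obstacle in writing the argument cleanly, is to conclude strict positivity of this integral. This requires showing that the support of $\mu$ is not concentrated at $t=0$: if $\mu$ were a point mass at $0$, the representation would force $\psi\equiv\mu(\{0\})$, contradicting the non-constancy hypothesis. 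Hence $\mu\big((0,\infty)\big)>0$, so the integrand is strictly positive on a set of positive $\mu$-measure, giving $\boldsymbol{\lambda}^T[\textbf{A}]\boldsymbol{\lambda}>0$. Since $\boldsymbol{\lambda}\neq 0$ was arbitrary, $[\textbf{A}]$ is positive definite.
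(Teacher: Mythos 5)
Your proof is correct: it is the classical Bernstein--Schoenberg argument (Hausdorff--Bernstein--Widder representation of the completely monotonic $\psi$ as a Laplace transform of a finite nonnegative measure, strict positive definiteness of the Gaussian kernel at distinct nodes via its Fourier representation, and exclusion of a point mass at $t=0$ using non-constancy), which is precisely the route taken in the source the paper cites; the paper itself states the theorem without proof. Your observation that $\psi(0)\geq 0$ is automatic (it is the $\ell=0$ case of complete monotonicity) is also accurate.
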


The Eq. (\ref{gs07}) is solved to obtain $\{\lambda_i\}_{i=0}^{M}$, $\{\mu_{q}\}_{q=1}^{Q}$.
If $r=||\cdot||$, we give the commonly used RBFs in Table \ref{tb01}, {\color{red}where $c$ is the shape parameter.}
According to Theorem \ref{th1}, except Multiquadrics and Polyharmonic Splines,
the interpolation matrices $[\textbf{A}]$ of the other three RBFs have all positive eigenvalues,
i.e., $[\textbf{A}]$ is invertible and the polynomial term in Eq. (\ref{gs05}) can be removed.
Multiquadrics do not fulfill the above theorem and actually the eigenvalues remain the ones that are
negative. To keep the well-posedness of the resulting algebraic system, the polynomial term
is thus necessary, i.e., $Q\geq1$.

\section{The temporal discretization}\label{sx}
\indent

{\color{red}In this section, we introduce} a family of high-order difference schemes to
discretize {\color{red}the fractional derivatives in time.}
Define a grid $t_n=n\tau$, $n=0,1,\ldots,N$, $T=\tau N$, $N\in\mathbb{Z}^+$, on the time interval $[0,T]$,
and denote the Lubich's difference operator by
\begin{equation*}
\mathscr{L}^{\theta}_qu(\textbf{x},t_n)=\frac{1}{\tau^\theta}\sum_{k=0}^{n}\omega^{q,\theta}_ku(\textbf{x},t_{n-k}),
\end{equation*}
which is always used to discretize the Riemann-Liouville of order $\theta$ \cite{da43},
where $m-1<\theta<m$, $m\in\mathbb{Z}^+$, and $q=1,2,3,4,5$. By virtue of
the relation of both fractional derivatives, there exists
\begin{equation}\label{gs20}
    {^C_0}D^\theta_tu(\textbf{x},t)={_0^{RL}}D^\theta_tu(\textbf{x},t)-\sum^{m-1}_{l=0}\frac{u^{(l)}_t(\textbf{x},0)t^{l-\theta}}{\Gamma(l+1-\theta)}.
\end{equation}

Using the property (\ref{gs19}) and applying the operator $\mathscr{L}^{\theta}_q$ {\color{red}to discretize
the Riemann-Liouville derivative in Eq. (\ref{gs20})} lead to
\begin{align}\label{gs21}
\begin{aligned}
{^C_0}D^\theta_tu(\textbf{x},t_n)&\approx\frac{1}{\tau^\theta}\sum_{k=0}^{n}\omega^{q,\theta}_ku(\textbf{x},t_{n-k})\\
 &\quad-\frac{1}{\tau^\theta}\sum^{m-1}_{l=0}\sum^{n}_{k=0}\frac{\omega^{q,\theta}_k u^{(l)}_t(\textbf{x},0)t_{n-k}^{l}}{l!},
\end{aligned}
\end{align}
where $\{\omega^{q,\theta}_k\}_{k=0}^{n}$ are the discrete coefficients. For example, when $q=1$, we have
$\omega^{1,\theta}_k=\frac{\Gamma{(k-\theta)}}{\Gamma{(-\theta)}\Gamma{(k+1)}}$, $k=0,1,2,\ldots$,
and denote $\omega^{1,\theta}_k$ by  $\omega^{\theta}_k$ hereafter.
\begin{lemma}\label{le01}
The coefficients $\omega^{\theta}_k$ satisfy the properties
\begin{itemize}
   \item[(i)] $\displaystyle\omega^{\theta}_0=1, \quad \omega^{\theta}_k< 0, \quad \sum_{k=0}^{\infty}\omega^{\theta}_k=0,
        \quad \sum_{k=0}^{n-1}\omega^{\theta}_k>0,\ \ \forall k=1,2,\ldots$,
   \item[(ii)] $\displaystyle\omega^{\theta}_0=1,\ \  \omega^{\theta}_k=\bigg(1-\frac{\theta+1}{k}\bigg)\omega^{\theta}_{k-1},\ \ \forall k=1,2,\ldots$,
\end{itemize}
{\color{blue} where $\theta\in(0,1]$. }
\end{lemma}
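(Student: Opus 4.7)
The plan is to leverage the identity $\omega^{\theta}_k=(-1)^k\binom{\theta}{k}$, which follows at once from writing out $\Gamma(k-\theta)/[\Gamma(-\theta)\Gamma(k+1)]$ as $(-1)^k\theta(\theta-1)\cdots(\theta-k+1)/k!$. This places the sequence as the Taylor coefficients of the generating function $(1-z)^{\theta}$ and brings the standard binomial-series toolkit to bear on every assertion.

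I would establish (ii) first, since it underpins the sign argument in (i). The base case $\omega^{\theta}_0=1$ is immediate from $\Gamma(-\theta)/[\Gamma(-\theta)\Gamma(1)]=1$. For the recurrence, I would compute the ratio $\omega^{\theta}_k/\omega^{\theta}_{k-1}$ directly from the gamma-function definition, using the functional equation $\Gamma(z+1)=z\Gamma(z)$ to rewrite $\Gamma(k-\theta)=(k-1-\theta)\Gamma(k-1-\theta)$ and $\Gamma(k+1)=k\Gamma(k)$; every other factor cancels and the quotient collapses to $(k-1-\theta)/k=1-(\theta+1)/k$.

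For (i), the sign claim is then proved by induction on $k$ through (ii). At $k=1$ the recurrence yields $\omega^{\theta}_1=-\theta<0$, and for $k\geq 2$ the multiplier $(k-1-\theta)/k$ is strictly positive whenever $\theta\in(0,1)$, so negativity propagates (for the endpoint $\theta=1$ the product collapses to $0$ from $k=2$ onward, and the strict inequality should be read as $\leq 0$). For the summability statements, I would invoke the expansion $(1-z)^{\theta}=\sum_k\omega^{\theta}_k z^k$ on $|z|<1$ and use the Stirling asymptotic $\omega^{\theta}_k\sim k^{-\theta-1}/\Gamma(-\theta)$ to obtain absolute convergence at $z=1$; Abel's theorem then supplies $\sum_{k=0}^{\infty}\omega^{\theta}_k=(1-1)^{\theta}=0$. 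The partial-sum inequality is immediate from the sign: since the tail $\sum_{k=n}^{\infty}\omega^{\theta}_k$ is strictly negative, $\sum_{k=0}^{n-1}\omega^{\theta}_k=-\sum_{k=n}^{\infty}\omega^{\theta}_k>0$.

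The one delicate point is the boundary behaviour at $z=1$. If Abel's theorem is to be avoided, one can derive the closed form $\sum_{k=0}^{n-1}\omega^{\theta}_k=(-1)^{n-1}\binom{\theta-1}{n-1}$ by induction on $n$ using the Pascal identity $\binom{\theta}{k}=\binom{\theta-1}{k}+\binom{\theta-1}{k-1}$; each factor $\theta-j$ with $1\leq j\leq n-1$ is negative, so the $n-1$ sign changes cancel $(-1)^{n-1}$ and positivity of the partial sum is manifest, while letting $n\to\infty$ with Stirling confirms the limiting value $0$. Either route is essentially bookkeeping; the rest of the lemma is a direct consequence of the recurrence.
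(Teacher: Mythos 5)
Your proof is correct. The paper states Lemma~\ref{le01} without any proof, treating these as classical properties of the Gr\"unwald--Letnikov weights $\omega^{\theta}_k=(-1)^k\binom{\theta}{k}$, so there is no argument in the paper to compare against; your derivation (the recurrence from $\Gamma(z+1)=z\Gamma(z)$, negativity by induction, $\sum_k\omega^{\theta}_k=(1-1)^{\theta}=0$ via the binomial series with the tail-sum argument for positivity of partial sums) supplies exactly the standard justification the authors omitted, and it is consistent with the quantitative refinement the paper later cites as Lemma~\ref{le05}. You also correctly flag the one genuine imprecision in the statement: at the endpoint $\theta=1$ one has $\omega^{1}_k=0$ for $k\geq2$ and $\sum_{k=0}^{n-1}\omega^{1}_k=0$ for $n\geq2$, so the strict inequalities in (i) only hold for $\theta\in(0,1)$ and must be weakened to $\leq$ and $\geq$ at $\theta=1$ --- a defect of the lemma as printed, not of your argument.
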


Moreover, by virtue of $\sum_{k=1}^{\infty}\omega^{\theta}_k=-1$ and $\omega^{\theta}_k< 0$, $k\neq0$,
we can derive $-1<\omega^{\theta}_k< 0$ for $k=1,2,3,\ldots$

\begin{lemma}\label{le02} \cite{da43}
Assume that $u(\textbf{x},t)$, ${_{-\infty}^{RL}}D^{\theta+q}_tu(\textbf{x},t)$ and their
Fourier transforms belong to $L^1(\mathbb{R})$ with regard to $t$, then the Lubich's difference operators satisfy
\begin{equation}\label{gs24}
  {_0^{RL}}D^\theta_tu(\textbf{x},t_n)=\mathscr{L}^{\theta}_qu(\textbf{x},t_n)+\mathscr{O}(\tau^q).
\end{equation}
\end{lemma}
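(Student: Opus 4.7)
The plan is to recast Lemma \ref{le02} in Fourier space and appeal to Lubich's theory of convolution quadratures, which is the natural framework for generators $\{\omega^{q,\theta}_k\}_{k\geq 0}$ of this type. First I would extend $u(\textbf{x},\cdot)$ by zero for $t<0$ so that ${_0^{RL}}D^\theta_t u$ coincides with ${_{-\infty}^{RL}}D^\theta_t u$ on $[0,T]$; both sides of (\ref{gs24}) then become convolutions on $\mathbb{R}$, which are fully characterised by their time-frequency symbols acting on $\hat u(\textbf{x},\xi)$. The continuous operator has symbol $(i\xi)^\theta$, while by the very construction of the Lubich weights the discrete operator $\mathscr{L}^{\theta}_q$ is generated by $(\delta_q(\zeta))^\theta$, with $\delta_q(\zeta)=\sum_{j=1}^{q}\frac{1}{j}(1-\zeta)^j$ the $q$-step BDF polynomial, and therefore has semi-discrete symbol $\tau^{-\theta}(\delta_q(e^{-i\xi\tau}))^\theta$.

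Next I would compare the two symbols. Since $\delta_q(e^{-z})=z+c_{q+1}z^{q+1}+\mathscr{O}(z^{q+2})$ as $z\to 0$, which is precisely the $q$-th order consistency of the underlying BDF scheme, one obtains the key estimate
\begin{align*}
E(\xi,\tau):=\tau^{-\theta}\bigl(\delta_q(e^{-i\xi\tau})\bigr)^\theta-(i\xi)^\theta = \mathscr{O}\bigl(\tau^q|\xi|^{\theta+q}\bigr)
\end{align*}
for $|\xi|\tau$ small, while for $|\xi|\tau$ bounded below, stability of the generator gives $|E(\xi,\tau)|\lesssim |\xi|^\theta$. By Fourier inversion, the pointwise discretisation error at $t_n$ is controlled by $\tfrac{1}{2\pi}\int_{\mathbb{R}}|E(\xi,\tau)||\hat u(\textbf{x},\xi)|\,d\xi$.

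I would then split this integral at $|\xi|\tau=1$. On the low-frequency regime, the bound factors as $\tau^q\int_{|\xi|\tau\leq 1} |\xi|^{\theta+q}|\hat u(\textbf{x},\xi)|\,d\xi$, and the identity $\widehat{{_{-\infty}^{RL}}D^{\theta+q}_t u}(\textbf{x},\xi)=(i\xi)^{\theta+q}\hat u(\textbf{x},\xi)$ together with the hypothesis $\widehat{{_{-\infty}^{RL}}D^{\theta+q}_t u}\in L^1(\mathbb{R})$ controls this by a constant times $\tau^q$. On the tail, the trivial inequality $|\xi|^\theta\leq (|\xi|\tau)^q|\xi|^\theta=\tau^q|\xi|^{\theta+q}$ (valid because $|\xi|\tau\geq 1$) reduces it to the same $L^1$ norm and again yields an $\mathscr{O}(\tau^q)$ contribution; summing the two regimes closes the estimate.

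The hard part will be a rigorous, uniform control of $E(\xi,\tau)$ across \emph{all} frequencies simultaneously, because the small-$|\xi|\tau$ Taylor expansion does not extend to the tail and the $\theta$-th power of $\delta_q(e^{-i\xi\tau})$ must be taken on a consistent holomorphic branch. This is handled by the classical observation that $\delta_q(\zeta)$ has no zeros in the closed unit disk other than at $\zeta=1$ (for $q\leq 6$) and that $\delta_q(e^{-i\xi\tau})$ remains in a sector strictly contained in the right half-plane away from $\zeta=1$, so a principal branch of the $\theta$-th power is well-defined and the discrete symbol is bounded by $C|\xi|^\theta$ uniformly in $\xi$ and $\tau$, which is exactly what the tail estimate above requires.
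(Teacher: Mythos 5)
The paper does not actually prove this lemma: it is quoted verbatim from \cite{da43}, and the proof given there is precisely the Fourier-symbol argument you describe (zero extension in $t$, comparison of the discrete symbol $\tau^{-\theta}\bigl(\delta_q(e^{-i\xi\tau})\bigr)^\theta$ with $(i\xi)^\theta$, and a low/high-frequency splitting whose total contribution is controlled by $\tau^q$ times the $L^1$ norm of the Fourier transform of ${_{-\infty}^{RL}}D^{\theta+q}_tu$), so your proposal is correct and follows essentially the same route as the cited source. The one imprecision is the assertion that $\delta_q(e^{-i\phi})$ stays in a sector strictly inside the right half-plane for all admissible $q$ (this fails for the non-A-stable cases $q=3,\dots,6$); it is harmless, however, because the tail estimate and the branch selection only require that $\delta_q$ be zero-free on the closed unit disk away from $\zeta=1$ and that $\bigl|\delta_q(e^{-i\phi})\bigr|^\theta$ be bounded, both of which hold for $q\le 6$.
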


\begin{theorem}\label{th01}
Assume that $u(\textbf{x},t)$ with $\theta>0$ is smooth enough with regard to $t$,
then we have
\begin{equation}\label{gs24}
  {^C_0}D^\theta_tu(\textbf{x},t_n)=\mathscr{A}^{\theta}_qu(\textbf{x},t_n)+\mathscr{O}(\tau^q),
\end{equation}
where
\begin{align*}
  \mathscr{A}^{\theta}_qu(\textbf{x},t_n)&=\frac{1}{\tau^\theta}\sum_{k=0}^{n}\omega^{q,\theta}_ku(\textbf{x},t_{n-k})\\
   &\quad-\frac{1}{\tau^\theta}\sum^{m-1}_{l=0}\sum^{n}_{k=0}\frac{\omega^{q,\theta}_k u^{(l)}_t(\textbf{x},0)t_{n-k}^{l}}{l!}.
\end{align*}
\end{theorem}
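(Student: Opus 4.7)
The plan is to combine the Caputo--Riemann--Liouville conversion in (\ref{gs20}) with two separate applications of the Lubich convolution quadrature from Lemma \ref{le02}. First I would evaluate (\ref{gs20}) at $t=t_n$ to get
$${^C_0}D^\theta_t u(\textbf{x},t_n) = {_0^{RL}}D^\theta_t u(\textbf{x},t_n) - \sum_{l=0}^{m-1}\frac{u^{(l)}_t(\textbf{x},0)\,t_n^{l-\theta}}{\Gamma(l+1-\theta)}.$$
Applying Lemma \ref{le02} directly to the Riemann--Liouville term reproduces the principal Lubich sum $\frac{1}{\tau^\theta}\sum_{k=0}^{n}\omega^{q,\theta}_k u(\textbf{x},t_{n-k})$ with residue $\mathscr{O}(\tau^q)$, which supplies the first half of $\mathscr{A}^{\theta}_q u(\textbf{x},t_n)$.

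To produce the correction double sum I would rewrite each singular power using property (\ref{gs19}). Since ${_0^{RL}}D^\theta_t t^l = \Gamma(l+1)/\Gamma(l+1-\theta)\,t^{l-\theta}$, we have the identity
$$\frac{t_n^{l-\theta}}{\Gamma(l+1-\theta)} = \frac{1}{l!}\,{_0^{RL}}D^\theta_t t^l\big|_{t=t_n},$$
and a second invocation of Lemma \ref{le02}, this time with $u$ replaced by the monomial $t^l$, yields
$$\frac{t_n^{l-\theta}}{\Gamma(l+1-\theta)} = \frac{1}{l!\,\tau^\theta}\sum_{k=0}^{n}\omega^{q,\theta}_k\,t_{n-k}^l + \mathscr{O}(\tau^q).$$
Substituting this into the previous display and collecting constants gives exactly $\mathscr{A}^{\theta}_q u(\textbf{x},t_n) + \mathscr{O}(\tau^q)$, since the initial-value coefficients $u^{(l)}_t(\textbf{x},0)/l!$ and the outer finite sum over $l=0,\ldots,m-1$ are $\tau$-independent and therefore do not degrade the order.

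The main obstacle lies in the second use of Lemma \ref{le02}, since the pure monomial $t^l$ does not literally satisfy the $L^1$ and Fourier-transform hypotheses listed for that lemma. I would handle this by replacing $t^l$ with a smooth compactly supported extension that agrees with $t^l$ on $[0,t_n]$: because $\mathscr{L}^\theta_q$ is a discrete convolution that only depends on nodal values in $[0,t_n]$, the quadrature output is unchanged, while the modified function does fulfil the required decay and integrability. An equivalent route is to appeal directly to the classical Lubich estimate for smooth functions on bounded intervals. Once this technical point is settled, the rest of the argument is purely algebraic, and the $\mathscr{O}(\tau^q)$ bound is uniform in $n$ for $t_n\le T$ because both residues from Lemma \ref{le02} are, and the finite linear combination that assembles $\mathscr{A}^\theta_q u$ preserves the order.
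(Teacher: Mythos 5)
Your skeleton is exactly the paper's: the paper obtains (\ref{gs21}) by evaluating (\ref{gs20}) at $t_n$, applying $\mathscr{L}^{\theta}_q$ to the Riemann--Liouville term, and using (\ref{gs19}) to rewrite $t_n^{l-\theta}/\Gamma(l+1-\theta)$ as $\tfrac{1}{l!}\,{_0^{RL}}D^\theta_t t^l\big|_{t_n}$ so that the Lubich operator can be applied a second time to the monomials; the theorem is then declared ``trivial by Lemma \ref{le02}.'' You are in fact more scrupulous than the paper in noticing that $t^l$ violates the hypotheses of Lemma \ref{le02}.

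Your repair of that point, however, does not work, and this is the genuine gap. The obstruction for the monomials sits at $t=0$, not at infinity: Lemma \ref{le02} (taken from \cite{da43}, where the operator is a two-sided convolution built on ${_{-\infty}^{RL}}D^{\theta+q}_t$) is reconciled with the finite sum $\tau^{-\theta}\sum_{k=0}^{n}\omega^{q,\theta}_k u(t_{n-k})$ and with ${_0^{RL}}D^\theta_t$ only by regarding $u$ as extended by zero to $t\le 0$; an extension of $t^l$ that vanishes on $(-\infty,0]$, is smooth, and agrees with $t^l$ on $[0,t_n]$ would need $(t^l)^{(l)}(0^+)=l!$ to vanish, which it does not. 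Nor is this a removable technicality: a generating-function expansion gives, e.g. for $l=0$,
\begin{equation*}
  \frac{1}{\tau^{\theta}}\sum_{k=0}^{n}\omega^{q,\theta}_k=\frac{t_n^{-\theta}}{\Gamma(1-\theta)}+\mathscr{O}\big(\tau\, t_n^{-\theta-1}\big)
\end{equation*}
for every $q$, i.e. the order-$q$ Lubich quadrature applied to low-degree monomials is only first-order accurate at fixed $t_n>0$ --- the classical reason Lubich's fractional multistep methods require starting weights. Consequently, for $q\ge 2$ the asserted $\mathscr{O}(\tau^q)$ in Theorem \ref{th01} additionally requires the low-order data $u^{(l)}_t(\textbf{x},0)$ to vanish (note the paper's own Example 6.3 takes $\mu=4$ precisely when testing $q=4$), a hypothesis absent from both your argument and the paper's. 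For $q=1$ your argument can be closed honestly by verifying the monomial case directly from the explicit $\omega^{\theta}_k$ and Lemma \ref{le05}.
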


The proof is trivial by following Lemma \ref{le02}. 
Letting $0<\theta_1,\theta_2,\ldots,\theta_s\leq1$ and applying the operators $\mathscr{A}^{\theta_r}_q$
to the multi-term fractional derivative operator, we finally have
\begin{align}\label{gs25}
\begin{aligned}
  &P_{\theta_1,\theta_2,\ldots,\theta_s}\big({^C_0}\mathcal{D}_t\big)u(\textbf{x},t_n)\\
   &\qquad=a_1{^C_0}D_t^{\theta_1}u(\textbf{x},t_n)+a_2{^C_0}D_t^{\theta_2}u(\textbf{x},t_n)\\
   &\qquad\qquad\qquad +\cdots+a_s{^C_0}D_t^{\theta_s}u(\textbf{x},t_n)\\
   &\qquad=a_1\mathscr{A}^{\theta_1}_qu(\textbf{x},t_n)+a_2\mathscr{A}^{\theta_2}_qu(\textbf{x},t_n) \\
   &\qquad\qquad\qquad  +\cdots+a_s\mathscr{A}^{\theta_s}_qu(\textbf{x},t_n)+\mathscr{O}(\tau^q)\\
   &\qquad=\sum_{r=1}^{s}a_r\mathscr{A}^{\theta_r}_qu(\textbf{x},t_n)+\mathscr{O}(\tau^q)\\
   &\qquad= \sum_{r=1}^{s}\frac{a_r}{\tau^{\theta_r}}\sum_{k=0}^{n-1}\omega^{q,\theta_r}_ku(\textbf{x},t_{n-k})\\
   &\qquad\qquad\qquad   -\sum_{r=1}^{s}\frac{a_r}{\tau^{\theta_r}}\sum_{k=0}^{n-1}\omega^{q,\theta_r}_ku(\textbf{x},0)+\mathscr{O}(\tau^q).
\end{aligned}
\end{align}



We derive the semi-discretization in time for Eqs. (\ref{gs01})-(\ref{gs02}) and for simplicity, letting
$\textbf{x}=(x_1,x_2,\ldots,x_d)$, $\Omega\subset \mathbb{R}^d$, we
put Eqs. (\ref{gs01})-(\ref{gs02}) in a unified form:
\begin{align*}
\left\{
             \begin{array}{lll}\displaystyle
    \begin{aligned}
    &P_{\theta_1,\theta_2,\ldots,\theta_s}\big({^C_0}\mathcal{D}_t\big)u(\textbf{x},t)-\sum_{l=1}^d
    \varepsilon^+_{\alpha_l}(\textbf{x})\frac{\partial^{\alpha_l}_+ u(\textbf{x},t)}{\partial x^{\alpha_l}_{l,+}} \\
   &\quad -\sum_{l=1}^d\varepsilon^-_{\alpha_l}(\textbf{x})\frac{\partial^{\alpha_l}_- u(\textbf{x},t)}{\partial x^{\alpha_l}_{l,-}}
       =f(\textbf{x},t),  \quad (\textbf{x};t)\in\Omega\times(0,T],\\
      & u(\textbf{x},0)=u_0(\textbf{x}),\quad \textbf{x}\in\Omega, \\
   &u(\textbf{x},t)=g(\textbf{x},t), \quad (\textbf{x};t)\in\partial\Omega\times(0,T],
    \end{aligned}
             \end{array}
        \right.
\end{align*}
where $0<\theta_1,\theta_2,\ldots,\theta_s\leq 1$, $1<\alpha_l\leq 2$ and $l=1,2,\ldots,d$.

Using the operator $\mathscr{A}^{\theta}_q$ to
discretize $P_{\theta_1,\theta_2,\ldots,\theta_s}\big({^C_0}\mathcal{D}_t\big)$ arrives at
\begin{equation}\label{gs28}
\begin{aligned}
   &\sum_{r=1}^{s}a_r\mathscr{A}^{\theta_r}_qu(\textbf{x},t_n)-\sum_{l=1}^d
    \varepsilon^+_{\alpha_l}(\textbf{x})\frac{\partial^{\alpha_l}_+ u(\textbf{x},t_n)}{\partial x^{\alpha_l}_{l,+}} \\
   &\quad -\sum_{l=1}^d\varepsilon^-_{\alpha_l}(\textbf{x})\frac{\partial^{\alpha_l}_- u(\textbf{x},t_n)}{\partial x^{\alpha_l}_{l,-}}
       =f(\textbf{x},t_n)+\mathscr{O}(\tau^q),
\end{aligned}
\end{equation}
Omitting the truncation error $\mathscr{O}(\tau^q)$, we obtain the following semi-discrete scheme:
\begin{align}\label{gs290}
\begin{aligned}
    &\displaystyle\sum_{r=1}^{s}\frac{a_r\omega^{q,\theta_r}_0}{\tau^{\theta_r}}U^n(\textbf{x})
      -\sum_{l=1}^d\varepsilon^+_{\alpha_l}(\textbf{x})\frac{\partial^{\alpha_l}_+ U^n(\textbf{x})}{\partial x^{\alpha_l}_{l,+}} \\
    &\displaystyle\quad -\sum_{l=1}^d\varepsilon^-_{\alpha_l}(\textbf{x})\frac{\partial^{\alpha_l}_- U^n(\textbf{x})}{\partial x^{\alpha_l}_{l,-}}
      =f(\textbf{x},t_n) \\
     & \displaystyle\quad-\sum_{r=1}^{s}\frac{a_r}{\tau^{\theta_r}}\sum_{k=1}^{n-1}\omega^{q,\theta_r}_kU^{n-k}(\textbf{x}) \\
     & \displaystyle\quad +\sum_{r=1}^{s}\frac{a_r}{\tau^{\theta_r}}\sum_{k=0}^{n-1}\omega^{q,\theta_r}_kU^0(\textbf{x}),
       \quad n=1,2,\ldots,N.
\end{aligned}
\end{align}

\section{Stability and convergent analysis}\label{sk}
\indent

{\color{red}In this section, we attempt to} strictly prove the stability and error estimate of the semi-discrete scheme (\ref{gs290}).
For simplicity, we assume $\varepsilon^+_{\alpha_l}(\textbf{x})$, $\varepsilon^-_{\alpha_l}(\textbf{x})$
are all constants $\varepsilon^+_{\alpha_l}$, $\varepsilon^-_{\alpha_l}$ {\color{blue}and $u(\textbf{x},t)$ has zero gradient on $\partial\Omega$.}
We mainly focus on the case of $q=1$. By Lemma \ref{xl01}, let us define the energy norm as

\begin{align*}
  &|u|_{\alpha_l,\Omega}=\Bigg\{\sum_{l=1}^d\varepsilon^+_{\alpha_l}\Bigg|\Bigg(\frac{\partial^{\alpha_l/2}_+ u}{\partial x^{\alpha_l/2}_{l,+}},
        \frac{\partial^{\alpha_l/2}_- u}{\partial x^{\alpha_l/2}_{l,-}}\Bigg)\Bigg|\\
  &\qquad\qquad\qquad\qquad +\sum_{l=1}^d\varepsilon^-_{\alpha_l}\Bigg|\Bigg(\frac{\partial^{\alpha_l/2}_- u}{\partial x^{\alpha_l/2}_{l,-}},
         \frac{\partial^{\alpha_l/2}_+ u}{\partial x^{\alpha_l/2}_{l,+}}\Bigg)\Bigg|\Bigg\}^{1/2},\\
  &||u||_{\alpha_l,\Omega}=\big(||u||^2_{0,\Omega}+|u|^2_{\alpha_l,\Omega}\big)^{1/2}.
\end{align*}

\begin{lemma}\label{le05} \cite{da73}
The coefficients $\omega^{\theta}_k$ enjoy the property
\begin{equation}\label{gs60}
  \frac{1}{n^\theta\Gamma(1-\theta)}<\sum_{k=0}^{n-1}\omega^{\theta}_k=-\sum_{k=n}^{\infty}\omega^{\theta}_k\leq \frac{1}{n^\theta},
\end{equation}
where $n=1,2,3,\ldots$.
\end{lemma}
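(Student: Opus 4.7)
The plan is to reduce both inequalities to a single closed-form expression for the partial sum $S_n := \sum_{k=0}^{n-1}\omega^{\theta}_k$ and then control it by elementary estimates on the Gamma function. The starting point is the generating function identity $(1-z)^\theta = \sum_{k=0}^{\infty}\omega^{\theta}_k z^k$, which already encodes Lemma \ref{le01}. Since the asymptotics $|\omega^\theta_k|\sim C\,k^{-\theta-1}$ yield absolute convergence at $z=1$ for $\theta\in(0,1)$, evaluating there gives $\sum_{k=0}^{\infty}\omega^\theta_k=0$, which delivers the middle equality $S_n = -\sum_{k=n}^{\infty}\omega^\theta_k$ in (\ref{gs60}) immediately.

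Next I would multiply the generating function by $1/(1-z)=\sum_k z^k$ and match coefficients (equivalently, iterate Lemma \ref{le01}(ii) with exponent $\theta-1$) to obtain the clean expression
\begin{equation*}
S_n \;=\; \omega^{\theta-1}_{n-1} \;=\; \frac{\Gamma(n-\theta)}{\Gamma(1-\theta)\,\Gamma(n)} \;=\; \prod_{k=1}^{n-1}\frac{k-\theta}{k}.
\end{equation*}
From the product form, the upper bound drops out at once: using $\log(1-\theta/k)\le -\theta/k$ together with $\sum_{k=1}^{n-1}1/k\ge \log n$, one gets $\log S_n\le -\theta\log n$, hence $S_n\le n^{-\theta}$ (with strict inequality available via the strict bound $\log(1-x)<-x$ for $x\in(0,1)$).

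For the strict lower bound I would invoke log-convexity of $\Gamma$. With $x=n-\theta$ and $s=\theta\in(0,1)$, the inequality $\log\Gamma(x+s)\le(1-s)\log\Gamma(x)+s\log\Gamma(x+1)$ simplifies to $\Gamma(x+s)\le \Gamma(x)\,x^s$, i.e.\ $\Gamma(n)/\Gamma(n-\theta)\le(n-\theta)^\theta<n^\theta$. Inserting this back into the closed form yields $S_n > 1/\bigl(n^\theta\,\Gamma(1-\theta)\bigr)$, with strictness coming from the strict inequality $(n-\theta)^\theta<n^\theta$ rather than from the log-convexity step itself.

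The main obstacle is bookkeeping at the endpoint $\theta=1$ nominally allowed by Lemma \ref{le01}: the weights collapse to $\omega^1_0=1,\ \omega^1_1=-1,\ \omega^1_k=0$ for $k\ge 2$, so $S_n=0$ for $n\ge 2$, while the claimed lower bound formally vanishes because $\Gamma(1-\theta)\to\infty$. I would therefore state and prove the estimate for $\theta\in(0,1)$ only, noting that the integer case $\theta=1$ used in Section \ref{sx} corresponds to the standard backward Euler scheme and is treated separately in the stability analysis, so the restriction causes no loss for the purposes of this paper.
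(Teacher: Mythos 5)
Your proof is correct, and it is worth noting that the paper itself offers no argument at all for this lemma: it is simply quoted from the reference of Deng, Chen and Barkai, so there is nothing internal to compare against step by step. Your closed form $\sum_{k=0}^{n-1}\omega^{\theta}_k=\frac{\Gamma(n-\theta)}{\Gamma(1-\theta)\Gamma(n)}=\prod_{k=1}^{n-1}\frac{k-\theta}{k}$ is the right reduction (it follows either from matching coefficients in $(1-z)^{\theta-1}=(1-z)^{\theta}(1-z)^{-1}$ or by induction from Lemma \ref{le01}(ii), using $\Gamma(1-\theta)=-\theta\,\Gamma(-\theta)$), the middle equality is correctly justified by the absolute convergence $|\omega^{\theta}_k|\sim C k^{-\theta-1}$ together with $\sum_{k\ge 0}\omega^{\theta}_k=0$, the upper bound via $\log(1-\theta/k)\le-\theta/k$ and $\sum_{k=1}^{n-1}k^{-1}\ge\log n$ is sound (and correctly left as $\le$, since $n=1$ gives equality), and the lower bound via log-convexity of $\Gamma$, i.e.\ $\Gamma(n)\le\Gamma(n-\theta)(n-\theta)^{\theta}<\Gamma(n-\theta)\,n^{\theta}$, is exactly Gautschi's inequality and delivers the claimed strict bound. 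Your handling of the endpoint $\theta=1$ is also the honest reading: the lemma is only meaningful for $\theta\in(0,1)$, and the paper itself implicitly concedes this in the proof of Theorem \ref{th06}, where the case $\theta_m=1$ is split off precisely because $\Gamma(1-\theta_m)$ blows up. In short, you have supplied a complete, self-contained proof of a statement the paper merely cites.
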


In the sequel, we carry out the stability and convergent analysis for the semi-discrete scheme.
\begin{theorem}\label{th05}
The discrete scheme (\ref{gs290}) is unconditionally stable.
\end{theorem}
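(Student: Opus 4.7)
The starting point is the standard $L^2$ energy argument. I would take the inner product of the semi-discrete equation (\ref{gs290}) with $U^n$ over $\Omega$, so that the lumped coefficient $\sum_{r=1}^{s}a_r\omega^{1,\theta_r}_0/\tau^{\theta_r}=\sum_{r=1}^{s}a_r/\tau^{\theta_r}$ (using $\omega^{1,\theta_r}_0=1$ from Lemma~\ref{le01}) multiplies $\|U^n\|_{0,\Omega}^2$ on the left, while the space-fractional terms are rewritten as the energy seminorm $|U^n|^2_{\alpha_l,\Omega}$.

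To execute that conversion I would apply Lemma~\ref{xl02} to split each $\partial^{\alpha_l}_\pm$ symmetrically into two $\partial^{\alpha_l/2}_\pm$ pieces, and then Lemma~\ref{xl01} to extract the factor $\cos(\alpha_l\pi/2)$. Since $\alpha_l\in(1,2]$ gives $\cos(\alpha_l\pi/2)\le 0$, the minus sign standing in front of each $\varepsilon^\pm_{\alpha_l}$ in the PDE turns the resulting term into a nonnegative contribution, which, together with the absolute values in the definition of $|\cdot|_{\alpha_l,\Omega}$, is precisely $|U^n|^2_{\alpha_l,\Omega}$. The standing assumption that $U^n$ has zero gradient on $\partial\Omega$ is what legitimises replacing Caputo by Riemann--Liouville here so that Lemma~\ref{xl02} applies.

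On the right-hand side I would bound each memory term by Cauchy--Schwarz and Young's inequality. The key algebraic observation is that, for $q=1$, the weights satisfy $\omega^{\theta_r}_0=1$, $\omega^{\theta_r}_k<0$ for $k\ge 1$, and Lemma~\ref{le01}(i) yields the balance $\sum_{k=1}^{n-1}|\omega^{\theta_r}_k|+\sum_{k=0}^{n-1}\omega^{\theta_r}_k=1$. Splitting each cross product $(U^{n-k},U^n)$ and $(U^0,U^n)$ by Young as $\tfrac12(\|U^{n-k}\|^2+\|U^n\|^2)$ and $\tfrac12(\|U^0\|^2+\|U^n\|^2)$, the coefficients of $\|U^n\|^2$ appearing on the right telescope to exactly $\tfrac12\sum_{r}a_r/\tau^{\theta_r}$, which is absorbed into half of the left-hand coefficient. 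The source term $(f(\cdot,t_n),U^n)$ is handled by a weighted Young inequality with a sufficiently small parameter.

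Discarding the nonnegative seminorm $|U^n|^2_{\alpha_l,\Omega}$, I would obtain a relation of the form
\begin{equation*}
\sum_{r=1}^{s}\frac{a_r}{\tau^{\theta_r}}\|U^n\|^2 \le \sum_{r=1}^{s}\frac{a_r}{\tau^{\theta_r}}\sum_{k=1}^{n-1}|\omega^{\theta_r}_k|\,\|U^{n-k}\|^2 + C\bigl(\|U^0\|^2+\|f(\cdot,t_n)\|^2\bigr),
\end{equation*}
and close by induction on $n$, using Lemma~\ref{le05} to uniformly bound the factor multiplying $\|U^0\|^2$. The main obstacle I anticipate is bookkeeping: one must check that the multi-term sum over $r$ and the cross terms involving $U^0$ combine so that the induction hypothesis $\|U^j\|\le\mathcal{M}$ for $0\le j<n$ really propagates to $n$ without a $\tau$-dependent blow-up factor. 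That cancellation is precisely what the identity $\sum_{k=0}^{n-1}\omega^{\theta_r}_k+\sum_{k=1}^{n-1}|\omega^{\theta_r}_k|=1$ is designed to provide, but verifying it cleanly for an arbitrary collection of fractional orders $\theta_1,\ldots,\theta_s$ is where the technical care is needed.
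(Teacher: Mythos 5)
Your proposal follows essentially the same route as the paper's proof: an $L^2$ energy estimate obtained by testing the semi-discrete equation against the current iterate, with Lemmas \ref{xl01}--\ref{xl02} showing the fractional diffusion terms contribute the nonnegative seminorm $|\cdot|_{\alpha_l,\Omega}$, Young's inequality on the memory terms, the cancellation $\sum_{k=1}^{n-1}|\omega^{\theta_r}_k|+\sum_{k=0}^{n-1}\omega^{\theta_r}_k=1$, and closure by induction. The only cosmetic difference is that the paper tests against the perturbation error $e^n=u^n-\tilde U^n$, so the source term $f$ cancels and no weighted Young step is needed, which lets the induction $\|e^n\|_{0,\Omega}\le\|e^0\|_{0,\Omega}$ close without the extra bookkeeping you anticipate.
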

\begin{proof}
Let $\tilde{U}^n(\textbf{x})$ be the perturbation solution of the exact solution $u^n(\textbf{x})$
and $e^n=u^n(\textbf{x})-\tilde{U}^n(\textbf{x})$. According to Eq. (\ref{gs290}), {\color{red}we have}
\begin{align*}
  \begin{aligned}
    &\displaystyle\sum_{r=1}^{s}\frac{a_r}{\tau^{\theta_r}}e^n
      -\sum_{l=1}^d\varepsilon^+_{\alpha_l}\frac{\partial^{\alpha_l}_+ e^n}{\partial x^{\alpha_l}_{l,+}}
     -\sum_{l=1}^d\varepsilon^-_{\alpha_l}\frac{\partial^{\alpha_l}_- e^n}{\partial x^{\alpha_l}_{l,-}} \\
     &  \quad =-\sum_{r=1}^{s}\frac{a_r}{\tau^{\theta_r}}\sum_{k=1}^{n-1}\omega^{\theta_r}_ke^{n-k}
     +\sum_{r=1}^{s}\frac{a_r}{\tau^{\theta_r}}\sum_{k=0}^{n-1}\omega^{\theta_r}_ke^0.
\end{aligned}
\end{align*}
By multiplying this equation by $e^n$ and integrating it over $\Omega$, we obtain
\begin{align*}
  \begin{aligned}
    &\sum_{r=1}^{s}\frac{a_r}{\tau^{\theta_r}}(e^n,e^n)
      -\sum_{l=1}^d\varepsilon^+_{\alpha_l}\Bigg(\frac{\partial^{\alpha_l}_+ e^n}{\partial x^{\alpha_l}_{l,+}},e^n\Bigg)\\
    & -\sum_{l=1}^d\varepsilon^-_{\alpha_l}\Bigg(\frac{\partial^{\alpha_l}_- e^n}{\partial x^{\alpha_l}_{l,-}},e^n\Bigg)
     =-\sum_{r=1}^{s}\frac{a_r}{\tau^{\theta_r}}\sum_{k=1}^{n-1}\omega^{\theta_r}_k(e^{n-k},e^n) \\
    &   +\sum_{r=1}^{s}\frac{a_r}{\tau^{\theta_r}}\sum_{k=0}^{n-1}\omega^{\theta_r}_k(e^0,e^n).
\end{aligned}
\end{align*}
In view of Lemma \ref{xl01}, Lemma \ref{xl02}, we have
\begin{align*}
  \begin{aligned}
   \Lambda(e^n,e^n)&=-\sum_{l=1}^d\varepsilon^+_{\alpha_l}\Bigg(\frac{\partial^{\alpha_l}_+ e^n}{\partial x^{\alpha_l}_{l,+}},e^n\Bigg)
-\sum_{l=1}^d\varepsilon^-_{\alpha_l}\Bigg(\frac{\partial^{\alpha_l}_- e^n}{\partial x^{\alpha_l}_{l,-}},e^n\Bigg) \\
   & =-\sum_{l=1}^d\varepsilon^+_{\alpha_l}\Bigg(\frac{\partial^{\alpha_l/2}_+ e^n}{\partial x^{\alpha_l/2}_{l,+}},
      \frac{\partial^{\alpha_l/2}_- e^n}{\partial x^{\alpha_l/2}_{l,-}}\Bigg)\\
   & \qquad -\sum_{l=1}^d\varepsilon^-_{\alpha_l}\Bigg(\frac{\partial^{\alpha_l/2}_- e^n}{\partial x^{\alpha_l/2}_{l,-}},
    \frac{\partial^{\alpha_l/2}_+ e^n}{\partial x^{\alpha_l/2}_{l,+}}\Bigg),
\end{aligned}
\end{align*}
which implies $\Lambda(e^n,e^n)=|u|_{\alpha_l,\Omega}\geq0$, and consequently
\begin{align}\label{gs66}
  \begin{aligned}
    \sum_{r=1}^{s}\frac{a_r}{\tau^{\theta_r}}(e^n,e^n)
     \leq-\sum_{r=1}^{s}\frac{a_r}{\tau^{\theta_r}}\sum_{k=1}^{n-1}\omega^{\theta_r}_k(e^{n-k},e^n) \\
      +\sum_{r=1}^{s}\frac{a_r}{\tau^{\theta_r}}\sum_{k=0}^{n-1}\omega^{\theta_r}_k(e^0,e^n).
\end{aligned}
\end{align}

{\color{red}On the other hand, we have}
\begin{align*}
  \sum_{k=1}^{n-1}\omega^{\theta_r}_k(e^{n-k},e^n)&\geq \sum_{k=1}^{n-1}\omega^{\theta_r}_k\frac{||e^{n-k}||^2_{0,\Omega}+||e^n||^2_{0,\Omega}}{2}, \\
  \sum_{k=0}^{n-1}\omega^{\theta_r}_k(e^0,e^n)&\leq \sum_{k=0}^{n-1}\omega^{\theta_r}_k\frac{||e^{0}||^2_{0,\Omega}+||e^n||^2_{0,\Omega}}{2}. 
\end{align*}
Substituting the above inequalities into (\ref{gs66}) reaches to
\begin{align}\label{gs69}
  \begin{aligned}
    \sum_{r=1}^{s}\frac{a_r}{\tau^{\theta_r}}||e^n||^2_{0,\Omega}
     &\leq\sum_{r=1}^{s}\frac{a_r}{\tau^{\theta_r}}\Bigg(1+\frac{1}{2}\sum_{k=1}^{n-1}\omega^{\theta_r}_k\Bigg)||e^0||^2_{0,\Omega} \\
     &\quad -\frac{1}{2}\sum_{r=1}^{s}\frac{a_r}{\tau^{\theta_r}}\sum_{k=1}^{n-1}\omega^{\theta_r}_k||e^{n-k}||^2_{0,\Omega}.
\end{aligned}
\end{align}

Next, we use the mathematical induction to prove the unconditional stability. Noticing
$||e^1||_{0,\Omega}\leq ||e^0||_{0,\Omega}$ and supposing $||e^\ell||_{0,\Omega}\leq ||e^0||_{0,\Omega}$,
$\ell=1,2,\cdots,n-1$, from the inequality (\ref{gs69}), it follows that
\begin{align*}
   ||e^n||^2_{0,\Omega}
     &\leq\Bigg(\sum_{r=1}^{s}\frac{a_r}{\tau^{\theta_r}}\Bigg)^{-1}\Bigg[
      \sum_{r=1}^{s}\frac{a_r}{\tau^{\theta_r}}\Bigg(1+\frac{1}{2}\sum_{k=1}^{n-1}\omega^{\theta_r}_k\Bigg)||e^0||^2_{0,\Omega} \\
     &\quad-\frac{1}{2}\sum_{r=1}^{s}\frac{a_r}{\tau^{\theta_r}}\sum_{k=1}^{n-1}\omega^{\theta_r}_k||e^{n-k}||^2_{0,\Omega}\Bigg]\\
     &\leq\Bigg(\sum_{r=1}^{s}\frac{a_r}{\tau^{\theta_r}}\Bigg)^{-1}\Bigg[
      \sum_{r=1}^{s}\frac{a_r}{\tau^{\theta_r}}||e^{0}||^2_{0,\Omega}\Bigg]\\
     &= ||e^{0}||^2_{0,\Omega},
\end{align*}
and this completes the mathematical induction.
\end{proof}

We have the following convergent results.
\begin{theorem}\label{th06}
Letting $u^n(\textbf{x})$ be the exact solution of Eqs. (\ref{gs01})-(\ref{gs02})
and $U^n(\textbf{x})$ be the approximate solution obtained by the discrete scheme (\ref{gs290}),
we have
\begin{equation}\label{gs70}
  ||u^n(\textbf{x})-U^n(\textbf{x})||_{0,\Omega}\leq C\tau,
\end{equation}
where $C$ is a constant and $n=1,2,3,\ldots$
\end{theorem}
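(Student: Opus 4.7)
The plan is to recycle the energy argument that proved Theorem~\ref{th05}, applied now to the true error $e^n(\textbf{x}):=u^n(\textbf{x})-U^n(\textbf{x})$ and with the added ingredient of a truncation residual from Theorem~\ref{th01}. First I would feed the exact solution into (\ref{gs290}): Theorem~\ref{th01} at $q=1$ reproduces $P_{\theta_1,\ldots,\theta_s}({^C_0}\mathcal{D}_t)u(\textbf{x},t_n)$ by $\sum_r a_r\mathscr{A}_1^{\theta_r}u(\textbf{x},t_n)$ up to a residual $R^n(\textbf{x})$ with $\|R^n\|_{0,\Omega}\le C\tau$ under the stated smoothness of $u$. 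Subtracting (\ref{gs290}) and using $e^0\equiv 0$ (the initial data are exact) yields an error equation identical in structure to the perturbation equation driving Theorem~\ref{th05}, except that the $e^0$-sum drops out and an additive $R^n$ appears on the right-hand side.

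Next I would test this equation against $e^n$ in $L^2(\Omega)$. As in the stability proof, Lemmas~\ref{xl01}--\ref{xl02} identify the spatial bilinear form with the nonnegative energy $|e^n|_{\alpha_l,\Omega}^2$, which is discarded; Cauchy--Schwarz applied to the memory pairings $(e^{n-k},e^n)$ and to $(R^n,e^n)$, followed by division by $\|e^n\|_{0,\Omega}$, gives
\begin{equation*}
 \Bigl(\sum_{r=1}^{s}\tfrac{a_r}{\tau^{\theta_r}}\Bigr)\|e^n\|_{0,\Omega}
  \le \sum_{r=1}^{s}\tfrac{a_r}{\tau^{\theta_r}}\sum_{k=1}^{n-1}(-\omega^{\theta_r}_k)\|e^{n-k}\|_{0,\Omega}+\|R^n\|_{0,\Omega},
\end{equation*}
where each weight $-\omega^{\theta_r}_k>0$ by Lemma~\ref{le01}.

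Finally I would close by a monotone-majorant induction. Setting $E^n:=\max_{1\le k\le n}\|e^k\|_{0,\Omega}$, either $E^n=E^{n-1}$ (nothing new to check) or $\|e^n\|_{0,\Omega}=E^n\ge\|e^{n-k}\|_{0,\Omega}$ for every $k\ge 1$. In the latter case the displayed inequality collapses, via $1+\sum_{k=1}^{n-1}\omega^{\theta_r}_k=\sum_{k=0}^{n-1}\omega^{\theta_r}_k$, to
\begin{equation*}
 \sum_{r=1}^{s}\frac{a_r}{\tau^{\theta_r}}\Bigl(\sum_{k=0}^{n-1}\omega^{\theta_r}_k\Bigr)E^n\le C\tau,
\end{equation*}
and Lemma~\ref{le05} together with $n\tau\le T$ delivers the uniform lower bound
\begin{equation*}
 \sum_{r=1}^{s}\frac{a_r}{\tau^{\theta_r}}\sum_{k=0}^{n-1}\omega^{\theta_r}_k
 \ge\sum_{r=1}^{s}\frac{a_r}{(n\tau)^{\theta_r}\Gamma(1-\theta_r)}
 \ge\frac{a_1}{T^{\theta_1}\Gamma(1-\theta_1)}=:\kappa>0,
\end{equation*}
so that $E^n\le C\tau/\kappa$ and (\ref{gs70}) follows. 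The main obstacle is precisely this last step: the naive estimate $\sum_{k=1}^{n-1}(-\omega^{\theta_r}_k)\le 1$ leaves the induction undetermined, and one must exploit the strictly positive decay of $\sum_{k=0}^{n-1}\omega^{\theta_r}_k$ from Lemma~\ref{le05}, paired with the $1/\tau^{\theta_r}$ prefactor and $n\tau\le T$, to cancel the growth in $n$ and retain the first-order temporal rate.
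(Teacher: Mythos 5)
Your argument is sound in its overall structure and in fact takes a genuinely different (and cleaner) route than the paper for the main case. The paper keeps squared norms throughout: after the energy step it arrives at
$\sum_{r}\frac{a_r}{\tau^{\theta_r}}\|\vartheta^n\|^2_{0,\Omega}\le-\sum_{r}\frac{a_r}{\tau^{\theta_r}}\sum_{k=1}^{n-1}\omega^{\theta_r}_k\|\vartheta^{n-k}\|^2_{0,\Omega}+C_e\|R\|^2_{0,\Omega}$
and then proves by induction the ansatz $\sum_{r}\frac{a_r}{\tau^{\theta_r}}\|\vartheta^n\|^2_{0,\Omega}\le C_e\bigl(\sum_{k=0}^{n-1}\omega^{\theta_m}_k\bigr)^{-1}\|R\|^2_{0,\Omega}$ with $\theta_m=\min_r\theta_r$, finishing with Lemma \ref{le05}. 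You instead divide by $\|e^n\|_{0,\Omega}$ to get a first-power inequality and run a running-maximum argument, which collapses directly to $\sum_{r}\frac{a_r}{\tau^{\theta_r}}\bigl(\sum_{k=0}^{n-1}\omega^{\theta_r}_k\bigr)E^n\le\|R^n\|_{0,\Omega}$; the same Lemma \ref{le05} then closes the estimate. Your version avoids the paper's somewhat delicate replacement of $\bigl(\sum_{k=0}^{n-k-1}\omega^{\theta_m}_j\bigr)^{-1}$ by $\bigl(\sum_{k=0}^{n-1}\omega^{\theta_m}_j\bigr)^{-1}$ inside the induction, and correctly identifies that the whole difficulty is the lower bound on $\sum_{k=0}^{n-1}\omega^{\theta}_k$ paired with the $\tau^{-\theta}$ prefactor and $n\tau\le T$.

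There is, however, one genuine gap: your uniform lower bound $\kappa=a_1/\bigl(T^{\theta_1}\Gamma(1-\theta_1)\bigr)>0$ fails when $\theta_1=1$, since $\Gamma(1-\theta_1)=\Gamma(0)=+\infty$ forces $\kappa=0$; more precisely, for $\theta_r=1$ one has $\sum_{k=0}^{n-1}\omega^{\theta_r}_k=0$ for all $n\ge2$, so every term with $a_r>0$ and $\theta_r=1$ contributes nothing to the coefficient of $E^n$. The theorem's hypotheses allow $\theta_r=1$ (only $a_1>0$ is guaranteed, with $a_r\ge0$ otherwise), so the case in which every active order equals one is inside the statement's scope, and the paper devotes a separate induction (with hypothesis $\sum_r\frac{a_r}{\tau}\|\vartheta^\ell\|^2_{0,\Omega}\le C_e\,\ell\,\|R\|^2_{0,\Omega}$) precisely to it. Your proof is easily repaired there --- for $\theta_r=1$ your first-power inequality degenerates to $\|e^n\|_{0,\Omega}\le\|e^{n-1}\|_{0,\Omega}+C'\tau\|R^n\|_{0,\Omega}$, and telescoping over $n\le T/\tau$ recovers $O(\tau)$ --- but as written the claim $\kappa>0$ is false in that case and the case split must be made explicit. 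A cosmetic point in the same vein: if $\theta_1=1$ but some other $\theta_r<1$ has $a_r>0$, you should take $\kappa$ from that index rather than from $r=1$.
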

\begin{proof}
Let $\vartheta^n=u^n(\textbf{x})-U^n(\textbf{x})$. Subtracting Eq. (\ref{gs290})
from Eqs. (\ref{gs01})-(\ref{gs02}) and using Theorem \ref{th01}, we obtain
\begin{align*}
    &\displaystyle\sum_{r=1}^{s}\frac{a_r}{\tau^{\theta_r}}\vartheta^n
      -\sum_{l=1}^d\varepsilon^+_{\alpha_l}\frac{\partial^{\alpha_l}_+ \vartheta^n}{\partial x^{\alpha_l}_{l,+}} \\
     &  \quad -\sum_{l=1}^d\varepsilon^-_{\alpha_l}\frac{\partial^{\alpha_l}_- \vartheta^n}{\partial x^{\alpha_l}_{l,-}}
     =-\sum_{r=1}^{s}\frac{a_r}{\tau^{\theta_r}}\sum_{k=1}^{n-1}\omega^{\theta_r}_k\vartheta^{n-k}+ R,
\end{align*}
where $\vartheta^0=0$ and $R=\mathscr{O}(\tau)$.
Do the multiplication and integration as before, we have
\begin{align}\label{gs80}
    \sum_{r=1}^{s}\frac{a_r}{\tau^{\theta_r}}||\vartheta^n||^2_{0,\Omega}
     \leq-\sum_{r=1}^{s}\frac{a_r}{\tau^{\theta_r}}\sum_{k=1}^{n-1}\omega^{\theta_r}_k(\vartheta^{n-k},\vartheta^n)
    +(R,\vartheta^n).
\end{align}
In view of the following inequalities
\begin{align*}
  \sum_{k=1}^{n-1}&\omega^{\theta_r}_k(\vartheta^{n-k},\vartheta^n)\geq
         \sum_{k=1}^{n-1}\omega^{\theta_r}_k\frac{||\vartheta^{n-k}||^2_{0,\Omega}+||\vartheta^n||^2_{0,\Omega}}{2},
\end{align*}
\begin{align*}
\begin{aligned}
  &(R,\vartheta^n)\leq \sum_{r=1}^{s}\frac{a_r}{2\tau^{\theta_r}}\sum_{k=0}^{n-1}\omega^{\theta_r}_k||\vartheta^n||^2_{0,\Omega}
         +\frac{||R||^2_{0,\Omega}}{2\displaystyle\sum_{r=1}^{s}\frac{a_r}{\tau^{\theta_r}}\sum_{k=0}^{n-1}\omega^{\theta_r}_k}\\
  &\quad \leq \sum_{r=1}^{s}\frac{a_r}{2\tau^{\theta_r}}\sum_{k=0}^{n-1}\omega^{\theta_r}_k||\vartheta^n||^2_{0,\Omega}+
         \frac{||R||^2_{0,\Omega}}{\displaystyle\sum_{r=1}^{s}\frac{2a_r}{\tau^{\theta_r}n^{\theta_r}\Gamma{(1-\theta_r)}}}  \\
  &\quad \leq \sum_{r=1}^{s}\frac{a_r}{2\tau^{\theta_r}}\sum_{k=0}^{n-1}\omega^{\theta_r}_k||\vartheta^n||^2_{0,\Omega}
           +\frac{||R||^2_{0,\Omega}}{\displaystyle\sum_{r=1}^{s}\frac{2a_r}{T^{\theta_r}\Gamma{(1-\theta_r)}}},
\end{aligned}
\end{align*}
we further obtain
\begin{align*}
   & \sum_{r=1}^{s}\frac{a_r}{\tau^{\theta_r}}||\vartheta^n||^2_{0,\Omega} \\
   &\quad  \leq-\sum_{r=1}^{s}\frac{a_r}{2\tau^{\theta_r}} \sum_{k=1}^{n-1}\omega^{\theta_r}_k
        (||\vartheta^{n-k}||^2_{0,\Omega}+||\vartheta^n||^2_{0,\Omega}) \nonumber \\
   &\quad\ \ + \sum_{r=1}^{s}\frac{a_r}{2\tau^{\theta_r}}\sum_{k=0}^{n-1}\omega^{\theta_r}_k||\vartheta^n||^2_{0,\Omega}
           +\frac{||R||^2_{0,\Omega}}{\displaystyle\sum_{r=1}^{s}\frac{2a_r}{T^{\theta_r}\Gamma{(1-\theta_r)}}} \nonumber \\
   &\quad \leq-\sum_{r=1}^{s}\frac{a_r}{2\tau^{\theta_r}} \sum_{k=1}^{n-1}\omega^{\theta_r}_k
        ||\vartheta^{n-k}||^2_{0,\Omega} \nonumber \\
   &\quad\ \ + \sum_{r=1}^{s}\frac{a_r}{2\tau^{\theta_r}}||\vartheta^n||^2_{0,\Omega}
           +\frac{||R||^2_{0,\Omega}}{\displaystyle\sum_{r=1}^{s}\frac{2a_r}{T^{\theta_r}\Gamma{(1-\theta_r)}}} \nonumber
\end{align*}
by using Lemma \ref{le05} and substituting the above inequalities into (\ref{gs80}).
Then, there exists
\begin{align}\label{gs85}
\begin{aligned}
   & \sum_{r=1}^{s}\frac{a_r}{\tau^{\theta_r}}||\vartheta^n||^2_{0,\Omega} \\
   &\quad \leq-\sum_{r=1}^{s}\frac{a_r}{\tau^{\theta_r}} \sum_{k=1}^{n-1}\omega^{\theta_r}_k
        ||\vartheta^{n-k}||^2_{0,\Omega}
           +C_e||R||^2_{0,\Omega},
\end{aligned}
\end{align}
where $C_e=1\big/\sum_{r=1}^{s}\frac{a_r}{T^{\theta_r}\Gamma{(1-\theta_r)}}$.

Let $a_m=\min\{a_1,a_2,\cdots,a_s\}$,  $\theta_m=\min\{\theta_1,\theta_2,\cdots,\theta_s\}$,
and continue to prove the following inequality
\begin{equation}\label{87}
  \sum_{r=1}^{s}\frac{a_r}{\tau^{\theta_r}}||\vartheta^n||^2_{0,\Omega} \leq C_e
    \Bigg(\sum_{k=0}^{n-1}\omega^{\theta_m}_k\Bigg)^{-1}||R||^2_{0,\Omega}
\end{equation}
via the mathematical induction to achieve the error bound (\ref{gs70}). If $0<\theta_m<1$, then for
$n=1$, we can easily check its correctness. Supposing
\begin{equation}\label{88}
  \sum_{r=1}^{s}\frac{a_r}{\tau^{\theta_r}}||\vartheta^\ell||^2_{0,\Omega} \leq C_e
    \Bigg(\sum_{k=0}^{\ell-1}\omega^{\theta_m}_k\Bigg)^{-1}||R||^2_{0,\Omega}
\end{equation}
with $\ell=1,2,\cdots,n-1$, for $\ell=n$, we can prove
\begin{align*}
   & \sum_{r=1}^{s}\frac{a_r}{\tau^{\theta_r}}||\vartheta^n||^2_{0,\Omega} \\
   &\quad \leq-\sum_{r=1}^{s}\frac{a_r}{\tau^{\theta_r}} \sum_{k=1}^{n-1}\omega^{\theta_r}_k
        ||\vartheta^{n-k}||^2_{0,\Omega}+C_e||R||^2_{0,\Omega}  \\
   &\quad \leq-C_e\sum_{k=1}^{n-1}\omega^{\theta_m}_k\Bigg(\sum_{k=0}^{n-k-1}\omega^{\theta_m}_k\Bigg)^{-1}
        ||R||^2_{0,\Omega}+C_e||R||^2_{0,\Omega}  \\
   &\quad \leq-C_e\sum_{k=1}^{n-1}\omega^{\theta_m}_k\Bigg(\sum_{k=0}^{n-1}\omega^{\theta_m}_k\Bigg)^{-1}
        ||R||^2_{0,\Omega}+C_e||R||^2_{0,\Omega}  \\
   &\quad \leq C_e\Bigg(1-\sum_{k=0}^{n-1}\omega^{\theta_m}_k\Bigg)\Bigg(\sum_{k=0}^{n-1}\omega^{\theta_m}_k\Bigg)^{-1}
        ||R||^2_{0,\Omega}+C_e||r^n||^2_{0,\Omega}  \\
   &\quad \leq C_e\Bigg(\sum_{k=0}^{n-1}\omega^{\theta_m}_k\Bigg)^{-1}||R||^2_{0,\Omega},
\end{align*}
with the aid of (\ref{gs85}) and $-1<\omega_k^{\theta_r}<0$ for $k\neq0$.
Further, letting $\tau<1$ and using Lemma \ref{le05}, we have
\begin{align*}
   \frac{sa_m}{\tau^{\theta_m}}||\vartheta^n||^2_{0,\Omega}\!\!\leq\!\!
    \sum_{r=1}^{s}\frac{a_r}{\tau^{\theta_r}}||\vartheta^n||^2_{0,\Omega}
   \!\!\leq\!\! C_e n^{\theta_m}\Gamma(1-\theta_m)||R||^2_{0,\Omega}.
\end{align*}
After simple operations, we obtain
\begin{align*}
   ||\vartheta^n||^2_{0,\Omega}\leq \frac{C_e T^{\theta_m}\Gamma(1-\theta_m)}{sa_m}||R||^2_{0,\Omega}\leq C\tau^2.
\end{align*}

On the other hand, if $\theta_m=1$, then $\theta_1=\theta_2=\cdots=\theta_s=1$
and the above inequality fails to evaluate the error estimate because $\Gamma(1-\theta_m)$ is infinity
when $\theta_m$ is infinitely close to 1. Therefore, other form is needed to be found. Supposing
\begin{equation}\label{xzz06}
  \sum_{r=1}^{s}\frac{a_r}{\tau}||\vartheta^\ell||^2_{0,\Omega} \leq C_e \ell||R||^2_{0,\Omega},
\end{equation}
with $\ell=1,2,\cdots,n-1$, and noticing (\ref{xzz06}) is correct for $n=1$, we check
by (\ref{gs85}) that (\ref{xzz06}) holds for $\ell=n$, i.e.,
\begin{align*}
   & \sum_{r=1}^{s}\frac{a_r}{\tau}||\vartheta^n||^2_{0,\Omega} \\
   &\qquad \leq-\sum_{r=1}^{s}\frac{a_r}{\tau} \sum_{k=1}^{n-1}\omega^{\theta_r}_k
          ||\vartheta^{n-k}||^2_{0,\Omega}+C_e||R||^2_{0,\Omega},\\
   &\qquad \leq-C_e\sum_{k=1}^{n-1}\omega^{\theta_m}_k(n-k)||R||^2_{0,\Omega}+C_e||R||^2_{0,\Omega}\\
   &\qquad \leq C_e\Bigg(1-\sum_{k=0}^{n-1}\omega^{\theta_m}_k\Bigg)(n-k)||R||^2_{0,\Omega}+C_e||R||^2_{0,\Omega}\\
   &\qquad \leq C_e(n-1)||R||^2_{0,\Omega}+C_e||R||^2_{0,\Omega}\\
   &\qquad = C_en||R||^2_{0,\Omega},
\end{align*}
which further yields
\begin{align*}
   ||\vartheta^n||^2_{0,\Omega}\leq \frac{C_e T}{sa_m}||R||^2_{0,\Omega} \leq C\tau^2,
\end{align*}
and this finally completes the proof.
\end{proof}

\section{Spatial discretization by RBFs-based DQ method} \label{s4}

\subsection{Discretization of fractional derivatives in space} \label{s3}
\indent

{\color{red}Supposing $u(\textbf{x})\in C^m(\Omega)$, $m\in \mathbb{Z}^+$,} we have the following DQ formula \cite{dq40}:
\begin{align}\label{xxzz01}
    u^{(m)}_{x_l}(\textbf{x}_i,t)\approx\sum\limits_{j=0}^M {w_{ij}^{m_l}u(\textbf{x}_j,t)},\ \ i=0,1,\ldots, M,
\end{align}
where $1\leq l\leq d$, $w_{ij}^{m_l}$, $i,j=0,1,\ldots,M$, are the weighted coefficients. Realizing its essence
and a function in the linear space $V_h$ spanned by
$\{\phi_k(\textbf{x})\}_{k=0}^M$ can be approximately expanded
as a weighted sum of $\{\phi_k(\textbf{x})\}_{k=0}^M$, i.e.,
$u(\textbf{x},t)\approx\sum_{k=0}^{M}\delta_k(t)\phi_k(\textbf{x})$, \ $\textbf{x}\in \Omega\subset\mathbb{R}^d$.
Then for $\frac{\partial^{\alpha}_+ u(\textbf{x},t)}{\partial x^{\alpha}_{l,+}}$,
we therefore raise the following DQ formulas for the left-hand fractional derivatives:
\begin{align}\label{gs09}
    \frac{\partial^{\alpha}_+ u(\textbf{x}_i,t)}{\partial x^{\alpha}_{l,+}} &\approx\sum_{k=0}^M\delta_k(t) \frac{\partial^{\alpha}_+
        \phi_k(\textbf{x}_i)}{\partial x^{\alpha}_{l,+}} \\
       &=\sum_{k=0}^M\delta_k(t)\sum\limits_{j=0}^M {w_{ij}^{\alpha^+_l}
       \phi_k(\textbf{x}_j)}\approx \sum\limits_{j=0}^M {w_{ij}^{\alpha^+_l}u(\textbf{x}_j,t)},\nonumber
\end{align}
in which, $w_{ij}^{\alpha^+_l}$, $i,j=0,1,\ldots,M$, fulfill
\begin{align}
\frac{\partial^{\alpha}_+ \phi_k(\textbf{x}_i)}{\partial x^{\alpha}_{l,+}}
     =\sum\limits_{j=0}^M {w_{ij}^{\alpha^+_l}\phi_k(\textbf{x}_j)},\ \ i,k=0,1,\ldots, M,\label{gs12}
\end{align}
with $\{\phi_k(\textbf{x})\}_{k=0}^M$ being the test functions. 
The ones for the fractional derivatives {\color{red}in other coordinate directions can be defined analogously}.

Since the interpolation matrices of Inverse Multiquadrics, Inverse Quadratics {\color{red}and Gaussians are
invertible,} eliminate the polynomial term in Eq. (\ref{gs05}) and let $\phi_k(\textbf{x})=\varphi(r_{k})$ in
Eq. (\ref{gs12}). As for Multiquadrics, we employ
\begin{equation*}
u(\textbf{x},t)\approx\sum_{i=0}^{M}\lambda_i(t)\varphi(r_i)+\mu_{1}(t).
\end{equation*}
Besides, from Eq. (\ref{gs06}), it follows that
$\lambda_0(t)=-\sum_{i=1}^{M}\lambda_i(t)$, which leads to
\begin{equation*}
    u(\textbf{x},t)\approx\sum_{i=1}^{M}\lambda_i(t)\big\{\varphi(r_i)-\varphi(r_0)\big\}+\mu_{1}(t).
\end{equation*}

By careful observation, we regard $\phi_0(\textbf{x})=1$, $\phi_i(\textbf{x})=\varphi(r_i)-\varphi(r_0)$,
$i=1,2,\ldots,M$, as test functions. {\color{red}After these discussions,} we can get
the weighted coefficients via a linear system of $M+1$ equations
resulting from the above equations for the given $\{\textbf{x}_i\}_{i=0}^M$.


\subsection{Fully discrete RBFs-based DQ scheme}
\indent

{\color{red}In this subsection,  we develop} a fully discrete DQ scheme for the multi-term TSFPDEs, which
utilizes the operator $\mathscr{A}^{\theta}_q$  to handle the Caputo derivative in time and the RBFs-based
DQ formulas to discretize the fractional derivatives in space.
Let $\{\textbf{x}_i\}_{i=0}^{M}$ be a set of nodes in $\Omega\subset \mathbb{R}^d$.
Replacing the space-fractional derivatives by RBFs-based DQ formulas in Eq. (\ref{gs28}), we have
\begin{equation}\label{gs280}
\begin{small}
\begin{aligned}
  &\sum_{r=1}^{s}a_r\mathscr{A}^{\theta_r}_qu(\textbf{x}_i,t_n)-\sum_{l=1}^d
    \varepsilon^+_{\alpha_l}(\textbf{x}_i)\sum\limits_{j=0}^M {w_{ij}^{\alpha_l^+}u(\textbf{x}_j,t_n)} \\
  &  -\sum_{l=1}^d\varepsilon^-_{\alpha_l}(\textbf{x}_i)\sum\limits_{j=0}^M {w_{ij}^{\alpha_l^-}u(\textbf{x}_j,t_n)}
       =f(\textbf{x}_i,t_n)+\mathscr{O}(\tau^q).
\end{aligned}
\end{small}
\end{equation}
where $i=0,1,\ldots,M$. Omitting $\mathscr{O}(\tau^q)$ and enforcing Eq. (\ref{gs280})
to be exactly ture at $\{\textbf{x}_i\}_{i=0}^{M}$, we obtain the following fully discrete DQ scheme:
\begin{align}\label{gs29}
\left\{
     \begin{array}{lll}
    &\displaystyle\sum_{r=1}^{s}\frac{a_r\omega^{q,\theta_r}_0}{\tau^{\theta_r}}u(\textbf{x}_i,t_{n})
      -\sum_{l=1}^d\varepsilon^+_{\alpha_l}(\textbf{x}_i)\sum\limits_{j=0}^M {w_{ij}^{\alpha_l^+}u(\textbf{x}_j,t_n)}\\
     & \displaystyle\quad -\sum_{l=1}^d\varepsilon^-_{\alpha_l}(\textbf{x}_i)\sum\limits_{j=0}^M {w_{ij}^{\alpha_l^-}u(\textbf{x}_j,t_n)}
      =f(\textbf{x}_i,t_n) \\
     & \displaystyle\quad-\sum_{r=1}^{s}\frac{a_r}{\tau^{\theta_r}}\sum_{k=1}^{n-1}\omega^{q,\theta_r}_ku(\textbf{x}_i,t_{n-k}) \\
     & \displaystyle\quad +\sum_{r=1}^{s}\frac{a_r}{\tau^{\theta_r}}\sum_{k=0}^{n-1}\omega^{q,\theta_r}_ku(\textbf{x}_i,0),
       \quad (\textbf{x}_i;t_n)\in\Omega\times(0,T],\\
   & u(\textbf{x}_i,0)=u_0(\textbf{x}_i),\quad \textbf{x}_i\in\Omega, \\
   & u(\textbf{x}_i,t_n)=g(\textbf{x}_i,t_n), \quad (\textbf{x}_i;t_n)\in\partial\Omega\times(0,T],
             \end{array}
        \right.
\end{align}
where $i=0,1,\ldots,M$ and $n=1,2,\ldots,N$.

For the ease of expression, we employ the notations $U_i^n=u(\textbf{x}_i,t_{n})$, $\varepsilon^\pm_{\alpha_l,i}=\varepsilon^\pm_{\alpha_l}(\textbf{x}_i)$,
$f_i^n=f(\textbf{x}_i,t_{n})$, $g_i^n=g(\textbf{x}_i,t_n)$, and $\textbf{U}^n$, $\textbf{F}^n$, $\textbf{g}^n$ are the
column vectors consisting of $U^n_i$, $f^n_i$ and $g^n_i$ in the ascending order of subscript $i$, respectively. Also, we adopt
$\boldsymbol{\varepsilon}^\pm_{\alpha_l}=diag(\varepsilon^\pm_{\alpha_l,0},\varepsilon^\pm_{\alpha_l,1},
\ldots,\varepsilon^\pm_{\alpha_l,M})$. Reforming Eqs. (\ref{gs29}) in a matrix-vector form finally leads to
\begin{align}\label{gs30}
\left\{
     \begin{array}{lll}
    \displaystyle\sum_{r=1}^{s}\frac{a_r\omega^{q,\theta_r}_0}{\tau^{\theta_r}}\textbf{U}^n
      -\sum_{l=1}^d\boldsymbol{\varepsilon}^+_{\alpha_l} \textbf{W}_{\alpha_l}^+\textbf{U}^n
      -\sum_{l=1}^d\boldsymbol{\varepsilon}^-_{\alpha_l}\textbf{W}_{\alpha_l}^-\textbf{U}^n\\
    \displaystyle\ {\color{red}=\textbf{F}^n\!-\!\sum_{r=1}^{s}\frac{a_r}{\tau^{\theta_r}}\sum_{k=1}^{n-1}\omega^{q,\theta_r}_k\textbf{U}^{n-k}
       \!+\!\sum_{r=1}^{s}\frac{a_r}{\tau^{\theta_r}}\sum_{k=0}^{n-1}\omega^{q,\theta_r}_k\textbf{U}^0, }\\
    U^n_i=g^n_i, \quad \textrm{for}\ \ \textbf{x}_i \in \partial\Omega,
             \end{array}
        \right.
\end{align}
where $n=0,1,\ldots,N$ and $\textbf{W}_{\alpha_l}^\pm$ are the weighted coefficient matrices, given by
\begin{align*}
\textbf{W}_{\alpha_l}^\pm=\left( \begin{array}{cccc}
w^{\alpha_l^\pm}_{00}&w^{\alpha_l^\pm}_{01} &\cdots &w^{\alpha_l^\pm}_{0,M}\\
w^{\alpha_l^\pm}_{10}&w^{\alpha_l^\pm}_{11} &\cdots &w^{\alpha_l^\pm}_{1,M}  \\
 \vdots &\vdots &\ddots&\vdots \\
w^{\alpha_l^\pm}_{M,0}&w^{\alpha_l^\pm}_{M,1}&\cdots &w^{\alpha_l^\pm}_{M,M}
\end{array} \right), \quad l=1,2,\ldots,d.
\end{align*}

In addition, before we can get the numerical solutions, another problem having to be
addressed is how to compute the fractional derivatives of RBFs and determine the corresponding integral paths.
It is acknowledged that the fractional derivative of a general function
like RBFs is quite difficult to compute. We will be engaged in this point.
For a node $\textbf{x}_i\in \Omega$ and the RBF $\varphi(r_i)$, letting
$\xi=x_l-\big(x_l-X_{l,L}\big)(1+\zeta)/2$, one has
\begin{align*}
\begin{small}
\begin{aligned}
\frac{\partial^{\alpha}_+\varphi_k(r_i)}{\partial x^{\alpha}_{l,+}}&=\frac{1}{\Gamma(2-\alpha)}
     \Bigg(\frac{x_l-X_{l,L}}{2}\Bigg)^{2-\alpha}\int^1_{-1}\Bigg\{(1+\zeta)^{1-\alpha} \\
    &\quad \cdot\frac{\partial^2\varphi_k\big(r_i|_{x_l=\xi}\big)}{\partial \xi^2}\bigg|_{\xi=x_l-(x_l-X_{l,L})(1+\zeta)/2}\Bigg\}d\zeta,
\end{aligned}
\end{small}
\end{align*}
where $X_{l,L}=\min\big\{\eta_l: (\eta_1,\eta_2,\ldots,\eta_d),\eta_i=x_i,i\neq l\big\}$.
Then, it can be tackled by Gauss-Jacobi quadrature formula \cite{da51}.
The line segment along the $x_l$-coordinate axis from the left-side of boundary to the node $\textbf{x}_i$
is called the integral path, which need to be determined before the calculation of fractional derivatives.

{\color{red}Assume the space $V_h$ is spanned by the RBFs used as test functions,  then all functions
in $V_h$ can be approximated by RBFs interpolation. If Eq. (\ref{gs12}) is satisfied, we can
examine Eq. (\ref{gs09}) by the linearity of fractional calculus, i.e., when the RBFs-based DQ formula is applied in the
discretization of a fractional PDE in space, the solution can be effectively approximated by Eq. (\ref{gs09}),
and therefore the space discretized method is convergent. }
By solving the algebraic equations resulting from the RBFs-based DQ scheme (\ref{gs30}),
we obtain the desirable solutions for the considered problems at each time layer through an iterative procedure.
From the constructive process of the scheme, we observe that the treatment of boundary conditions
is pretty simple and no special treatment is required, which is one of the advantages of our approach.
{\color{red}
\begin{remark}
The other RBFs-based methods like Kansa's method require to expand the unknown function into the form of
RBFs interpolation, then put it into the considered equation and enforce this equation holds at collocation
nodes so as to determine the RBFs weights. Our DQ method is not pure RBFs collocation method since
the numerical solution at the given nodes are directly obtained by the scheme (\ref{gs30}). We do not
have to compute these RBFs weights like Kansa's method and do not need to rearrange the function in the form of
RBFs interpolation at last either. It is actually a simple algorithm, which not only has the advantages of high accuracy,
small computation amount, but only can easily be applied to the nonlinear fractional problems.
\end{remark} }

\section{Illustrative examples} \label{s5}
\indent

{\color{red}
Let $u^n_{i}$, $U^n_{i}$ be the analytical} and numerical solutions at the time level $n$. Denoting
\begin{align*}
  &||u-U||_{L^2}=\Bigg[h^*\sum_{i=0}^M\big(u_{i}^{n}-U_{i}^{n}\big)^2\Bigg]^{\frac{1}{2}},\ \ h^*=\frac{1}{M+1},\\
  &||u-U||_{L^\infty}=\max\limits_{0\leq i\leq M}\big|u_{i}^n-U_{i}^n\big|,
\end{align*}
some illustrative examples will be performed to reveal the validity and convergence of the proposed
RBFs-based DQ method on different domains. We test our codes by choosing the values of {\color{red}the shape parameter $c$ with reference
to the formula $c=\nu/(M+1)^{\sigma/4}$, where $\nu>0$, $\sigma\in \mathbb{Z}^+$ are the user-choosable parameters.}
We compute the convergent rates by
\begin{align*}
Cov.\ rate=\left\{
\begin{aligned}
&\frac{\log_2\Big(||u-U||_{L^\nu}/||u-U||^*_{L^\nu}\Big)}{\log_2(\tau_2/\tau_1)},\quad \textrm{in time},\\
&\frac{\log_2\Big(||u-U||_{L^\nu}/||u-U||^*_{L^\nu}\Big)}{\log_2(M_2/M_1)},\quad \textrm{in space},
\end{aligned}
\right.
\end{align*}
where $\nu=2,\ \infty$, $\tau_1$, $\tau_2$, $M_1$, $M_2$ are the temporal stepsizes and nodal parameters,
$||u-U||_{L^\nu}$, $||u-U||_{L^\nu}^*$ are the numerical errors, which correspond to $\tau_1$, $M_1$ and
$\tau_2$, $M_2$, respectively. $\tau_1$, $\tau_2$, $M_1$, and $M_2$ should satisfy the conditions $\tau_1\neq \tau_2$ and $M_1\neq M_2$.

{\color{red}The nodal distribution has an important influence on the computational results of DQ methods,
and in general, equispaced nodal distribution is often not adopted
because irregular nodal distribution would be more helpful to obtain fast convergent speed and high accuracy \cite{rbf10}.
Here, we tend to adopt the shifted Chebyshev-Gauss-Lobatto nodes $x_i=\big(1-\cos\frac{i\pi}{M}\big)(b-a)/2+a$ as done by \cite{dq40},
$i=0,1,\cdots,M$, for one-dimensional problems, where $\Omega=[a,b]$, since it is commonly used.
For high-dimensional problems, we should avoid using the nodal distribution which
allows possible nodes overlapping because it may result in an ill-conditioned algebraic equations for interpolation and
numerical solutions based on such nodes are instable, i.e., the uniform irregular nodal distribution is perferred.}

{\color{red}In the sequel,} we will numerically study the convergent accuracy of the proposed RBFs-based DQ method in time and space by comparing with
some existing algorithms like FD and FE methods. According to the theoretical analysis, we anticipate the errors yielded by
the DQ method would show a gradual decline as $M$ increases
and the convergent rate in time would strictly be $q$ as $\tau$ decreases. Moreover, all the numerical tests are
carried out by MATLAB R2014a on a personal PC with WIN 7 Pro., AMD Athlon(tm) II $\times2$ 250
3.00 GHz Processor and 4 GB DDR3 RAM.

\subsection{One-dimensional problems}
\noindent
\textbf{Example 6.1.}  To test the validity of the DQ formula, {\color{blue}we firstly discretize
${^C_{-1}}D_x^{\alpha}(x+1)^2$ on $[-1,1]$ by}
Gr\"{u}nwald-Letnikov (GL) difference operator and DQ formula. 
We choose $\alpha=1.5$ and Inverse Multiquadrics as the test functions with {\color{red}the free parameters}
$\nu=1.3$ and $\sigma=1$. The comparison of numerical errors and convergent rates
for these two methods are shown in Table \ref{tab2}. It is clear from this table that the convergent rates of
GL-operator {\color{blue}are nearly 1} throughout the whole computation, while DQ formula has far better performance than GL-operator on aspects of both
convergent rates and mean square errors. We then come to the
conclusion that our DQ formulas {\color{blue}are more efficient} than GL-operator.
{\color{blue}Secondly, we consider the function with
low smoothness and discretize ${^C_{-1}}D_x^{\alpha}(x+1)^\mu$ on $[-1,1]$ by retaking $\alpha=1.1$ and $\nu=0.9$.
As $\mu$ decreases, the regularity of the function becomes weaker and its fractional derivative shows
the singularity at $x=-1$ if $\mu=\alpha$. The convergent results with different $\mu$ are given in Table \ref{tab2b}.
It can be seen that the DQ formula is still effective when $\mu=\alpha$, but with the decreasing of $\mu$,
its convergent speed would decrease gradually.} \\

\begin{table*}[!htb]
\centering
\caption{The comparison of errors for GL-operator and DQ formula when $\alpha=1.5$.}\label{tab2}
{\color{blue}\begin{tabular}{lccccc}
\toprule
\multicolumn{1}{l}{\multirow{2}{0.6cm}{$M$}} &\multicolumn{2}{l}{GL-operator} &\multicolumn{2}{l}{DQ formula} \\
\cline{2-5}& $||u-U||_{L^2}$  &$Cov.\ rate$  &$||u-U||_{L^2}$  & $Cov.\ rate$ \\
\midrule 10    &5.8888e-02  &-    &  1.8198e-01  &-      \\
         15    &4.3139e-02  &0.77 &  3.0272e-02  &4.42   \\
         20    &3.4252e-02  &0.80 &  7.5563e-03  &4.82   \\
         25    &2.8513e-02  &0.82 &  2.4133e-03  &5.12   \\
\bottomrule
\end{tabular}  }
\end{table*}

\begin{table*}[!htb]
\centering
\caption{{\color{blue}The convergent results of DQ formula for ${^C_{-1}}D_x^{1.1}(x+1)^\mu$ with weak regularity.}}\label{tab2b}
{\color{blue}\begin{tabular}{lccccccc}
\toprule
\multicolumn{1}{l}{\multirow{2}{0.6cm}{$M$}} &\multicolumn{2}{c}{$\mu=1.5$} &\multicolumn{2}{c}{$\mu=1.3$}
                &\multicolumn{2}{c}{$\mu=1.1$}\\
\cline{2-7}& $||u-U||_{L^2}$  &$Cov.\ rate$  &$||u-U||_{L^2}$  & $Cov.\ rate$ &$||u-U||_{L^2}$  & $Cov.\ rate$ \\
\midrule 10    &1.7201e-01  &-    &  3.2198e-01  &-    &  6.4756e-01  &-  \\
         15    &9.4085e-02  &1.49 &  2.4504e-01  &0.67 &  6.1528e-01  &0.13  \\
         20    &6.6809e-02  &1.19 &  2.0472e-01  &0.63 &  5.8697e-01  &0.16  \\
         25    &5.8687e-02  &0.58 &  1.8525e-01  &0.45 &  5.6680e-01  &0.16 \\
\bottomrule
\end{tabular}  }
\end{table*}

{\color{red}
\noindent
\textbf{Example 6.2.} Consider the one-dimensional space-\!\! fractional diffusion equation:
\begin{align*}
\left\{
             \begin{array}{lll}
     &\displaystyle\frac{\partial u(x,t)}{\partial t}-\frac{\Gamma(5-\alpha)x^\alpha}{4}\frac{\partial^{\alpha}_+ u(x,t)}{\partial x^{\alpha}_+}
       = -28e^{-t}x^2(2-x)^2\\
     &\quad\displaystyle -8e^{-t}x^2(\alpha(\alpha-7)+3\alpha x) \quad (x;t)\in[0,2]\times(0,T], \\
     &  u(x,0)=4x^2(2-x)^2,\quad x\in[0,2], \\
     &  u(0,t)=u(2,t)=0, \ t\in(0,T],
             \end{array}
        \right.
\end{align*}
with $\alpha=1.8$ and its analytical solution being $u(x,t)=4e^{-t}x^2(2-x)^2$.

We discuss the the influence of nodal distributions on the computational results of DQ method and compare the
errors yielded by the RBFs-based collocation and DQ methods. For this purpose,
selecting Multiquadrics as the test functions and letting $\nu=0.4$, $\sigma=1$ and $q=1$,
we firstly run the algorithm on equispaced nodal distributions and then run the algorithm by shifted
Chebyshev-Gauss-Lobatto nodes with the same discrete parameters. We compare
the convergent results in space at $t=1$ with those obtained by the RBFs collocation method \cite{rbf11},
where $\tau=1.6667\times10^{-3}$, and all the results are presented in Table \ref{tabq1}.

From this table, we observe that the errors computed based on the shifted Chebyshev-Gauss-Lobatto nodes
are smaller than those based on equispaced nodes and the accuracy of RBFs-based DQ method
is higher than that of collocation technique. All of these phenomena tells us that
the nodal distribution has some effect on the numerical results
and our DQ method can be more accurate than RBFs collocation approach. } \\

\begin{table*}[!htb]
\centering
\caption{{\color{red}The comparison of mean-square errors for RBFs-based collocation and DQ methods at $t=1$.}}\label{tabq1}
{\color{red}\begin{tabular}{lccccccc}
\toprule
\multicolumn{1}{l}{\multirow{3}{0.6cm}{$M$}} & \multicolumn{2}{c}{\multirow{2}{4.2cm}{RBFs collocation method \cite{rbf11}}}
                            &\multicolumn{4}{c}{ MQ-based DQ method} \\
 \cline{4-7}     & &    &\multicolumn{2}{l}{ equispaced nodes} &\multicolumn{2}{l}{shifted Chebyshev-Gauss-Lobatto nodes} \\
\cline{2-7}      &$||u-U||_{L^2}$  &$Cov.\ rate$& $||u-U||_{L^2}$  &$Cov.\ rate$  &$||u-U||_{L^2}$  & $Cov.\ rate$ \\
\midrule  3    &1.28      &-    &  1.7163e-01  &-      &4.0201e-01  &- \\
          7    &3.87e-02  &5.05 &  5.0342e-02  &1.45   &1.3893e-01  &1.25 \\
         15    &2.08e-03  &4.22 &  1.9651e-02  &1.23   &9.0354e-03  &3.59 \\
         31    &6.37e-04  &1.71 &  7.4084e-03  &1.34   &4.6974e-04  &4.07 \\
\bottomrule
\end{tabular}  }
\end{table*}

\noindent
\textbf{Example 6.3.} Consider the one-dimensional multi-term TSFDE:
{\color{blue}\begin{align*}
\left\{
             \begin{array}{lll}
     &\displaystyle\sum_{r=1}^{4}a_r{^C_0}D_t^{\theta_r}u(x,t)\!\!-\!\!\frac{\partial^{\alpha}_+ u(x,t)}{\partial x^{\alpha}_+}
       \!\!=\!\! \sum_{r=1}^{4}\frac{a_r\Gamma{(\mu+1)}t^{\mu-\theta_r}x^4}{\Gamma{(\mu+1-\theta_r)}} \\
     &\quad\displaystyle  -\frac{24t^\mu x^{4-\alpha}}{\Gamma(5-\alpha)},  \quad (x;t)\in[0,1]\times(0,T], \\
     &  u(x,0)=0,\quad x\in[0,1], \\
     &  u(0,t)=0, \ u(1,t)=t^\mu, \ t\in(0,T],
             \end{array}
        \right.
\end{align*}
with $a_1=a_2=a_3=a_4=1$, $\theta_1=0.3$, $\theta_2=0.5$, $\theta_3=0.7$, $\theta_4=0.9$ and $\alpha=1.2$.

The analytical solution is $u(x,t)=t^\mu x^4$.} Using Multiquadrics and Inverse Quadratics as the test functions and
choosing {\color{red}the corresponding free parameters} {\color{blue}$\nu=0.1$, $\sigma=1$, $\mu=1$ and $\nu=0.3$, $\sigma=1$,}
respectively, the convergent results in space at $t=0.5$ with
$q=1$, $\tau=5.0\times10^{-5}$ are listed in Table \ref{tab3}.
The comparison of numerical, analytical solutions and the resulting absolute error yielded
by IQ-based DQ method at $t=0.5$ when $M=25$ are plotted in Fig. \ref{fig0}.
As expected, we obviously see that the errors are quite small and the convergent rates are almost up to 4,
which implies that the proposed DQ method achieves high accuracy by using a small number of nodes.
{\color{red}Besides, the convergent rate of IQ-based DQ method seems higher than that of MQ-based DQ method in this problem}
{\color{blue}and the results in the figure show good consistency between the numerical and analytical solutions.}
Furthermore, we examine the convergent rates of MQ-based DQ method
in time with different $q$. For this purpose, {\color{blue}retaking $\nu=0.15$, $\sigma=1$, $\mu=4$, $q=1,2,3,4$, and $M=50$,} the numerical errors and
corresponding convergent rates at $t=0.5$ are all shown in Table \ref{tab4}. From the data of this table, we observe that
the convergent rates are nearly $\mathscr{O}(\tau^q)$ in time,
{\color{red}which are basically} what we anticipated in theory.

\begin{table*}[!htb]
\centering
\caption{The convergent results at $t=0.5$ with $q=1$, $\tau=5.0\times10^{-5}$ and different $M$ for Example 6.3.}\label{tab3}
{\color{blue}\begin{tabular}{cccccc}
\toprule
\multicolumn{1}{l}{\multirow{2}{0.6cm}{$M$}} &\multicolumn{2}{l}{ MQ-based DQ method}
     &\multicolumn{2}{l}{IQ-based DQ method,\ } \\
\cline{2-5}& $||u-U||_{L^2}$  &$Cov.\ rate$  &$||u-U||_{L^2}$  & $Cov.\ rate$ \\
\midrule 10    &  1.4636e-03  &-      &2.3080e-03  &- \\
         15    &  3.9252e-04  &3.25   &5.2202e-04  &3.67 \\
         20    &  1.4515e-04  &3.46   &1.5203e-04  &4.29 \\
         25    &  5.8617e-05  &4.06   &5.1782e-05  &4.83 \\
\bottomrule
\end{tabular} }
\end{table*}

\begin{figure*}[!htb]
\begin{minipage}[t]{0.46\linewidth}
\includegraphics[width=2.5in]{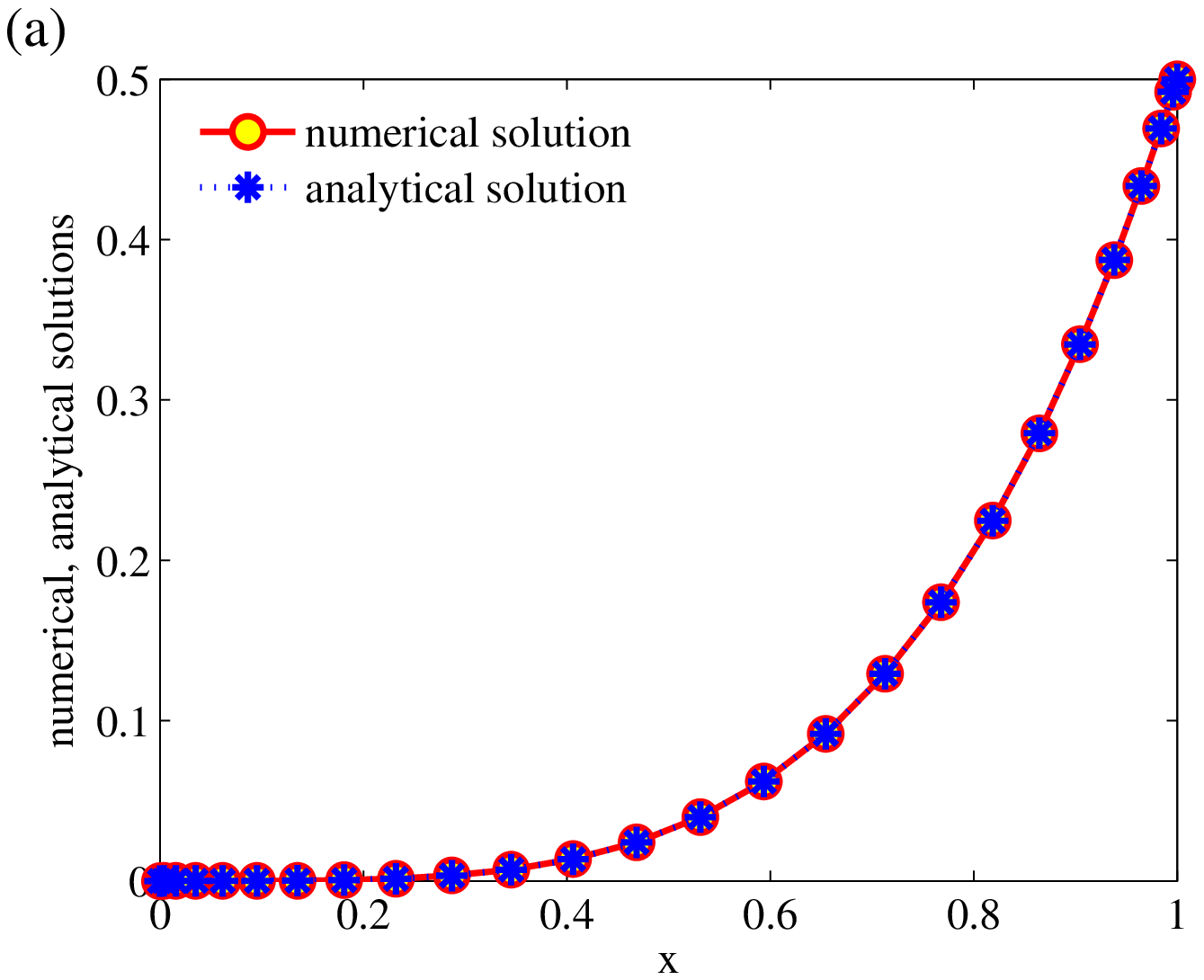}
\end{minipage}
\begin{minipage}[t]{0.46\linewidth}
\includegraphics[width=2.5in]{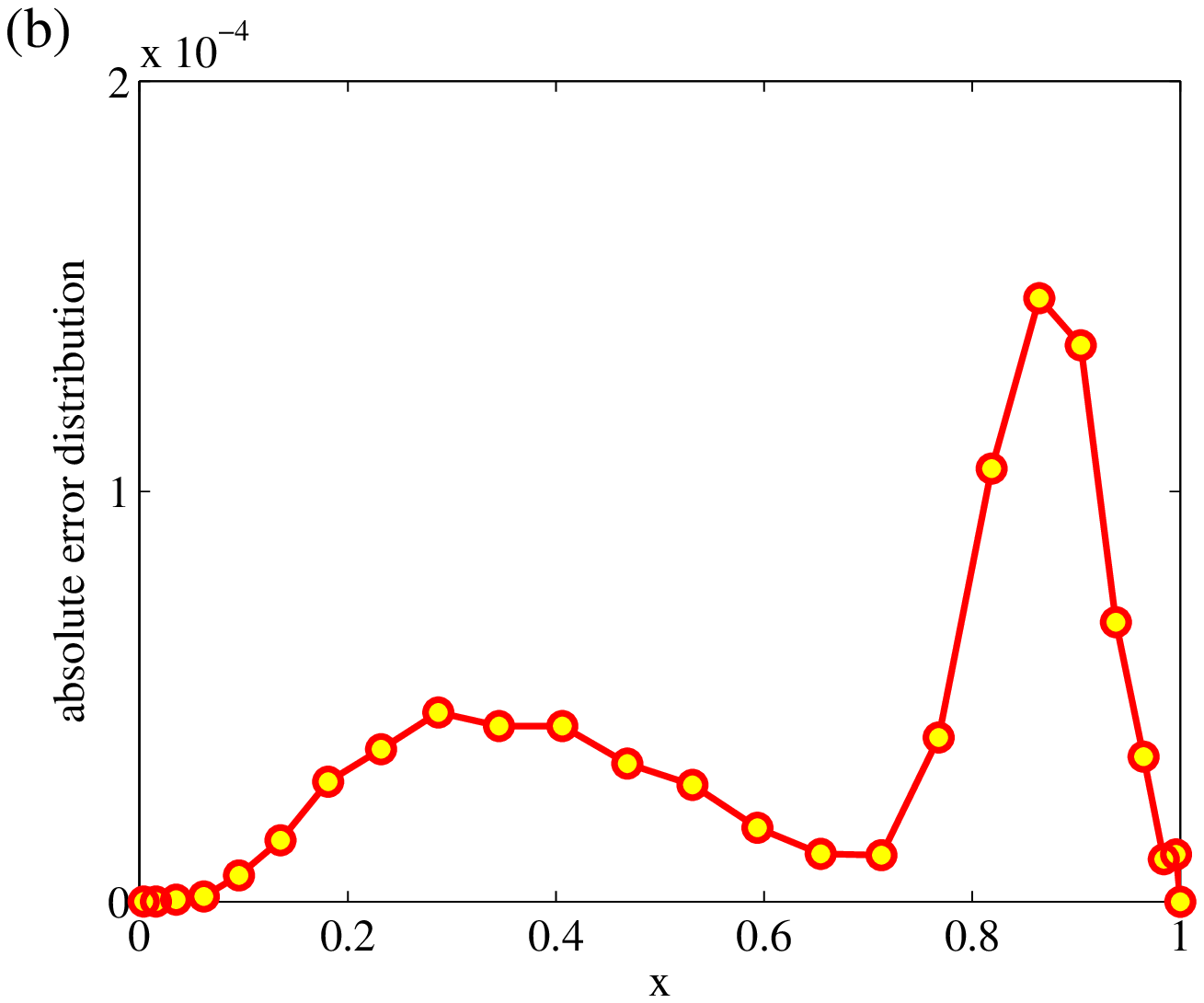}
\end{minipage}
\caption{{\color{blue}The comparison of numerical, analytical solutions and the absolute error by IQ-based DQ method at $t=0.5$.}}\label{fig0}
\end{figure*}

\begin{table*}[!htb]
\centering
\caption{The convergent results at $t=0.5$ of MQ-based DQ method with different $q$, $\tau$ and $M=50$ for Example 6.3.}\label{tab4}
{\color{blue}\begin{tabular}{ccccccc}
\toprule
    & $\tau$ & $||u-U||_{L^2}$  &$Cov.\ rate$  &$||u-U||_{L^\infty}$  & $Cov.\ rate$ \\
\midrule $q=1$
         &1/10    &  3.6590e-03  &-      &7.4663e-03  &- \\
         &1/20    &  1.8618e-03  &0.98   &3.8462e-03  &0.96 \\
         &1/30    &  1.2482e-03  &0.99   &2.5895e-03  &0.98 \\
         &1/40    &  9.3877e-04  &0.99   &1.9517e-03  &0.98 \\
 $q=2$
         &1/10    &  1.0570e-03  &-      &2.1731e-03  &- \\
         &1/20    &  3.0254e-04  &1.80   &6.2150e-04  &1.81 \\
         &1/30    &  1.4056e-04  &1.89   &2.8836e-04  &1.89 \\
         &1/40    &  8.0815e-05  &1.92   &1.6566e-04  &1.93 \\
 $q=3$
         &1/10    &  2.9515e-04  &-      &5.9951e-04  &- \\
         &1/20    &  4.0755e-05  &2.86   &8.1082e-05  &2.89 \\
         &1/30    &  1.2438e-05  &2.93   &2.4644e-05  &2.94 \\
         &1/40    &  5.3228e-06  &2.95   &1.0524e-05  &2.96 \\
 $q=4$
         &1/10    &  4.9886e-05  &-      &1.0407e-04  &- \\
         &1/20    &  2.7325e-06  &4.19   &5.1279e-06  &4.34 \\
         &1/30    &  5.2311e-07  &4.08   &9.7500e-07  &4.09 \\
         &1/40    &  1.6840e-07  &3.94   &3.0658e-07  &4.02 \\
\bottomrule
\end{tabular}  }
\end{table*}

\subsection{Two-dimensional problems}
\noindent
\textbf{Example 6.4.} Consider the two-dimensional single-term TSFPDE:
\begin{align*}
\small
\left\{
             \begin{array}{lll}\displaystyle
\begin{aligned}
 &{^C_0}D^\theta_tu(x,y,t)+\frac{1}{2\cos(\alpha\pi/2)}\Bigg(\frac{\partial^{\alpha}_+ u(x,y,t)}{\partial x^{\alpha}_+}
     +\frac{\partial^{\alpha}_- u(x,y,t)}{\partial x^{\alpha}_-}\Bigg)\\
 &    \quad\displaystyle  +\frac{1}{2\cos(\beta\pi/2)}\Bigg(\frac{\partial^{\beta}_+ u(x,y,t)}{\partial y^{\beta}_+}
       +\frac{\partial^{\beta}_- u(x,y,t)}{\partial y^{\beta}_-}\Bigg) \\
 & \quad    =f(x,y,t),  \quad (x,y;t)\in\Omega\times(0,T],\\
 &     u(x,y,0)=x^2(1-x)^2y^2(1-y)^2,\quad (x,y)\in\Omega, \\
 &     u(x,y,t)=0, \quad (x,y;t)\in\partial\Omega\times(0,T],
\end{aligned}
             \end{array}
        \right.
\end{align*}
{\color{red}on $\Omega=[0,1]\times[0,1]$ with $\alpha=\beta=1.6$} and the source term
\begin{align*}
    f(&x,y,t)=\frac{2t^{2-\theta}}{\Gamma(3-\theta)}x^2(1-x)^2y^2(1-y)^2\\
           &{\color{red}+\frac{(1+t^2)y^2(1-y)^2}{\cos(\alpha\pi/2)}\Bigg\{  \frac{x^{2-\alpha}+(1-x)^{2-\alpha}}{\Gamma(3-\alpha)} } \\
           &{\color{red}   - \frac{6(x^{3-\alpha}+(1-x)^{3-\alpha})}{\Gamma(4-\alpha)}+ \frac{12(x^{4-\alpha}+(1-x)^{4-\alpha})}{\Gamma(5-\alpha)}\Bigg\} } \\
           &{\color{red}+\frac{(1+t^2)x^2(1-x)^2}{\cos(\beta\pi/2)}\Bigg\{  \frac{y^{2-\beta}+(1-y)^{2-\beta}}{\Gamma(3-\beta)}  } \\
           &{\color{red}   - \frac{6(y^{3-\beta}+(1-y)^{3-\beta})}{\Gamma(4-\beta)}+ \frac{12(y^{4-\beta}+(1-y)^{4-\beta})}{\Gamma(5-\beta)}\Bigg\}.  }
\end{align*}

The analytical solution is $u(x,y,t)=(1+t^2)x^2(1-x)^2y^2(1-y)^2$. To show the superiority of our methods as much as possible,
we compare the spatial accuracy of the weighted and shifted GL (WSGL) methods \cite{dq11} ($(p,q)=(1,0)$ for WSGL1 and $(p,q)=(1,-1)$
for WSGL2, $p$, $q$ are weighted coefficients), FD method \cite{dq12}, FE method based on bilinear rectangular
finite elements \cite{da54} and our DQ method based on Inverse Multiquadrics in terms of $||u-U||_{L^2}$.
Here, we note that the structured quadrilateral meshes with the mesh sizes $h=1/4$, $h=1/8$, $h=1/16$ and $h=1/32$ are employed
{\color{red}in the implementation of the above FE method, i.e., the corresponding nodal numbers are $25$, $81$, $289$ and $1089$, respectively.}
Taking $\theta=0.5$, $\nu=8$, $\sigma=q=2$ and $\tau=1.0\times10^{-3}$, the numerical errors at $t=1$
of all the above-mentioned methods are reported in Table \ref{tab5}. From this table, we observe that the proposed
DQ method is beyond all doubt more excellent than WSGL and FD methods in computational accuracy.
Although the error magnitude of DQ method is about the same as bilinear rectangular FE method,
it is certain that our method would be more flexible in implementation and have a less amount of
computation than bilinear rectangular FE method because DQ methods do not require meshes generation
in practical computation, nor is variational principle.

Furthermore, to get more insight into the computational efficiency of the proposed DQ method, we compare the spatial accuracy and CPU
times of FE method based on linear unstructured triangular finite elements \cite{dq17} and our DQ method with the same nodal numbers.
Retaking $\theta=1$, the numerical errors at $t=1$ and the CPU time costs are all tabulated in Table \ref{tab50}, where
the unstructured triangular meshes of the mesh sizes $h\approx1/2$, $h\approx1/4$, $h\approx1/8$ and $h\approx1/12$ are used in FE method
with their nodal numbers exactly being $12$, $37$, $121$ and $199$, respectively.
In Fig. \ref{fig1}, the used triangular meshes with $h\approx1/8$ and $h\approx1/12$ are displayed.
The configuration of 199 nodes used in IMQ-based DQ method and  its corresponding absolute error
of IMQ-based DQ method at $t=1$ are presented in Fig. \ref{fig01}, and
{\color{red}the analytical, numerical solutions are presented in Fig. \ref{figxz3}.}
From these table and figures,
we observe that DQ method achieves better accuracy than FE method does, and
not only that, the CPU time of our DQ method is almost a third of that of FE method in
the best situation, which further manifests that our method causes the total computational amount much less than FE method.
On the other hand, {\color{red}the numerical solution is in good agreement with analytical solution,
and the absolute error reaches a maximum in the center of the square domain
and gradually decreases along the center towards the boundary, which coincides with the shape of the analytical solution.} \\

\begin{table*}[!htb]
\centering
\caption{ The comparison of mean-square errors at $t=1$ for WSGL, FD, FE and DQ methods when $q=2$, $\theta=0.5$ and $\alpha=\beta=1.6$}\label{tab5}
\begin{tabular}{lcccccc}
\toprule
$M$ &WSGL1 \cite{dq11}&WSGL2 \cite{dq11}&FD method \cite{dq12}&FE method \cite{da54}&IMQ-based DQ method \\
\midrule 24   &8.2885e-04   &7.1069e-04  &5.1926e-04   &2.8397e-04  &2.4378e-04 \\
         80   &2.0070e-04   &2.4232e-04  &1.1762e-05   &8.6729e-05  &7.7419e-05 \\
         288  &4.8865e-05   &6.9518e-05  &2.6904e-05   &2.2360e-05  &2.1523e-05 \\
         1088 &1.1904e-05   &1.8377e-05  &6.2224e-06   &5.1208e-06  &6.5475e-06\\
\bottomrule
\end{tabular}
\end{table*}

\begin{table*}[!htb]
\centering
\caption{ The comparison of errors at $t=1$ and CPU times for FE and DQ methods when $q=2$, $\theta=1$ and $\alpha=\beta=1.6$}\label{tab50}
\begin{tabular}{lclcclcc}
\toprule
\multicolumn{1}{l}{\multirow{2}{0.6cm}{$M$}} &\multicolumn{3}{l}{FE method \cite{dq17}}
    &\multicolumn{3}{l}{IMQ-based DQ method} \\
\cline{2-7}& $||u-U||_{L^2}$  &$||u-U||_{L^\infty}$ &CPU times/s  &$||u-U||_{L^2}$  & $||u-U||_{L^\infty}$ & CPU times/s \\
\midrule 11     &  1.4364e-03  &1.9251e-03 & 8.4470  &5.7675e-04  &1.1361e-03 &5.2219 \\
         36     &  3.8724e-04  &4.1695e-04 & 34.0283   &1.1968e-04  &2.6712e-04 &15.1634 \\
         120    &  1.2548e-04  &1.9499e-04 & 138.7742   &2.1899e-05  &4.7802e-05 & 46.0256\\
         198    &  7.8313e-05  &1.2380e-04 & 240.2891   &1.0404e-05   &2.2870e-05 &77.1730 \\
\bottomrule
\end{tabular}
\end{table*}

\begin{figure*}[!htb]
\begin{minipage}[t]{0.43\linewidth}
\includegraphics[width=2.7in]{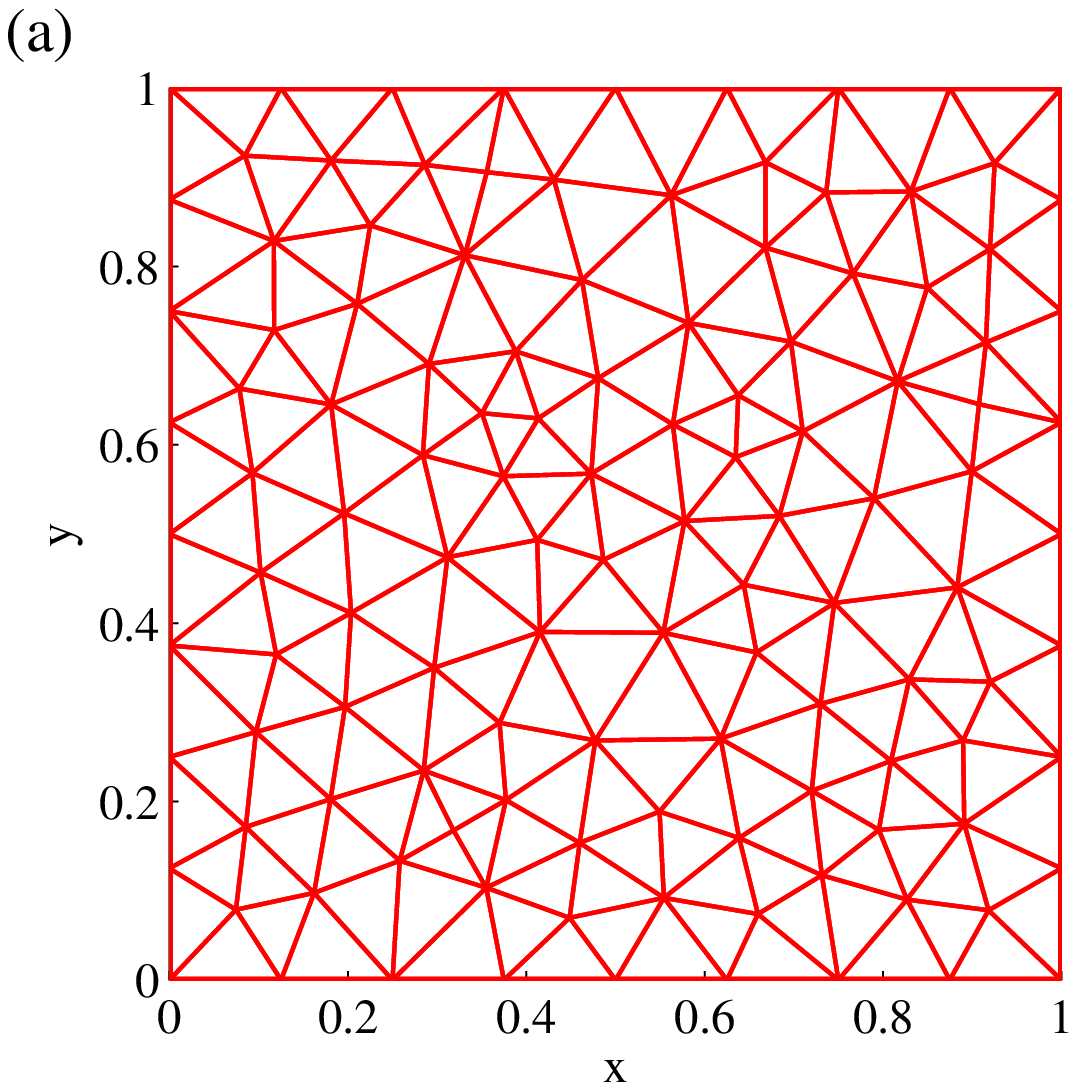}
\end{minipage}
\begin{minipage}[t]{0.43\linewidth}
\includegraphics[width=2.7in]{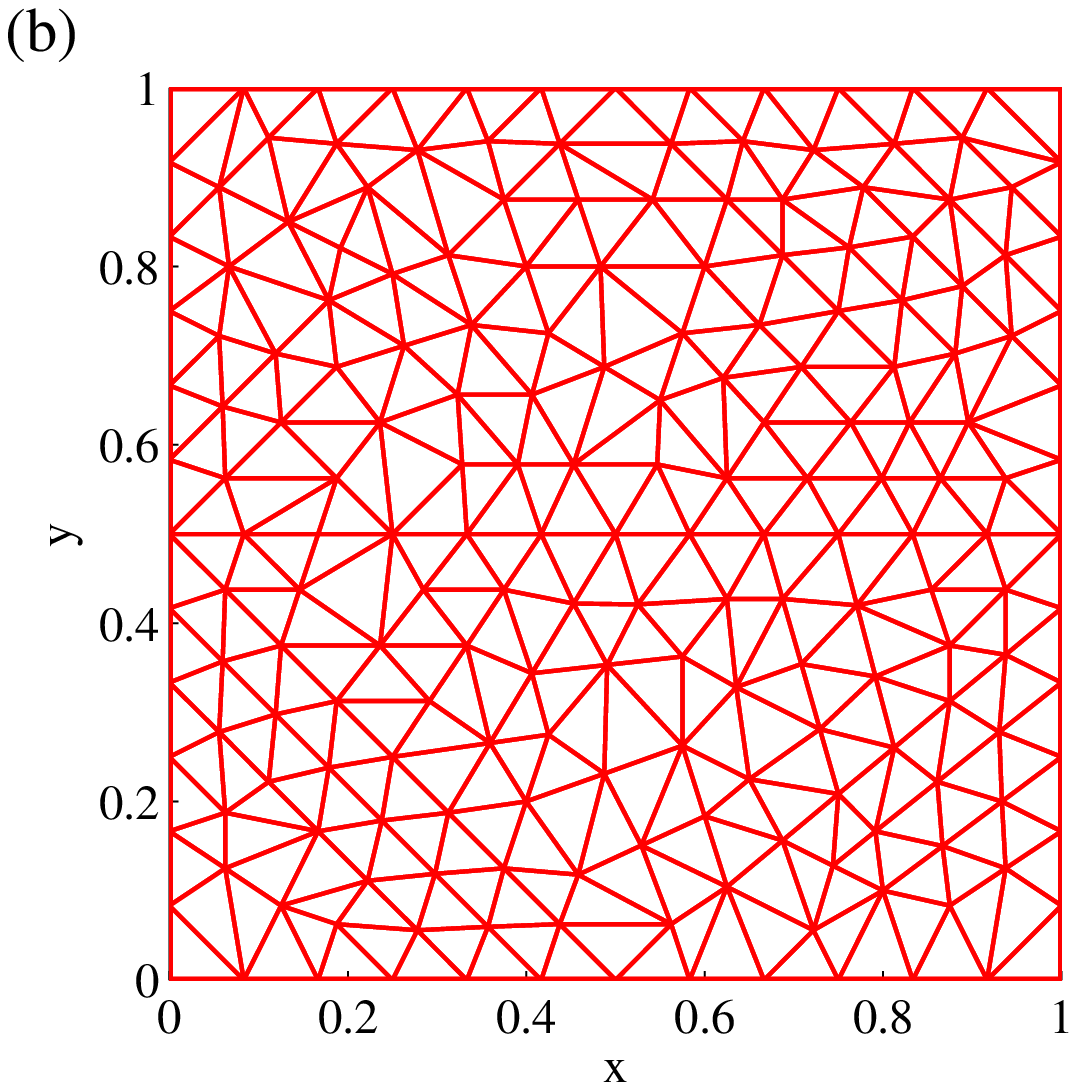}
\end{minipage}
\caption{The unstructured triangular meshes used by FE method with the mesh sizes $h\approx 1/8$ and $h\approx1/12$, respectively.}\label{fig1}
\end{figure*}

\begin{figure*}[!htb]
\begin{minipage}[t]{0.46\linewidth}
\includegraphics[width=2.7in]{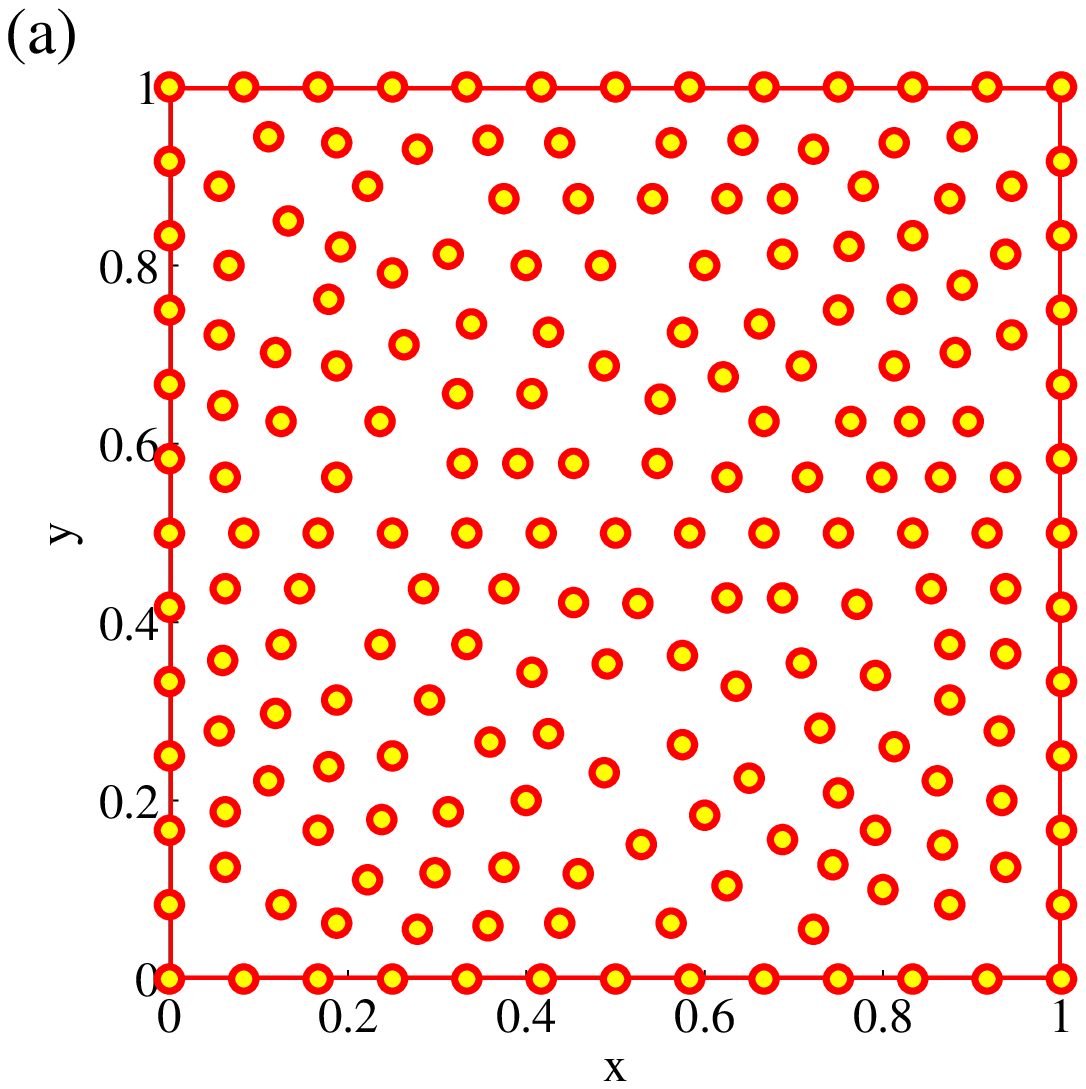}
\end{minipage}
\begin{minipage}[t]{0.46\linewidth}
\includegraphics[width=2.7in]{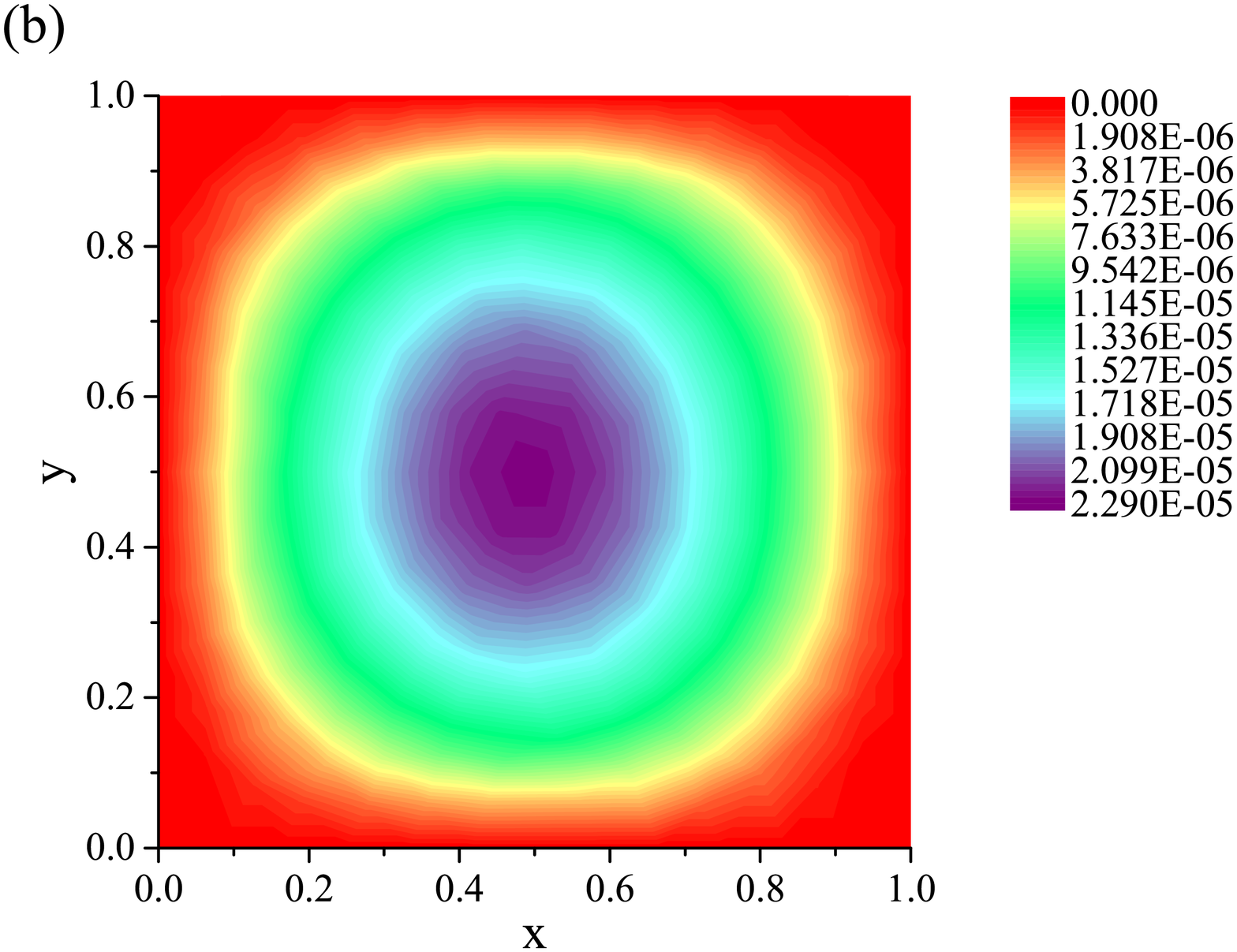}
\end{minipage}
\caption{The configuration of 199 nodes and its absolute error by IMQ-based DQ method at $t=1$.}\label{fig01}
\end{figure*}

\begin{figure*}[!htb]
\begin{minipage}[t]{0.49\linewidth}
\includegraphics[width=3in]{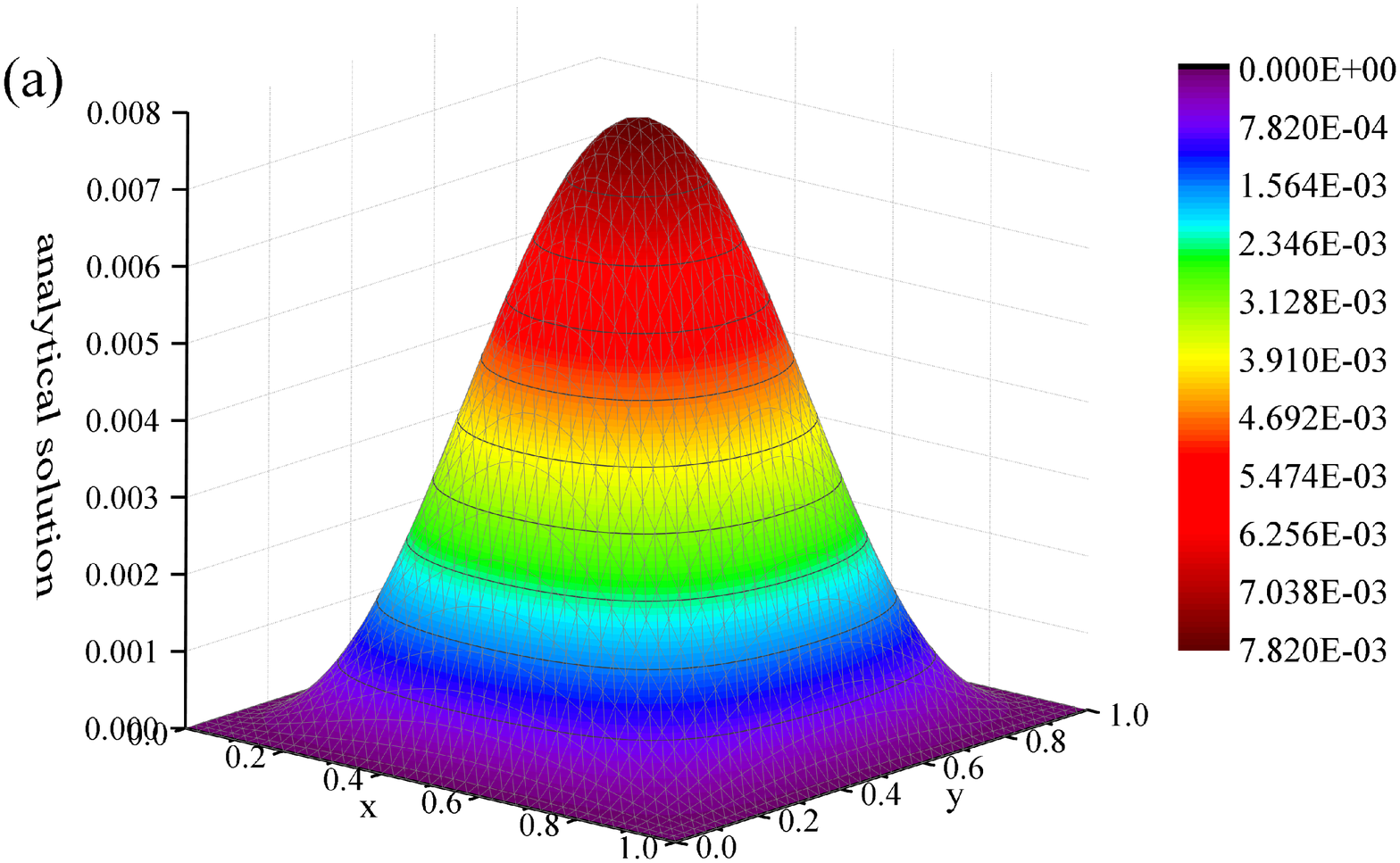}
\end{minipage}
\begin{minipage}[t]{0.49\linewidth}
\includegraphics[width=3in]{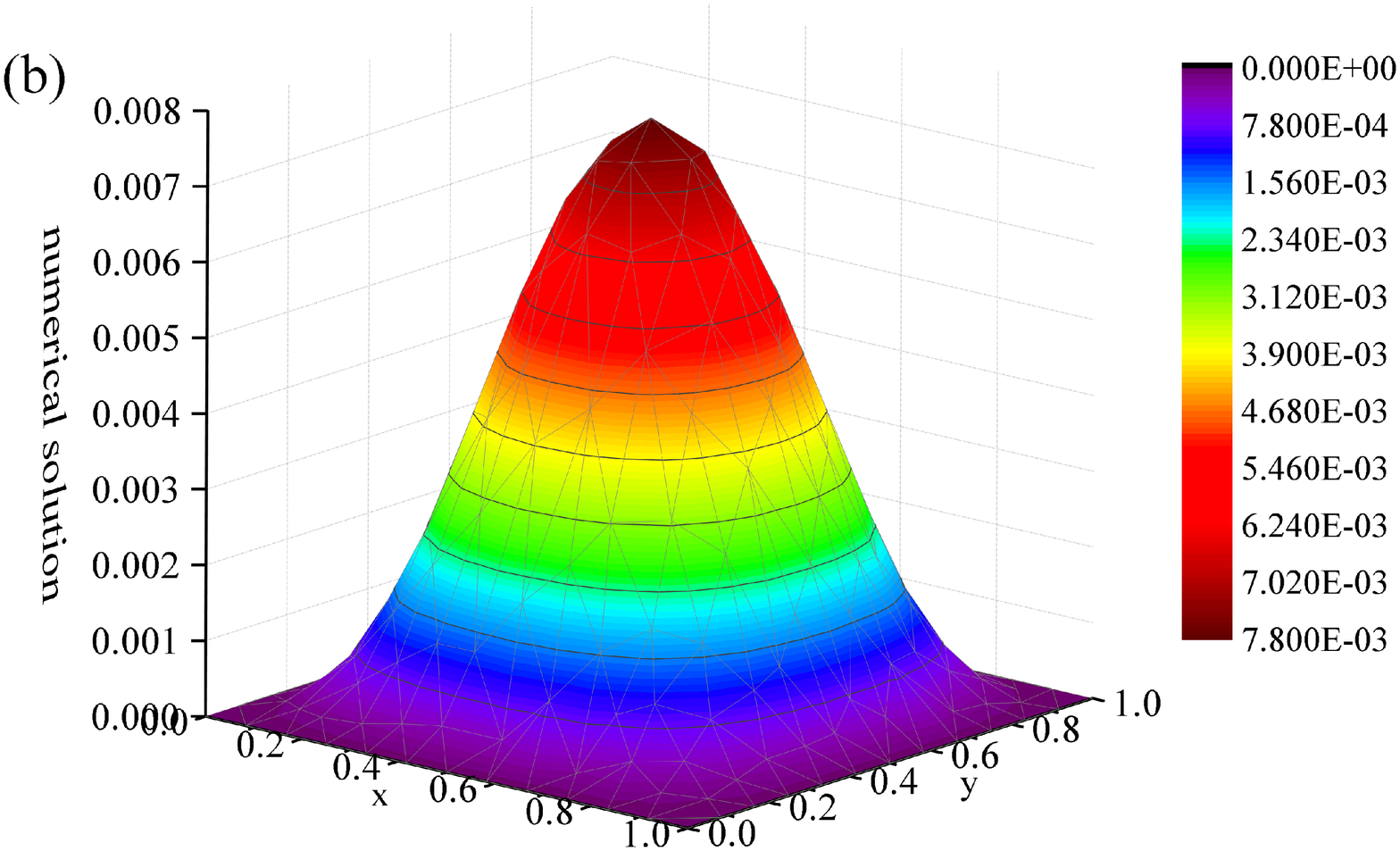}
\end{minipage}
\caption{{\color{red}The analytical solution and the numerical solution by IMQ-based DQ method at $t=1$.}}\label{figxz3}
\end{figure*}

\noindent
\textbf{Example 6.5.} Consider the two-dimensional multi-term TSFPDE:
\begin{align*}
\left\{
             \begin{array}{lll}
     &\displaystyle\sum_{r=1}^{3}a_r{^C_0}D_t^{\theta_r}u(x,y,t)-\frac{x^\alpha}{2}\frac{\partial^{\alpha}_+ u(x,y,t)}{\partial x^{\alpha}_+} \\
     &\displaystyle\quad -\frac{y^\beta}{2}\frac{\partial^{\beta}_+ u(x,y,t)}{\partial y^{\beta}_+}=f(x,y,t),  \quad (x,y;t)\in\Omega\times(0,T],\\
     &  u(x,y,0)=0,\quad (x,y)\in\Omega, \\
     &  u(x,y,t)=t^3x^2y^2, \quad (x,y;t)\in\partial\Omega\times(0,T],
             \end{array}
        \right.
\end{align*}
on the triangular domain $\Omega=\{(x,y)|0\leq x\leq1,0\leq y\leq 1-x\}$ with $a_1=2$, $a_2=0.5$, $a_3=3$, $\theta_1=0.1$,
$\theta_2=0.3$, $\theta_3=0.5$, $\alpha=1.5$, $\beta=1.8$ and the source term
\begin{align*}
   f(x,y,t)&=\sum_{r=1}^{3}\frac{6a_rt^{3-\theta_r}x^2y^2}{\Gamma{(4-\theta_r)}} \\
           &\quad -t^3x^2y^2\Bigg\{ \frac{1}{\Gamma(3-\alpha)}+\frac{1}{\Gamma(3-\beta)}\Bigg\}.
\end{align*}

The analytical solution is $u(x,y,t)=t^3x^2y^2$. We use Multiquadrics and Gaussian as the test functions
and choose {\color{red}the corresponding free parameters} to be $\nu=0.3$, $\sigma=1$ and $\nu=0.5$, $\sigma=1$, respectively.
The convergent results at $t=0.5$ with $q=3$ and $\tau=1.0\times10^{-3}$ are reported in Table \ref{tab6}.
In Fig. \ref{fig2}, we show the used configuration of 152 nodes and its corresponding absolute error of
GA-based DQ method at $t=0.5$. {\color{red}In Fig. \ref{figxz4}, we give a comparison of the analytical and numerical solutions
at $t=0.5$.} It is clear from the above table and figure that
{\color{red}a good convergent property has been observed for
both two DQ methods either in sense of the mean-square error or maximum error as the distances of these nodes decreases.}
{\color{red}The numerical solution is also in perfect agreement with analytical solution.}
Moreover, the absolute error of GA-based DQ method is relatively large around the hypotenuse of this triangular domain
and gradually decreases along the hypotenuse towards its two right-angle sides. {\color{red}
This is possibly caused by the fact that the surface of analytical solution is steep near the hypotenuse while becomes flat
near the right-angle sides since the large error usually occurs at the place where the gradient of true solution is relatively high.}\\

\begin{table*}[!htb]
\centering
\caption{ The convergent results at $t=0.5$ with $q=3$, $\tau=1.0\times10^{-3}$ and different $M$ for Example 6.5}\label{tab6}
\begin{tabular}{lclclc}
\toprule
\multicolumn{1}{l}{\multirow{2}{0.6cm}{$M$}} &\multicolumn{2}{l}{MQ-based DQ method}
    &\multicolumn{2}{l}{GA-based DQ method} \\
\cline{2-5}& $||u-U||_{L^2}$  &$||u-U||_{L^\infty}$  &$||u-U||_{L^2}$  & $||u-U||_{L^\infty}$ \\
\midrule 55     &  5.7083e-05  &1.8789e-04   &8.2862e-05  &3.0100e-04 \\
         79     &  4.4719e-05  &1.6652e-04   &5.9078e-05  &2.2586e-04 \\
         114    &  3.8698e-05  &1.5015e-04   &5.2482e-05  &2.0574e-04 \\
         152    &  2.9558e-05  &1.1855e-04   &3.6037e-05  &1.5203e-04 \\
\bottomrule
\end{tabular}
\end{table*}

\begin{figure*}[!htb]
\begin{minipage}[t]{0.48\linewidth}
\includegraphics[width=3.2in]{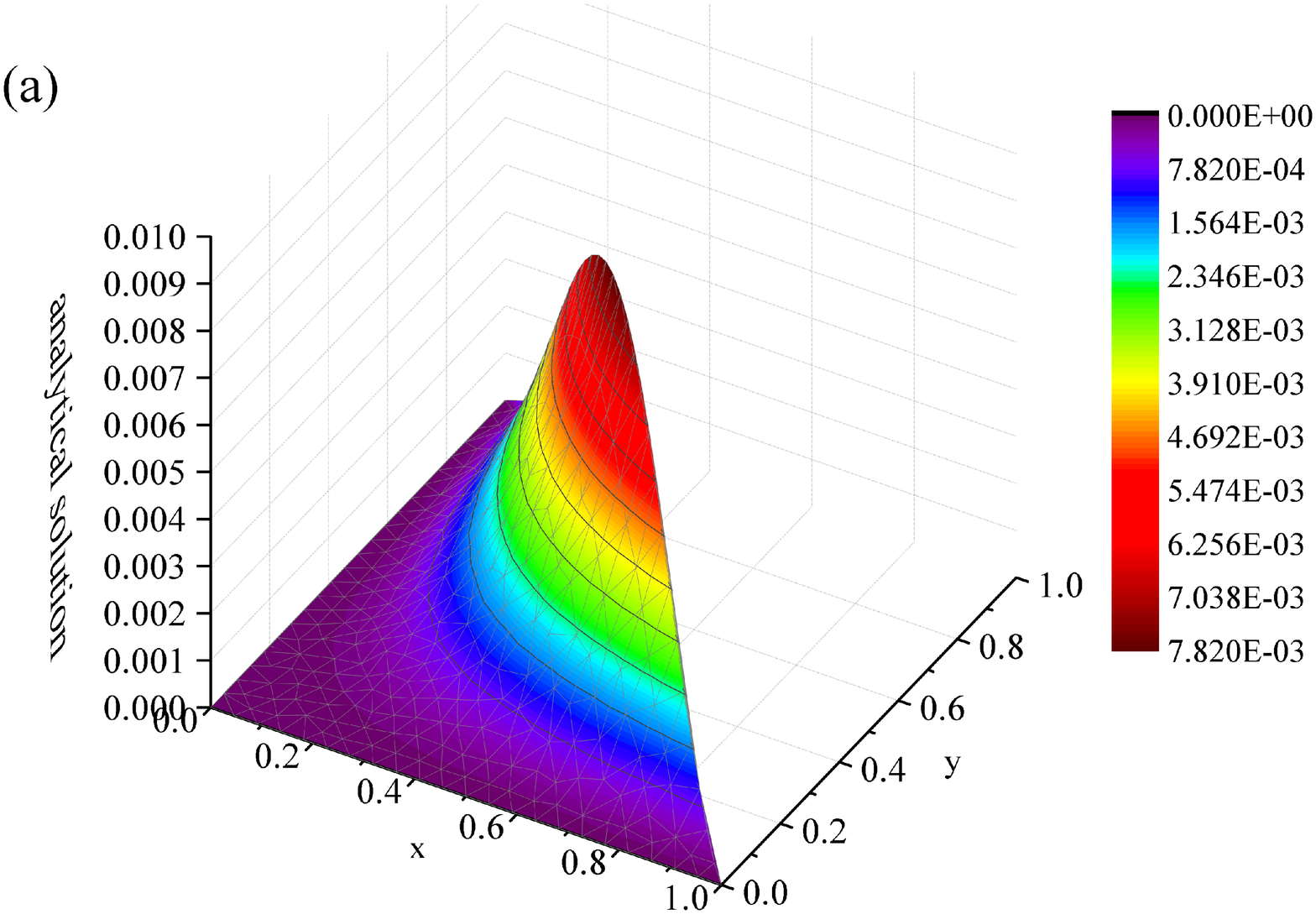}
\end{minipage}
\begin{minipage}[t]{0.48\linewidth}
\includegraphics[width=3.2in]{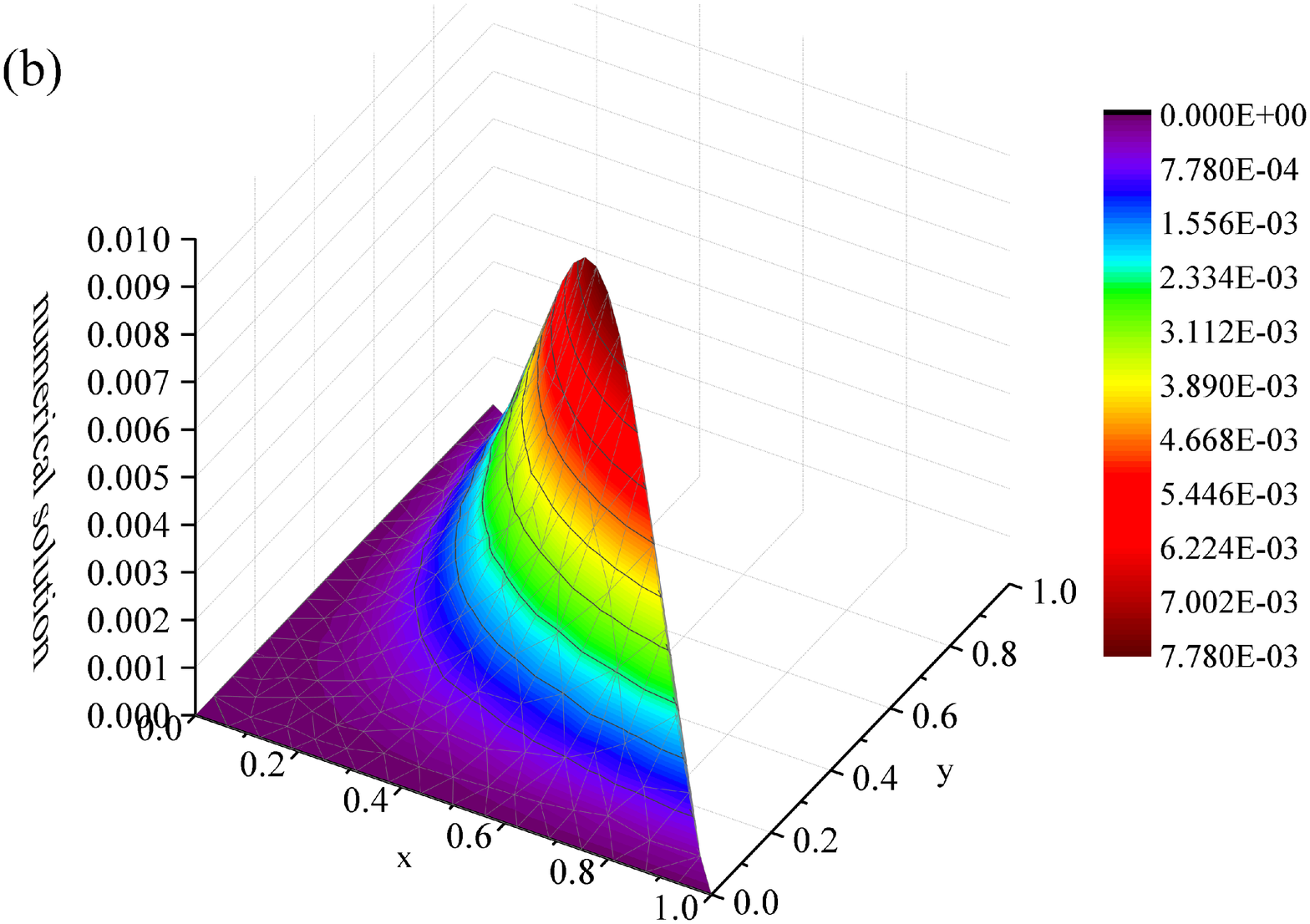}
\end{minipage}
\caption{{\color{red}The analytical solution and the numerical solution by GA-based DQ method at $t=0.5$.}}\label{figxz4}
\end{figure*}

\begin{figure*}[!htb]
\begin{minipage}[t]{0.46\linewidth}
\includegraphics[width=2.7in]{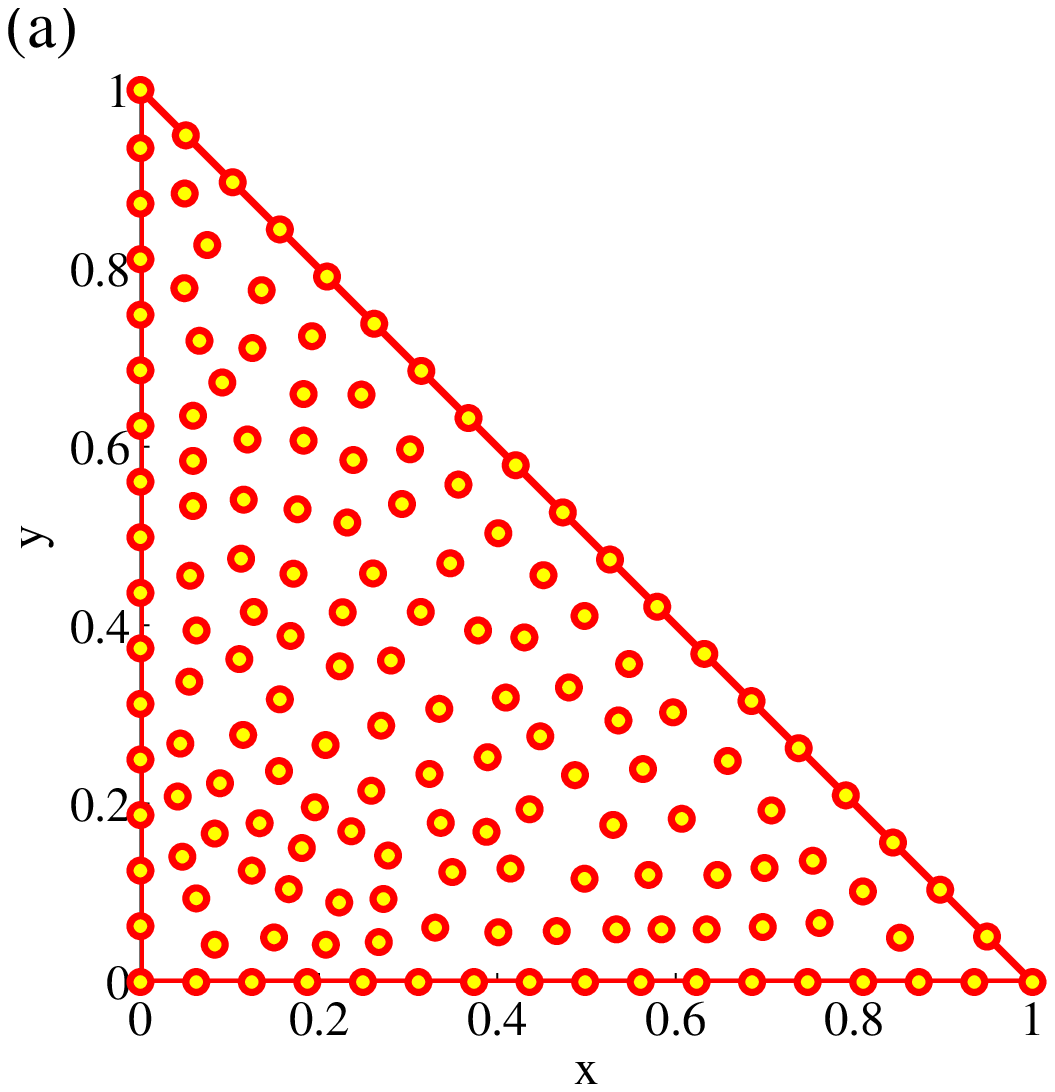}
\end{minipage}
\begin{minipage}[t]{0.46\linewidth}
\includegraphics[width=3.0in]{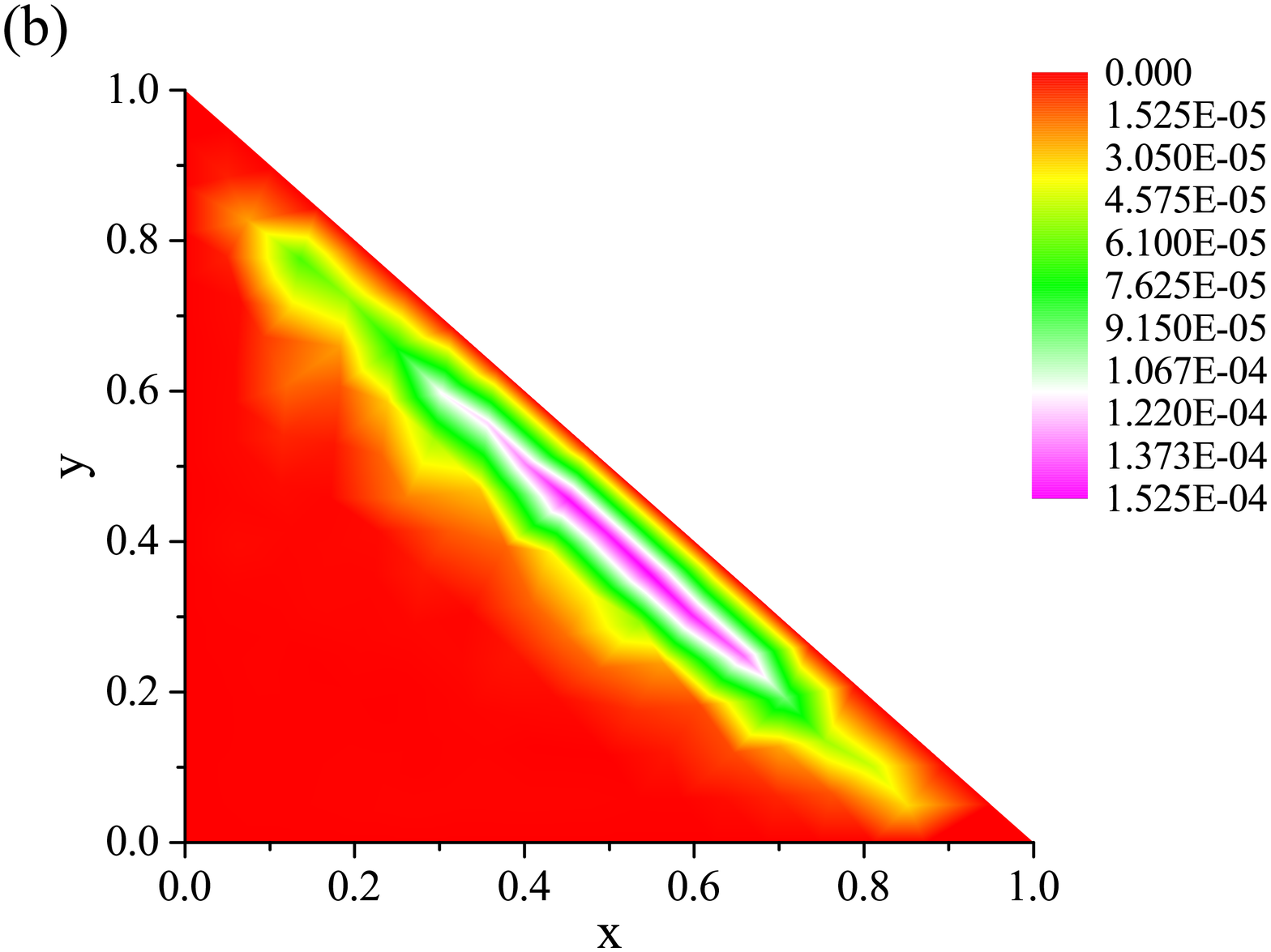}
\end{minipage}
\caption{The configuration of 153 nodes and its absolute error by GA-based DQ method at $t=0.5$.}\label{fig2}
\end{figure*}

\noindent
\textbf{Example 6.6.} Consider the two-dimensional multi-term TSFPDE:
\begin{align*}
\left\{
             \begin{array}{lll}\displaystyle
\begin{aligned}
    & \sum_{r=1}^{3}a_r{^C_0}D_t^{\theta_r}u(x,y,t)+\frac{1}{2\cos(\alpha\pi/2)} \\
    & \quad\displaystyle \cdot\Bigg(\frac{\partial^{\alpha}_+ u(x,y,t)}{\partial x^{\alpha}_+}
     +\frac{\partial^{\alpha}_- u(x,y,t)}{\partial x^{\alpha}_-}\Bigg)\\
    & \quad\displaystyle  +\frac{1}{2\cos(\beta\pi/2)}\Bigg(\frac{\partial^{\beta}_+ u(x,y,t)}{\partial y^{\beta}_+}
       +\frac{\partial^{\beta}_- u(x,y,t)}{\partial y^{\beta}_-}\Bigg)\\
    & \quad =f(x,y,t),  \quad (x,y;t)\in\Omega\times(0,T],\\
    &   u(x,y,0)=(4x^2+y^2-1)^2/10,\quad (x,y)\in\Omega, \\
    &   u(x,y,t)=0, \quad (x,y;t)\in\partial\Omega\times(0,T],
\end{aligned}
             \end{array}
        \right.
\end{align*}
on the elliptical domain $\Omega=\{(x,y)|4x^2+y^2\leq 1\}$ with $a_1=a_2=a_3=1$, $\theta_1=0.6$,
$\theta_2=0.7$, $\theta_3=1$, $\alpha=\beta=1.6$ and the source term
\begin{align*}
   f(x,y,t)&=\sum_{r=1}^{3}\frac{a_rt^{2-\theta_r}}{5\Gamma(3-\theta_r)}(4x^2+y^2-1)^2\\
           &+\frac{(1+t^2)}{20\cos(\alpha\pi/2)}\Bigg\{ (192x^2_l+16y^2-16)\frac{(x-x_l)^{2-\alpha}}{\Gamma(3-\alpha)} \\
           & +\frac{384x_l(x-x_l)^{3-\alpha}}{\Gamma(4-\alpha)}+ \frac{384(x-x_l)^{4-\alpha}}{\Gamma(5-\alpha)} \\
           &+(192x^2_r+16y^2-16)\frac{(x_r-x)^{2-\alpha}}{\Gamma(3-\alpha)} \\
           &+\frac{384x_r(x_r-x)^{3-\alpha}}{\Gamma(4-\alpha)} + \frac{384(x_r-x)^{4-\alpha}}{\Gamma(5-\alpha)}\Bigg\}\\
           &+\frac{(1+t^2)}{20\cos(\beta\pi/2)}\Bigg\{ (192y^2_l+16x^2-16)\frac{(y-y_l)^{2-\beta}}{\Gamma(3-\beta)}\\
           &+\frac{384y_l(y-y_l)^{3-\beta}}{\Gamma(4-\beta)}+ \frac{384(y-y_l)^{4-\beta}}{\Gamma(5-\beta)} \\
           &+(192y^2_r+16x^2-16)\frac{(y_r-y)^{2-\beta}}{\Gamma(3-\beta)}\\
           &+\frac{384y_r(y_r-y)^{3-\beta}}{\Gamma(4-\beta)}+ \frac{384(y_r-y)^{4-\beta}}{\Gamma(5-\beta)}\Bigg\},
\end{align*}
with $x_l=-\sqrt{1-y^2}/2$, $x_r=\sqrt{1-y^2}/2$, $y_l=-\sqrt{1-4x^2}$ and $y_r=\sqrt{1-4x^2}$.

The analytical solution is $u(x,y,t)=\frac{1+t^2}{10}(4x^2+y^2-1)^2$. We compare the spatial accuracy of FE method \cite{da56} and
our DQ methods based on Inverse Quadratics and Gaussians in terms of $||u-U||_{L^2}$, where we choose
{\color{red}the free parameters} $\nu=11$, $\sigma=2$ for Inverse Quadratics while $\nu=4.5$, $\sigma=2$ for Gaussians.
Taking $q=2$ and $\tau=1.0\times10^{-3}$, the numerical errors at $t=1$ of FE and DQ methods are all tabulated in Table \ref{tab7}.
The used configurations of 113, 239 and 413 nodes and their corresponding absolute errors of IQ-based
DQ method at $t=1$ are displayed in Fig. \ref{fig3}. In the computation, $N_e$ is the total number of triangles
of the meshes used by the FE method. In Table \ref{tab7}, it can be seen that $N_e$ is taken to be 70, 468, 1142 and 1738, respectively,
then according to the proportional relationship between the numbers of nodes and triangles
in a triangular mesh (the proportion of numbers of nodes to triangles is about 3:1), we know that the nodal number is bound to be much larger than $N_e$.
Thus, we conclude that our DQ methods generate the same error magnitude
as FE method with less nodes, which shows that our DQ methods are fairly efficient
and equipped with some advantages FE method does not have.
Besides, the absolute error reaches a maximum in the center of this elliptical domain and gradually
decreases along the center towards its boundary, this is because the change of analytical solution
is relatively rapid around the center of this domain and becomes gently in other places.

\begin{table*}[!htb]
\centering
\caption{The comparison of  mean-square errors at $t=1$ for FE and DQ methods when $\theta_1=0.6$,
$\theta_2=0.7$, $\theta_3=1$ and $\alpha=\beta=1.6$.}\label{tab7}
\begin{tabular}{lclclclc}
\toprule
\multicolumn{1}{l}{\multirow{2}{0.6cm}{$N_e$}} &\multicolumn{1}{l}{FE method \cite{da56}}
    &\multicolumn{1}{l}{\multirow{2}{0.6cm}{$M$}} &\multicolumn{2}{l}{IQ-based DQ method}
    &\multicolumn{2}{l}{GA-based DQ method} \\
\cline{2-2}\cline{4-7}
        & $||u-U||_{L^2}$& & $||u-U||_{L^2}$  &$||u-U||_{L^\infty}$  &$||u-U||_{L^2}$  & $||u-U||_{L^\infty}$ \\
\midrule70  &8.6311e-03 &34  &   7.4516e-03  &1.5952e-02   &6.7396e-03  &1.4335e-02 \\
        468 &1.4508e-03 &112 &   1.2389e-03  &3.1296e-03   &1.3671e-03  &3.5605e-03 \\
        1142&5.4919e-04 &238 &   6.6484e-04  &1.4906e-03   &7.3477e-04  &1.7147e-03 \\
        1738&3.6969e-04 &412 &   4.2034e-04  &9.4854e-04   &4.1947e-04  &1.0285e-03 \\
\bottomrule
\end{tabular}
\end{table*}

\begin{figure*}[!htb]
\centering
\begin{minipage}[t]{0.3\linewidth}
\includegraphics[width=1.6in]{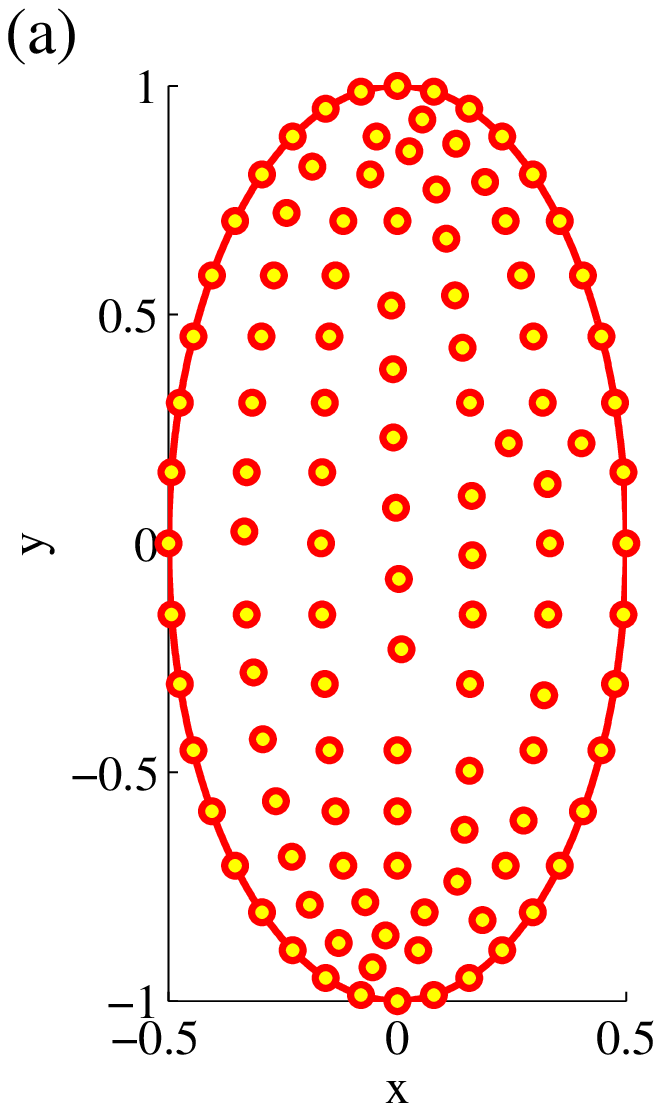}
\end{minipage}
\begin{minipage}[t]{0.3\linewidth}
\includegraphics[width=1.6in]{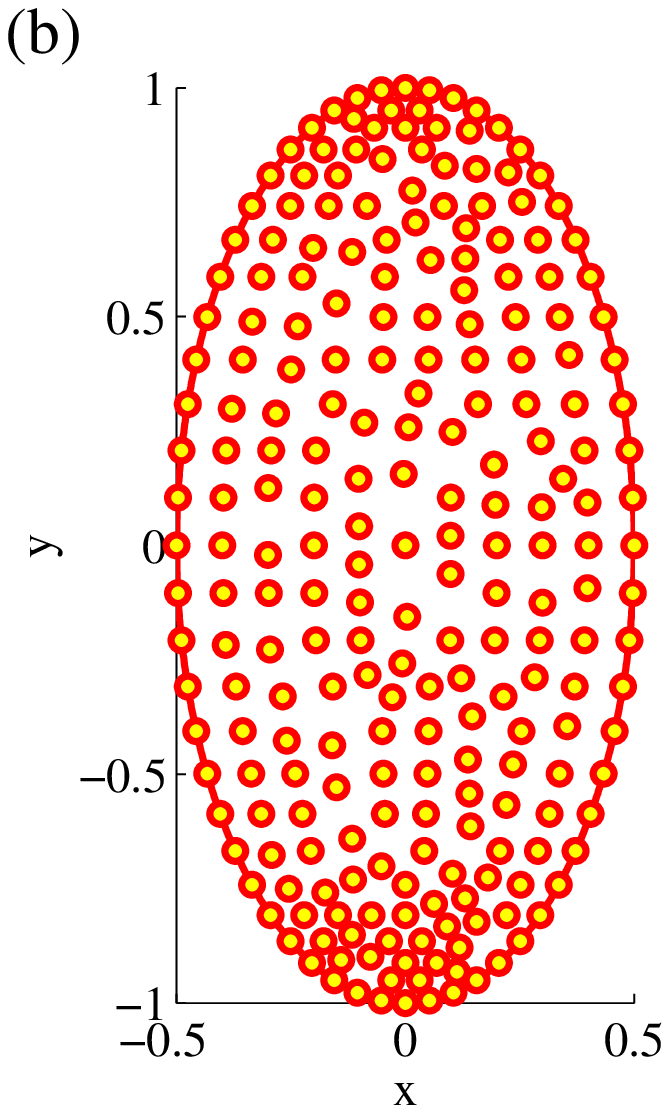}
\end{minipage}
\begin{minipage}[t]{0.3\linewidth}
\includegraphics[width=1.6in]{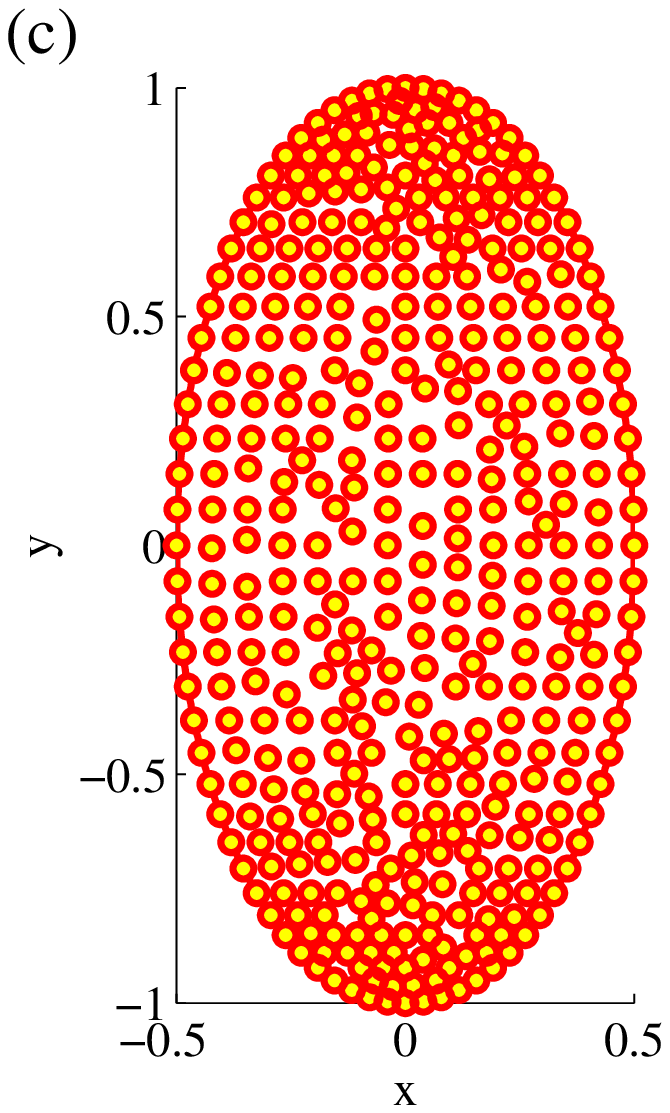}
\end{minipage}\\
\begin{minipage}[t]{0.3\linewidth}
\includegraphics[width=3.2in]{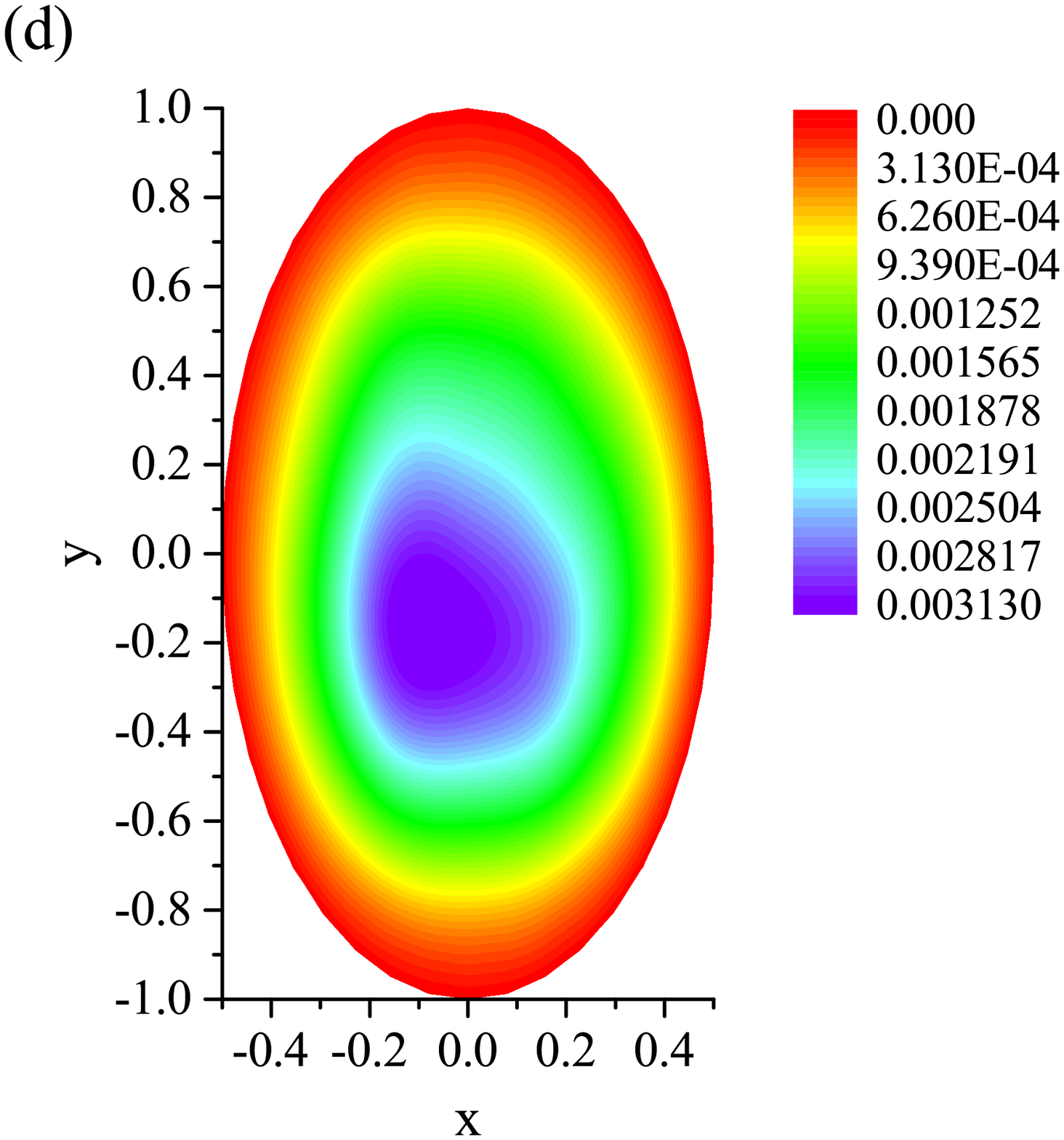}
\end{minipage}
\begin{minipage}[t]{0.3\linewidth}
\includegraphics[width=3.2in]{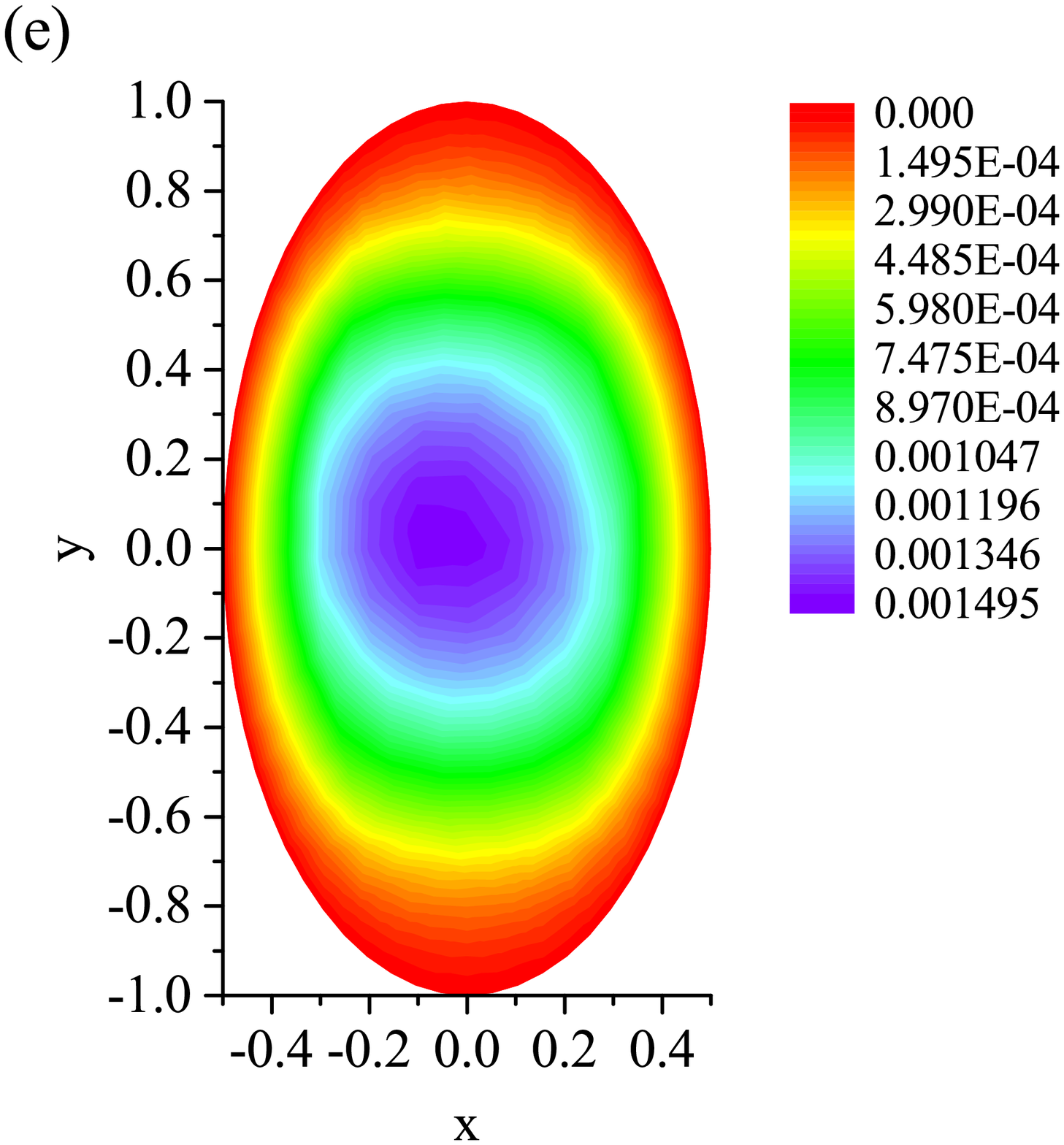}\\[10pt]
\end{minipage}
\begin{minipage}[t]{0.3\linewidth}
\includegraphics[width=3.2in]{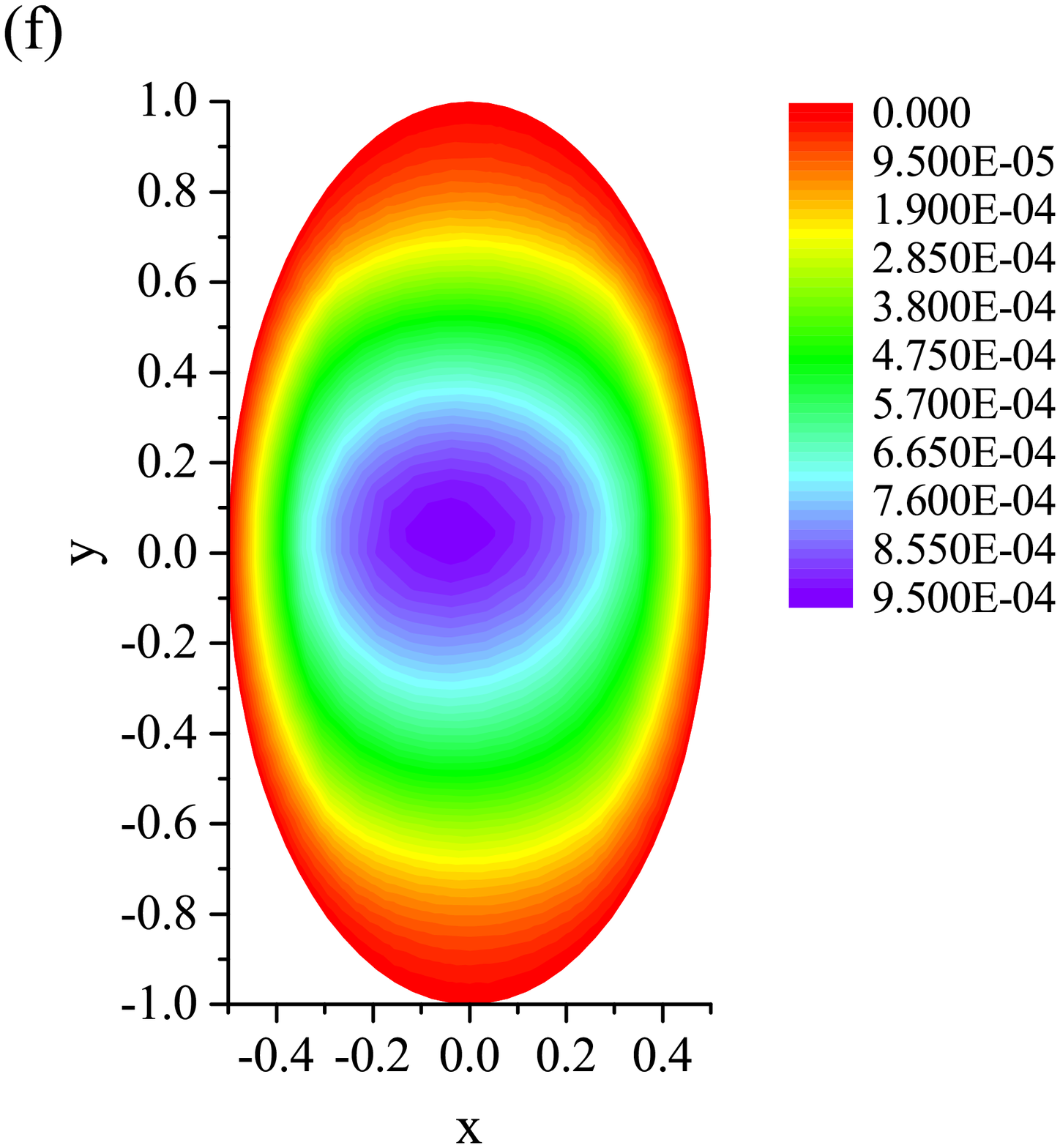}
\end{minipage}\\
\caption{The configurations of 113, 239 and 413 nodes and their absolute errors by IQ-based DQ method at $t=1$.}\label{fig3}
\end{figure*}

\subsection{Three-dimensional problems}
\noindent
\textbf{Example 6.7.} Consider the three-dimensional multi-term TSFDE:
\begin{align*}
\left\{
             \begin{array}{lll}\displaystyle
\begin{aligned}
       &\sum_{r=1}^{4}a_rD_t^{\theta_r}u(x,y,z,t)
       -\frac{y^\gamma}{2}\displaystyle\frac{\partial^{\gamma}_- u(x,y,z,t)}{\partial z^{\gamma}_-}\\
        &  \quad =f(x,y,z,t), \quad (x,y,z;t)\in\Omega\times(0,T],\\
      & u(x,y,z,0)=0,\quad (x,y,z)\in\Omega, \\
      & u(x,y,z,t)=t^3y^{2-\gamma}(1-x-y-z)^2, \\
      &\qquad\qquad\qquad\quad\quad \ (x,y,z;t)\in\partial\Omega\times(0,T],
\end{aligned}
             \end{array}
        \right.
\end{align*}
on the triangular pyramid domain $\Omega=\{(x,y,z)|0\leq x,y\leq 1, 0\leq z\leq 1-x-y\}$ with
$\theta_1=2$, $\theta_2=1.5$, $\theta_3=2$, $\theta_4=1$, $a_1=0.2$, $a_2=0.4$, $a_3=0.7$, $a_4=0.9$,
$\gamma=1.8$ and the source term
\begin{align*}
   f(x,y,z,t)&=\sum_{r=1}^{4}\frac{6a_rt^{3-\theta_r}}{\Gamma(4-\theta_r)}y^{2-\gamma}(1-x-y-z)^2\\
             &\quad -\frac{t^3y^2(1-x-y-z)^{2-\gamma}}{\Gamma(3-\gamma)},
\end{align*}

The analytical solution is given by $u(x,y,z,t)=t^3y^{2-\gamma}(1-x-y-z)^2$.  We test the convergent
behavior in space by using Inverse Multiquadrics and Inverse Quadratics as the test functions.
{\color{red}The algorithm is run on the nodal distributions of 91, 119, 165 and 214 nodes, and for Inverse Multiquadrics, the
corresponding shape parameters $c$ is chosen to be 0.1, 0.2, 0.9 and 1, while for Inverse Multiquadrics,
we choose $c$ to be 0.08, 0.16, 0.7 and 0.8, respectively. }
The convergent results at $t=0.5$ with $q=2$ and $\tau=5.0\times10^{-4}$ are listed in Table \ref{tab9}.
The used configuration of 119 nodes and its corresponding absolute error of
IMQ-based DQ method at $t=0.5$ are displayed in Fig. \ref{fig5}, respectively. It is clear from
the above table and figure that the errors of these two DQ methods present a stable downward trend
as we continually refine the nodes and this fact shows that our DQ methods are stable and convergent
in dealing with the three-dimensional multi-term TSFPDEs.
The absolute error of IMQ-based DQ method is relatively large at the nodes near the center of this triangular pyramid domain and
the further the node from its center, the smaller the error looks likes, which coincides with the
change of analytical solution. Moreover, IMQ-based DQ method produces nearly the same accuracy
as IQ-based DQ method by using the above parameters. \\

\begin{table*}[!htb]
\centering
\caption{The convergent results at $t=0.5$ with $q=2$ and $\tau=5.0\times10^{-4}$ for Example 6.7}\label{tab9}
\begin{tabular}{lclclc}
\toprule
\multicolumn{1}{l}{\multirow{2}{0.6cm}{$M$}} &\multicolumn{2}{l}{IMQ-based DQ method}
    &\multicolumn{2}{l}{IQ-based DQ method} \\
\cline{2-5}& $||u-U||_{L^2}$  &$||u-U||_{L^\infty}$  &$||u-U||_{L^2}$  & $||u-U||_{L^\infty}$ \\
\midrule 90     &  1.1210e-04  &7.5967e-04   &1.0397e-04  &6.2086e-04 \\
         118    &  5.9533e-05  &4.3038e-04   &7.8823e-05  &5.0199e-04 \\
         164    &  4.6240e-05  &3.2299e-04   &4.7980e-05  &3.3329e-04 \\
         213    &  4.1000e-05  &2.4174e-04   &4.2061e-05  &2.6229e-04 \\
\bottomrule
\end{tabular}
\end{table*}

\begin{figure*}[!htb]
\begin{minipage}[t]{0.43\linewidth}
\includegraphics[width=3.2in]{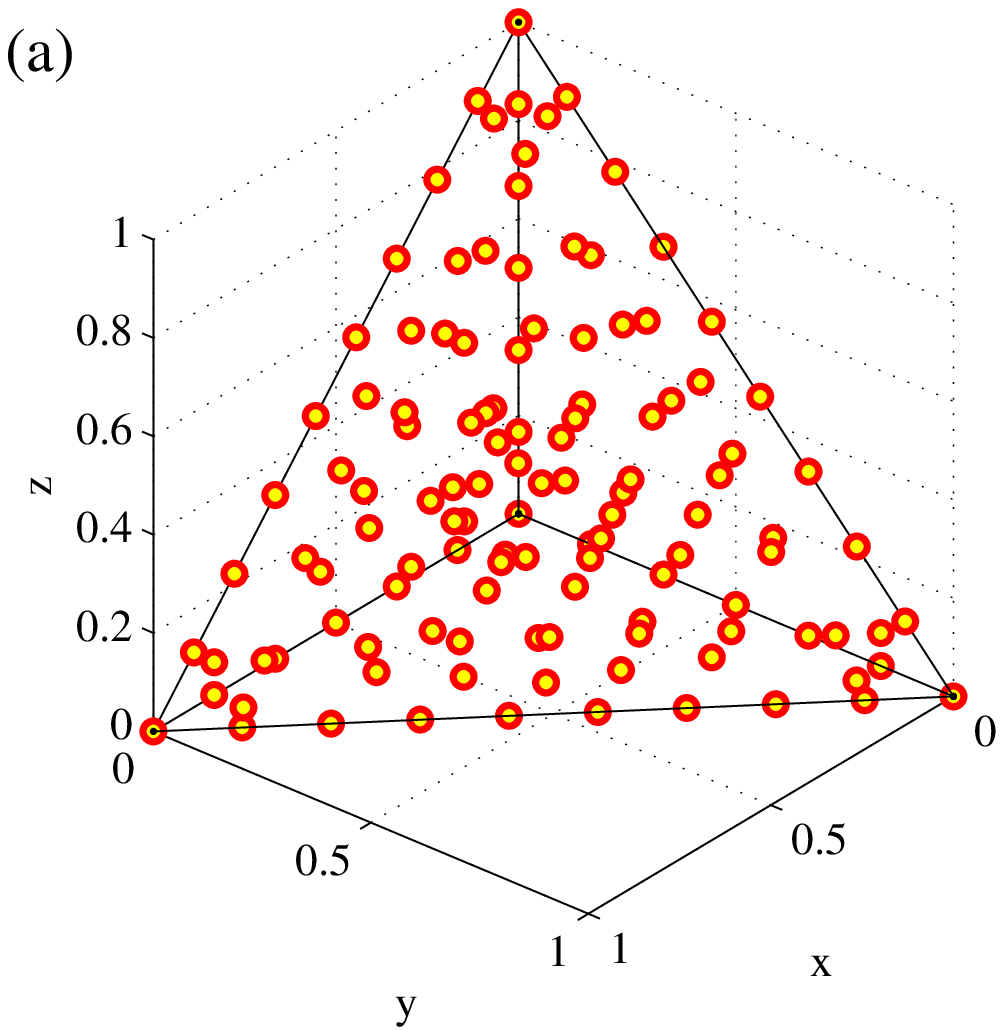}
\end{minipage}
\begin{minipage}[t]{0.43\linewidth}
\includegraphics[width=3.2in]{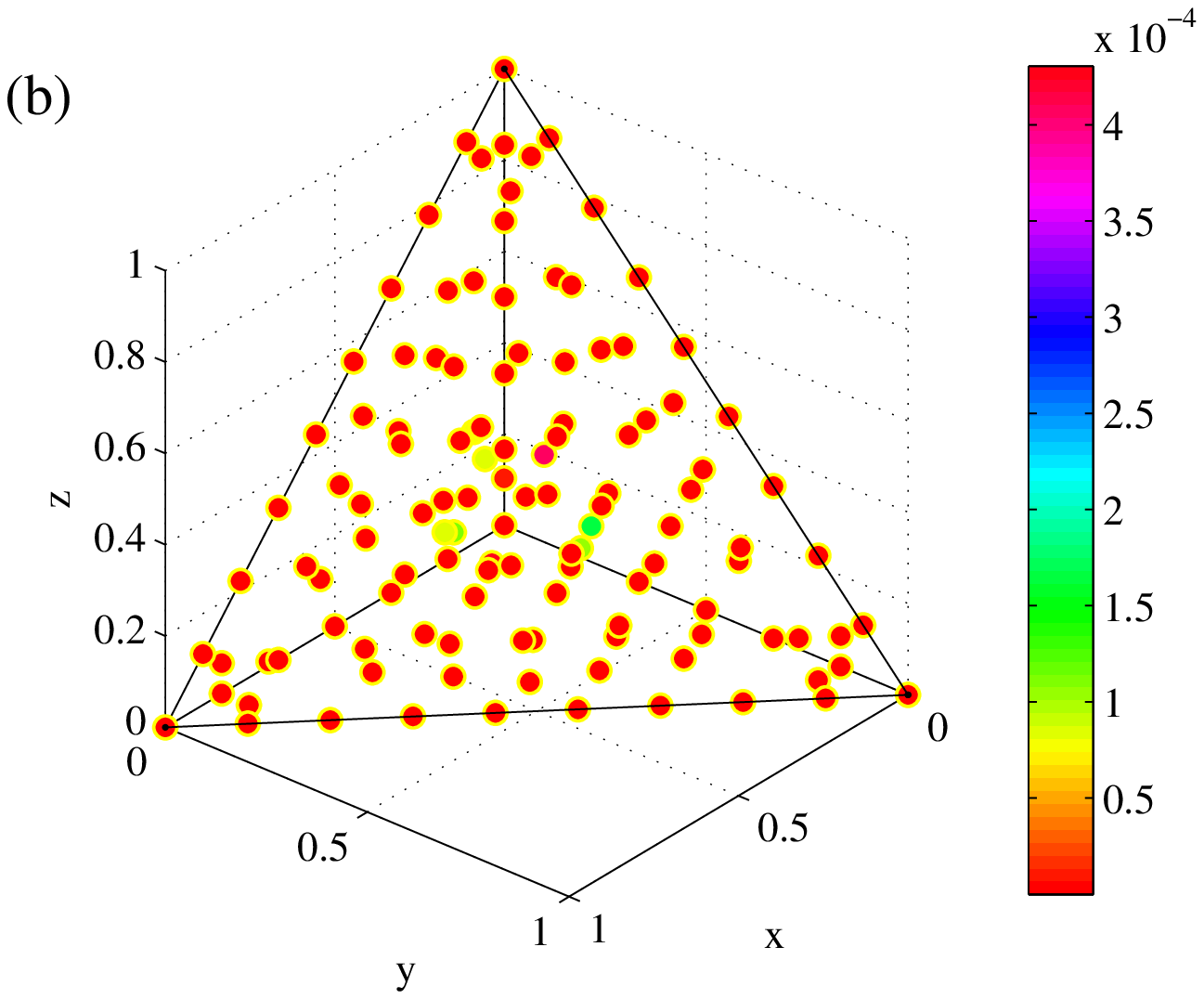}
\end{minipage}
\caption{The configuration of 119 nodes and its absolute error by IMQ-based DQ method at $t=0.5$.}\label{fig5}
\end{figure*}

\noindent
\textbf{Example 6.8.} Consider the three-dimensional multi-term TSFDE:
\begin{align*}
\small
\left\{
             \begin{array}{lll}\displaystyle
\begin{aligned}
     &\frac{{^C_0}D_t^{\theta_1}u(x,y,z,t)+3{^C_0}D_t^{\theta_2}u(x,y,z,t)}{2}\\
     &  -\frac{(y-0.5+ \sqrt{0.25-(x-0.5)^2-z^2})^\beta}{6}
        \displaystyle\frac{\partial^{\beta}_+ u(x,y,z,t)}{\partial y^{\beta}_+} \\
     &   \qquad=f(x,y,z,t),
        \quad (x,y,z;t)\in\Omega\times(0,T],\\
     &  u(x,y,z,0)=0,\quad (x,y,z)\in\Omega, \\
     &  u(x,y,z,t)=(1+t^4)\big(y-0.5+ \sqrt{0.25-(x-0.5)^2-z^2}\big)^3z^3,\\
     & \qquad\qquad\qquad\qquad (x,y,z;t)\in\partial\Omega\times(0,T],
\end{aligned}
             \end{array}
        \right.
\end{align*}
on the sphere domain $\Omega=\{(x,y,z)|(x-0.5)^2+(y-0.5)^2+z^2\leq0.25\}$ with $\theta_1=0.3$,
$\theta_2=0.8$ and $\beta=1.9$. The source term is given as follows
\begin{small}
\begin{align*}
   f(x,y,z,t)&=\Bigg\{\frac{12t^{4-\theta_1}}{\Gamma(5-\theta_1)}+\frac{36t^{4-\theta_2}}{\Gamma(5-\theta_2)}\Bigg\}\\
             &\quad    \cdot\big(y-0.5+ \sqrt{0.25-(x-0.5)^2-z^2}\big)^3z^3\\
             & -\frac{(1+t^4)\big(y-0.5+ \sqrt{0.25-(x-0.5)^2-z^2}\big)^3z^3}{\Gamma(4-\beta)},
\end{align*}
\end{small}
to enforce the analytical solution $u(x,y,z,t)=(1+t^4)\big(y-0.5+ \sqrt{0.25-(x-0.5)^2-z^2}\big)^3z^3$.

In this test, we select $q=4$, $\tau=1.0\times10^{-3}$ and report the convergent results at $t=0.5$ in Table \ref{tab8}, where
Multiquadrics and Inverse Multiquadrics are used as the test functions and {\color{red}the free parameters} are
chosen to be $\nu=3$, $\sigma=1$ and $\nu=3.3$, $\sigma=1$, respectively. Also, we highlight the used configuration
of 181 nodes and its corresponding absolute error of MQ-based DQ method at $t=0.5$ in Fig. \ref{fig4}. The results observed
from the table and figure illustrate that the proposed DQ methods generate the approximate solutions which
are seems to be very consistent with the analytical solution and the global errors in mean-square and
maximum norm sense by both two DQ methods have no big difference. Besides, the absolute error
of MQ-based DQ method coincides with the change of analytical solution. Therefore,
our DQ methods can be considered as a good choice in solving the three-dimensional multi-term TSFPDEs.

\begin{table*}[!htb]
\centering
\caption{The convergent results at $t=0.5$ with $q=4$ and $\tau=1.0\times10^{-3}$ for Example 6.8}\label{tab8}
\begin{tabular}{lclclc}
\toprule
\multicolumn{1}{l}{\multirow{2}{0.6cm}{$M$}} &\multicolumn{2}{l}{MQ-based DQ method}
    &\multicolumn{2}{l}{IMQ-based DQ method} \\
\cline{2-5}& $||u-U||_{L^2}$  &$||u-U||_{L^\infty}$  &$||u-U||_{L^2}$  & $||u-U||_{L^\infty}$ \\
\midrule 73     &  1.7998e-04  &8.8476e-04   &1.6390e-04  &8.4253e-04 \\
         101    &  1.2964e-04  &8.0668e-04   &1.2897e-04  &8.1281e-04 \\
         180    &  5.7266e-05  &4.6403e-04   &5.9258e-05  &4.7803e-04 \\
         336    &  2.1243e-05  &1.3452e-04   &2.2835e-05  &1.4298e-04 \\
\bottomrule
\end{tabular}
\end{table*}

\begin{figure*}[!htb]
\begin{minipage}[t]{0.43\linewidth}
\includegraphics[width=3.2in]{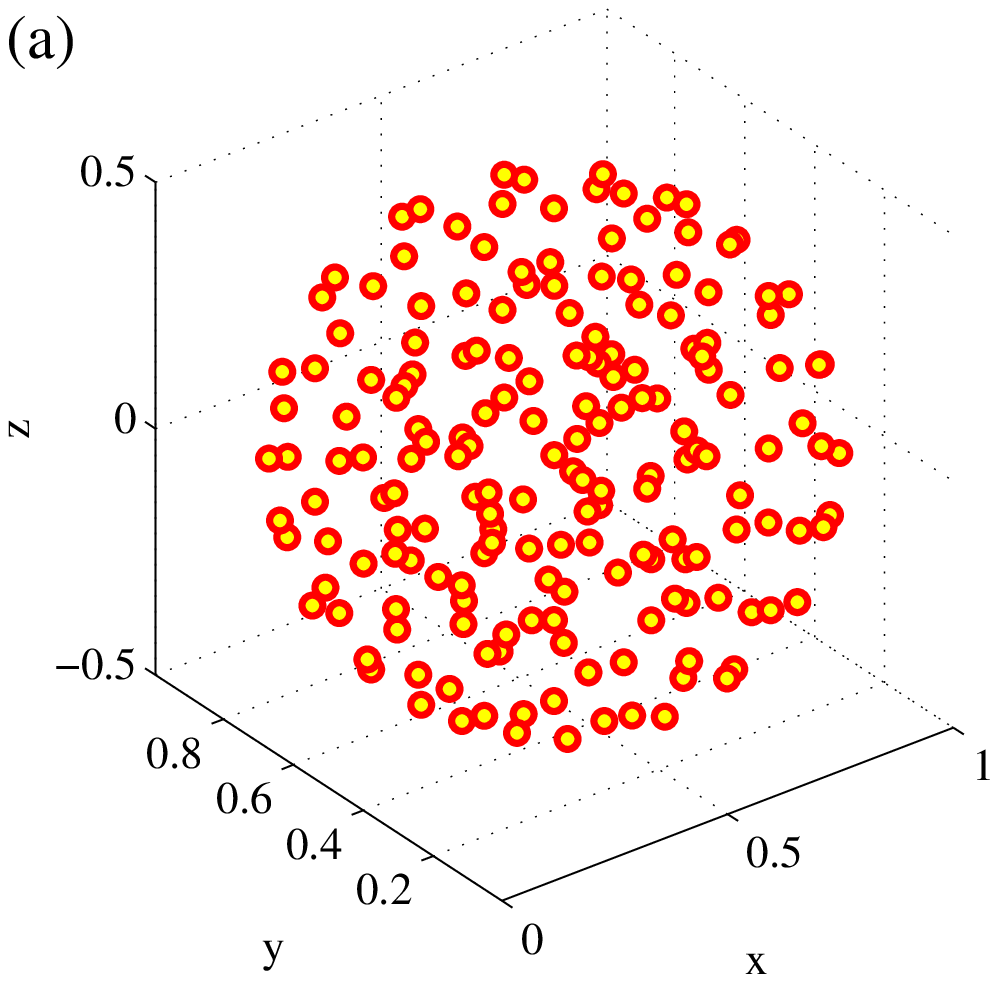}
\end{minipage}
\begin{minipage}[t]{0.43\linewidth}
\includegraphics[width=3.2in]{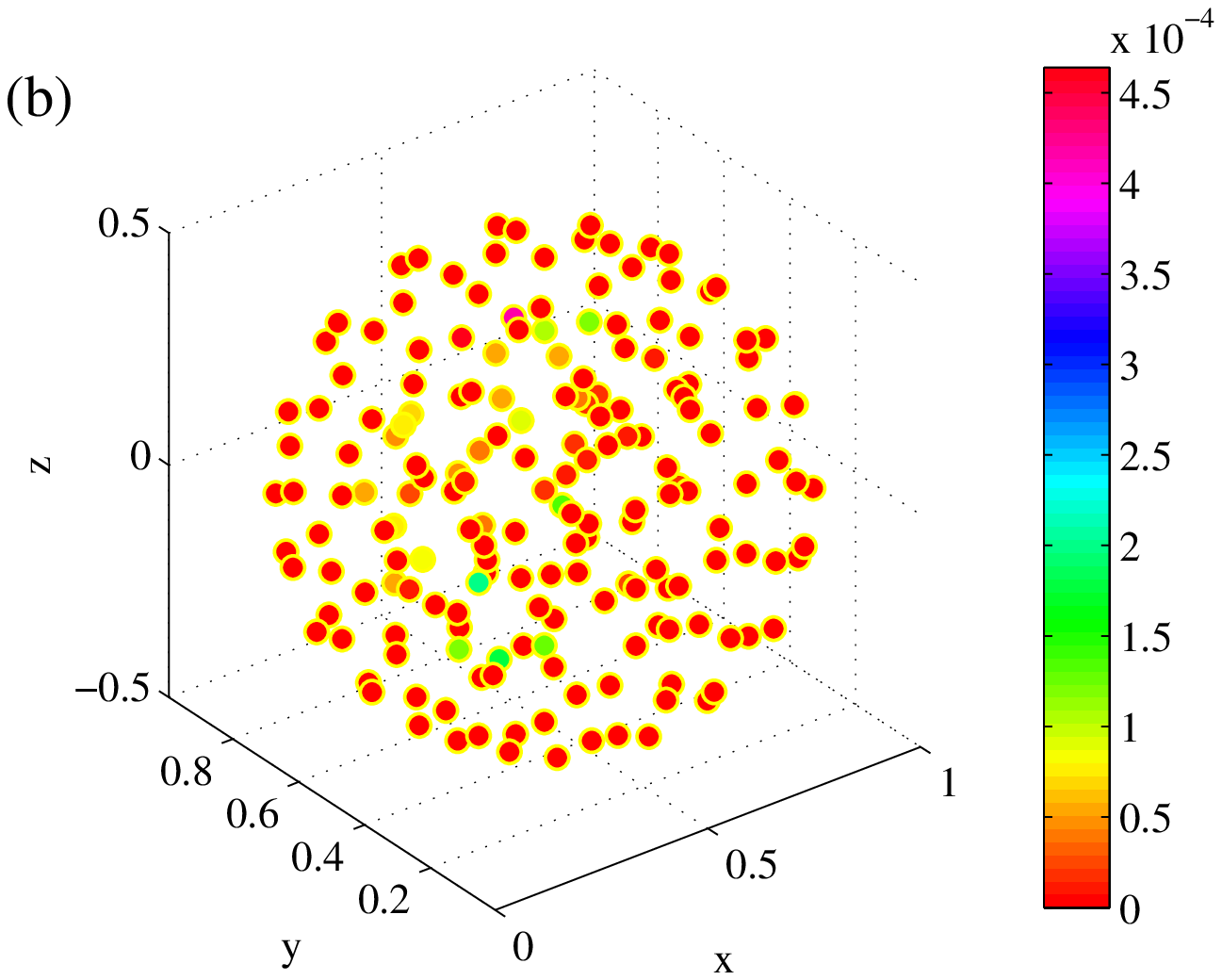}
\end{minipage}
\caption{The configuration of 181 nodes and its absolute error by MQ-based DQ method at $t=0.5$.}\label{fig4}
\end{figure*}

{\color{red}
\subsection{Numerical simulation}
\indent

In the last, we simulate the advection and diffusion of a plume of contaminant particles in groundwater,
governed by
\begin{align*}
\small
\left\{
             \begin{array}{lll}\displaystyle
\begin{aligned}
     &\frac{\partial u(x,y,t)}{\partial t}
        +\frac{0.03}{4\pi\cos(\alpha\pi/2)}\Bigg(\frac{\partial^{\alpha}_+ u(x,y,t)}{\partial x^{\alpha}_+}
     +\frac{\partial^{\alpha}_- u(x,y,t)}{\partial x^{\alpha}_-}\Bigg)\\
    &  \quad\displaystyle  +\frac{0.03}{4\pi\cos(\beta\pi/2)}\Bigg(\frac{\partial^{\beta}_+ u(x,y,t)}{\partial y^{\beta}_+}
       +\frac{\partial^{\beta}_- u(x,y,t)}{\partial y^{\beta}_-}\Bigg) \\
    &   \quad+0.012\frac{\partial u(x,y,t)}{\partial x}=0,\quad (x,y;t)\in\Omega\times(0,T],\\
    &  u(x,y,t)=0,\quad(x,y;t)\in\partial\Omega\times(0,T],
\end{aligned}
             \end{array}
        \right.
\end{align*}
on $\Omega=[0,4]\times[0,1]$, with the following compactly supported initial-value condition
\begin{align*}
u(x,y,0)=\left\{
\begin{array}{ccc}\displaystyle
\begin{aligned}
     &1000*2^{ 1-\frac{1}{1-\frac{(x-0.5)^2}{0.04}-\frac{(y-0.5)^2}{0.04}} }, \\
      & \qquad\quad \textrm{if}\quad 1-\frac{(x-0.5)^2}{0.04}-\frac{(y-0.5)^2}{0.04}>0, \\
     &  0,\qquad \textrm{otherwise}.
\end{aligned}
             \end{array}
        \right.
\end{align*}

This computational experiment was previously considered in \cite{dq34}, but with fractional directional
derivative in space. Here, we replace the fractional directional derivative by the left,
right Riemann-Liouville derivatives and simulate the dynamic process of the plume of contaminant particles
by using Inverse Multiquadrics as test functions. The spreading of the plume of
contaminants is governed by the diffusion term, while its drifting is governed by the advection term.
We irregularly place 1105 nodes on $\Omega$ and run the algorithm by $q=2$, $\tau=1.0\times10^{-3}$, $\nu=9$ and $\sigma=1$,
where $\frac{\partial u(x,y,t)}{\partial x}$ is approximated by Eq. (\ref{xxzz01}).
Letting $\alpha=\beta=1.9$, we report the contour plots of the concentration fields
at $t=0.02$, $1$, and $3$ in Fig. \ref{figns1}, respectively.

As we clearly see from these contour plots, the contaminant particles are
injected into the groundwater in a circular region $\{(x,y)|(x-0.5)^2+(y-0.5)^2\leq0.04\}$ at $t=0$, and then
slowly spread in all directions. As time goes on, the contaminated area gets bigger and bigger, while its
concentration gets lower and lower. Retaking $\alpha=\beta=1.3$, we present the contour plot
of the concentration field at $t=3$ in Fig. \ref{figns2}. As compared to the simulated result in Fig. \ref{figns1},
we find that the contaminant particles have much smaller diffusion velocity, i.e., the size of fractional
powers can affect the diffusion velocity of these contaminant particles and this phenomena may be useful in
some physical applications.

\begin{figure*}[!htb]
\begin{minipage}[t]{0.9\linewidth}
\includegraphics[width=6.0in]{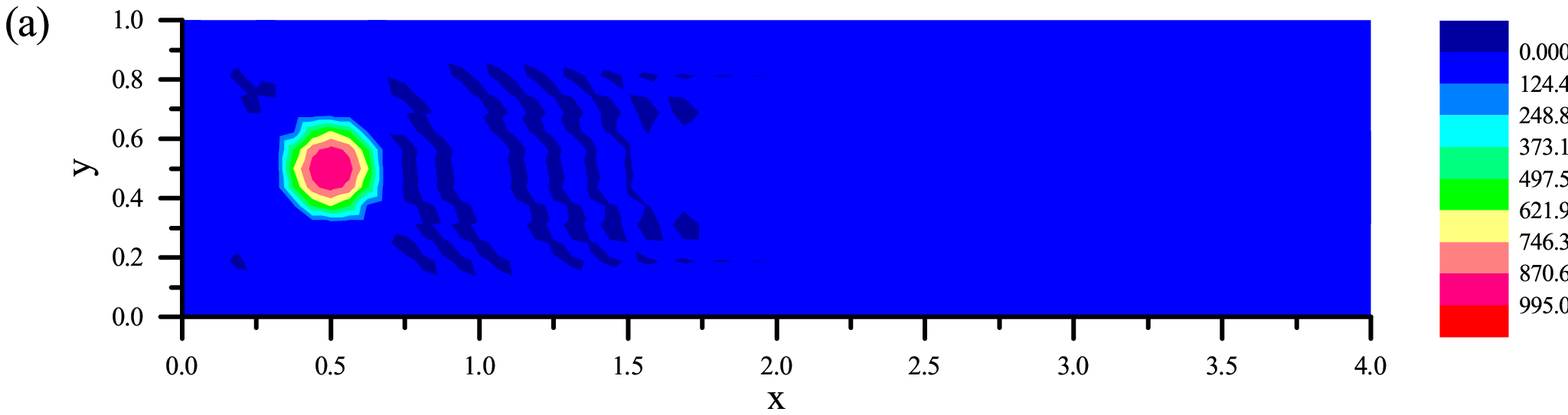}\\
\end{minipage}
\begin{minipage}[t]{0.9\linewidth}
\includegraphics[width=6.0in]{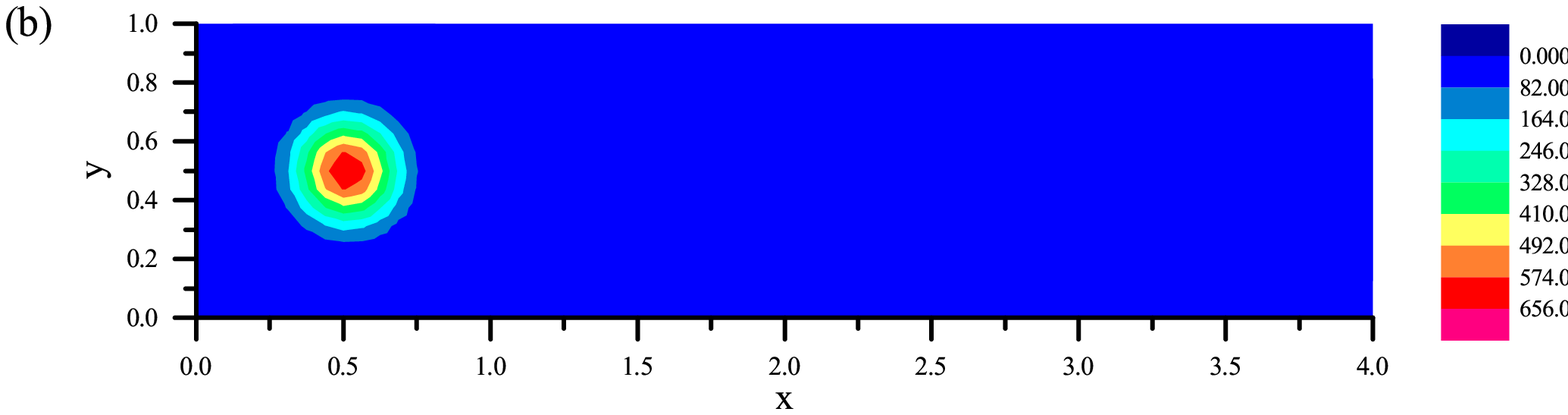}\\
\end{minipage}
\begin{minipage}[t]{0.9\linewidth}
\includegraphics[width=6.0in]{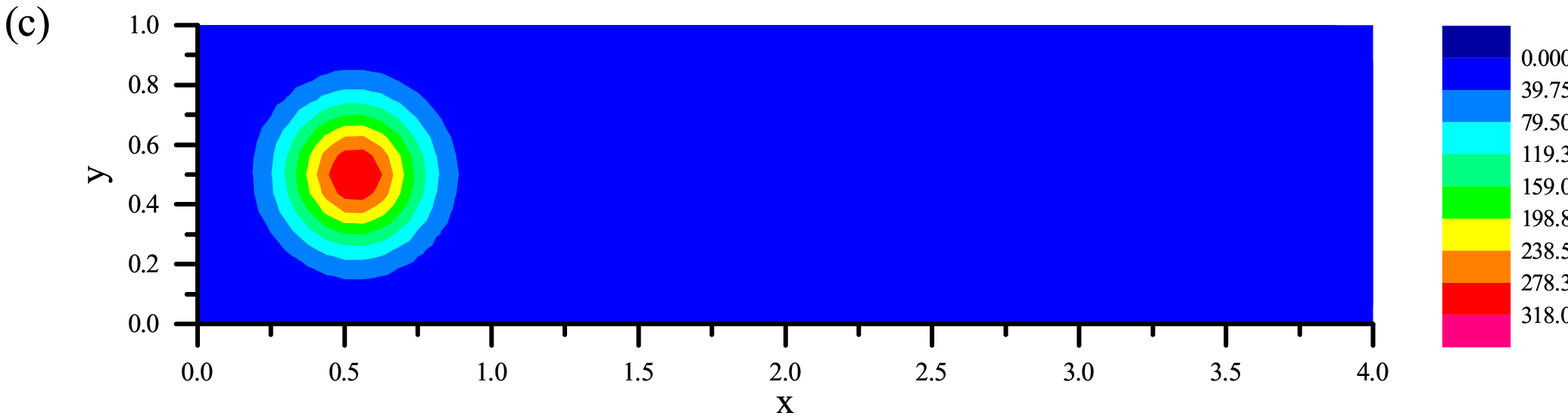}\\
\end{minipage}
\caption{The contour plots of of $u(x,y,t)$ at $t=0.02$, $1$ and $3$ when $\alpha=\beta=1.9$.}\label{figns1}
\end{figure*}

\begin{figure*}[!htb]
\begin{minipage}[t]{0.9\linewidth}
\includegraphics[width=6.0in]{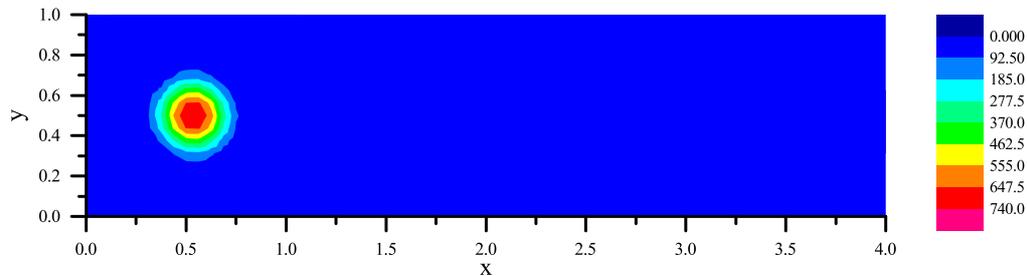}
\end{minipage}
\caption{The contour plot of $u(x,y,t)$ at $t=3$ when $\alpha=\beta=1.3$.}\label{figns2}
\end{figure*}

}
\section{Conclusion} \label{s6}
\indent

{\color{red}
This investigation has considered} an advanced DQ technique for the numerical solutions of the
multi-term TSFPDEs {\color{blue}on two- and three-dimensional convex domains}.
The time discretization was basically done by high-order difference schemes and the spatial
discretization was done by using a class of DQ formulas to approximate the fractional derivatives with the help
of some RBFs as test functions.
The unconditional stability and convergent analysis have been established for the time semi-discrete scheme.
In numerical study, we applied the proposed DQ method to several typical problems.
It produces very satisfactory accuracy both in time and space. The outcomes and comparisons with the true solutions or
the errors yielded by the other methods like FE method illustrate that our method is convergent
with theoretical orders and effective or even more favorable {\color{red}on aspect of} computational accuracy.

{\color{red}On the other hand, it is a common knowledge that
the condition number of RBFs interpolation matrix would be quite large as the nodal number increases.
This would be particularly evident when we apply it to large-scale real problems and this limitation can
affect the computational stability of our method in space.
Consequently, in our future researches, we will consider the local RBFs as test functions to further improve this algorithm
and extend it to some nonlinear physical problems with profound application background,
such as fractional Schr\"{o}dinger or Ginzburg-Landau equations.}\\

\begin{acknowledgements}
{\color{red}The authors thank the anonymous referees and editor for their comments and suggestions which have improved this work.}
This research was supported by the Natural Science Foundation of Hunan Province of China (Nos. 2020JJ5514, 2020JJ4554),
the Scientific Research Funds of Hunan Provincial Education Department (Nos. 19C1643, 19B509)
and  National Natural Science Foundation of China (No. 11971386).
\end{acknowledgements}
\small
\textbf{Compliance with ethical standards}\\

\noindent
\small
\textbf{Conflict of interest} The authors declare that they have no conflict of interest.


\end{document}